\tikzstyle directed=[postaction={decorate,decoration={markings,
    mark=at position #1 with {\arrow{>}}}}]
\tikzstyle rdirected=[postaction={decorate,decoration={markings,
    mark=at position #1 with {\arrow{<}}}}]
\definecolor{orchid}{RGB}{143,40,194}
\definecolor{lava}{RGB}{207,16,32}
\def\cal#1{\mathcal{#1}}%
\newcommand{\sll}[1]{\mathfrak{sl}_{#1}}
\newcommand{\gll}[1]{\mathfrak{gl}_{#1}}
\newcommand{\soo}[1]{\mathfrak{so}_{#1}}
\newcommand{\fg}{\mathfrak{g}}
\newcommand{\fb}{\mathfrak{b}}
\newcommand{\fp}{\mathfrak{p}}
\newcommand{\fh}{\mathfrak{h}^*}
\newcommand{\VW}{\xy(0,0)*{\bigvee_d};(-1,0)*{\bigvee_{\phantom{d}}};\endxy}
\newcommand{\Uu}{\boldsymbol{\mathrm{U}}}
\newcommand{\Uv}{\boldsymbol{\mathrm{U}}_v}
\DeclareRobustCommand{\Uq}{\boldsymbol{\mathrm{U}}_q}
\newcommand{\Endr}{\mathrm{End}_{\Uq(\fg)}(T)}
\DeclareRobustCommand{\Endrr}{\mathrm{End}_{\Uq}(T)}
\DeclareRobustCommand{\Mod}[1]{{#1}\text{-}\boldsymbol{\mathrm{Mod}}}
\newcommand{\Hom}{\mathrm{Hom}}
\newcommand{\End}{\mathrm{End}}
\newcommand{\Exti}{\mathrm{Ext}^i_{\Uq}}
\newcommand{\Extii}{\mathrm{Ext}^1_{\Uq}}
\newcommand{\Char}{\mathrm{char}}
\newcommand{\Dl}{\Delta_q(\lambda)}
\newcommand{\Nl}{\nabla_q(\lambda)}
\newcommand{\Ll}{L_q(\lambda)}
\newcommand{\Tl}{T_q(\lambda)}
\newcommand{\Dm}{\Delta_q(\mu)}
\newcommand{\Nm}{\nabla_q(\mu)}
\newcommand{\TL}{\mathrm{GL}}
\newcommand{\T}{\boldsymbol{\mathcal{T}}}
\def\C{{\mathbb C}}
\def\N{{\mathbb{Z}_{\geq 0}}}
\def\Z{{\mathbb Z}}
\def\Q{{\mathbb Q}}
\def\K{{\mathbb K}}
\def\P{\mathcal{P}}
\def\Tt{\mathcal{I}}
\def\Ttt{\mathcal{I}^{\lambda}}
\def\Ct{\mathcal{C}}
\def\I{\mathrm{i}}
\theoremstyle{definition}
\newtheorem{thm}{Theorem}[section]
\newtheorem{cor}[thm]{Corollary}
\newtheorem{lem}[thm]{Lemma}
\newtheorem{prop}[thm]{Proposition}
\newtheorem{remark}{Remark}
\declaretheorem[style=definition,name=Assumption,qed=$\blacktriangle$,numberlike=thm]{as}
\declaretheorem[style=definition,name=Remark,qed=$\blacktriangle$,numberlike=remark]{rem}
\declaretheorem[style=definition,name=Example,qed=$\blacktriangle$,numberlike=thm]{ex}
\declaretheorem[style=definition,name=Definition,qed=$\blacktriangle$,numberlike=thm]{defn}
\newtheorem*{thmm}{Theorem}
\newcommand{\makeqed}{\hfill\ensuremath{\square}}
\newcommand{\qedmake}{\hfill\ensuremath{\blacksquare}}
\let\fullref\autoref
\def\makeautorefname#1#2{\expandafter\def\csname#1autorefname\endcsname{#2}}
\begin{document}
\vbadness=10001
\hbadness=10001
\title[Cellular structures using $\Uq$-tilting modules]{Cellular structures using $\Uq$-tilting modules\\{\tiny With additional notes to the paper as an Appendix}}

\author{Henning Haahr Andersen}

\author{Catharina Stroppel}

\author{Daniel Tubbenhauer}

\thanks{H.H.A. and D.T. were partially supported by the center of excellence grant ``Centre for Quantum Geometry of Moduli Spaces (QGM)'' from the ``Danish National Research Foundation (DNRF)''. C.S. was supported by the Max--Planck-Gesellschaft. D.T. was partially supported by a research funding of the ``Deutsche Forschungsgemeinschaft (DFG)'' during the last part of this work}

\begin{abstract}
We use the theory of $\Uq$-tilting modules 
to construct cellular bases for
centralizer 
algebras. Our methods are quite general and work for any quantum group 
$\Uq$ attached 
to a Cartan matrix 
and include the non-semisimple cases for $q$ being a root of unity and ground
fields of positive characteristic.
Our approach also
generalizes to certain 
categories containing infinite-dimensional modules.
As applications, we give a new 
semisimplicty criterion for centralizer 
algebras, and recover the cellularity of several known algebras  
(with partially new cellular bases) which all fit into our general setup.
\end{abstract}

\maketitle

\vspace*{-1em}

\tableofcontents

\vspace*{-1em}
%
%
\section{Introduction}\label{sec-intro}
Fix any field $\K$ and set $\K^{\ast}=\K-\{0,-1\}$ if $\Char(\K)>2$ and 
$\K^{\ast}=\K-\{0\}$ otherwise.
Let $\Uq(\fg)$ be the quantum group over $\K$
for a fixed, arbitrary 
parameter $q\in\K^{\ast}$ associated 
to a simple Lie algebra $\fg$. 
The main result in this 
paper is the following.

\begin{thmm}(\textbf{Cellularity of endomorphism algebras.})
Let $T$ be a $\Uq(\fg)$-tilting module. Then 
$\Endr$ is a cellular algebra in the sense of Graham and Lehrer \cite{gl}.\makeqed
\end{thmm}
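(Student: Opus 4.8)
The plan is to write down an explicit cell datum for the finite-dimensional $\K$-algebra $A:=\Endr$ directly from the $\Delta$- and $\nabla$-filtrations that every $\Uq(\fg)$-tilting module carries, and then to check the Graham--Lehrer axioms of~\cite{gl}. First I would fix a decomposition $T\cong\bigoplus_{\lambda\in\Lambda}\Tl^{\oplus m_\lambda}$ into indecomposable tilting modules; here $\Lambda$ is the finite set of dominant weights $\lambda$ with $m_\lambda>0$, partially ordered by the dominance order. For $\lambda\in\Lambda$ the Hom-space computations recalled in the earlier sections give
\[
\dim_{\K}\Hom_{\Uq(\fg)}(\Dl,T)=(T:\Nl)=(T:\Dl)=\dim_{\K}\Hom_{\Uq(\fg)}(T,\Nl),
\]
the middle equality coming from the contravariant duality $\circledast$, which fixes every $\Tl$ and interchanges $\Delta$'s with $\nabla$'s. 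So I may take $M(\lambda)$ to be an index set of size $(T:\Dl)$, fix a basis $\{f^{\lambda}_{S}:S\in M(\lambda)\}$ of $\Hom_{\Uq(\fg)}(\Dl,T)$, and let $\{g^{\lambda}_{S}:S\in M(\lambda)\}$ be the basis of $\Hom_{\Uq(\fg)}(T,\Nl)$ obtained by applying $\circledast$ together with a fixed isomorphism $T^{\circledast}\cong T$ (which exists since $\Tl^{\circledast}\cong\Tl$).

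Next I would promote these to maps involving $\Tl$. From the exact sequences $0\to\Dl\to\Tl\to C_{\lambda}\to 0$ and $0\to K_{\lambda}\to\Tl\to\Nl\to 0$, the modules $C_{\lambda}$ and $K_{\lambda}$ have $\Delta$- resp.\ $\nabla$-filtrations with sections of highest weight strictly below $\lambda$, hence $\Extii(C_{\lambda},T)=0=\Extii(T,K_{\lambda})$ because $T$ is tilting; so each $f^{\lambda}_{S}$ extends to some $\tilde f^{\lambda}_{S}\colon\Tl\to T$ and each $g^{\lambda}_{S}$ lifts to some $\tilde g^{\lambda}_{S}\colon T\to\Tl$. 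Put
\[
c^{\lambda}_{S,U}:=\tilde f^{\lambda}_{S}\circ\tilde g^{\lambda}_{U}\in A,\qquad S,U\in M(\lambda),
\]
and let $A(<\lambda)\subseteq A$ be the span of all endomorphisms of $T$ that factor through a direct sum of copies of the $\Tm$ with $\mu\in\Lambda$, $\mu<\lambda$; this is visibly a two-sided ideal. The technical heart is to establish, working modulo $A(<\lambda)$: (i) $c^{\lambda}_{S,U}$ is independent of the chosen extension and lift, since any two choices differ by a map factoring through $C_{\lambda}$ or $K_{\lambda}$, hence through lower tilting modules; (ii) the elements $c^{\lambda}_{S,U}$ ($\lambda\in\Lambda$, $S,U\in M(\lambda)$) are linearly independent, and since there are $\sum_{\lambda}(T:\Dl)^{2}$ of them, matching $\dim_{\K}A=\sum_{\nu}(T:\Delta_q(\nu))^{2}$ as computed from a joint $\Delta$/$\nabla$-filtration count, they form a $\K$-basis with $A(<\lambda)=\mathrm{span}\{c^{\mu}_{S,U}:\mu<\lambda\}$; and (iii) restricting $a\circ\tilde f^{\lambda}_{S}\colon\Tl\to T$ back to $\Dl$ and writing the resulting element of $\Hom_{\Uq(\fg)}(\Dl,T)$ in the basis $\{f^{\lambda}_{S'}\}$ yields the Graham--Lehrer multiplication axiom $a\,c^{\lambda}_{S,U}\equiv\sum_{S'\in M(\lambda)}r_{a}(S',S)\,c^{\lambda}_{S',U}\ (\mathrm{mod}\ A(<\lambda))$ with $r_{a}(S',S)\in\K$ independent of $U$; taking $a=c^{\lambda}_{S',U'}$ moreover exhibits the cell bilinear form $\langle U,S'\rangle_{\lambda}\in\K$, the scalar with $g^{\lambda}_{U}\circ f^{\lambda}_{S'}=\langle U,S'\rangle_{\lambda}\cdot(\text{canonical map }\Dl\to\Nl)$, via $c^{\lambda}_{S,U}\,c^{\lambda}_{S',U'}\equiv\langle U,S'\rangle_{\lambda}\,c^{\lambda}_{S,U'}$, using $\dim_{\K}\Hom_{\Uq(\fg)}(\Dl,\Nl)=1$ and that $\tilde g^{\lambda}_{U}\circ\tilde f^{\lambda}_{S'}$ differs from $\langle U,S'\rangle_{\lambda}\cdot\mathrm{id}_{\Tl}$ by a non-isomorphism of $\Tl$ which composes on both sides into $A(<\lambda)$.

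Finally, the required anti-involution is ${}^{\star}:=\circledast$: from $T^{\circledast}\cong T$ one gets that $\varphi\mapsto\varphi^{\circledast}$ is an algebra anti-automorphism of $A$ with ${}^{\star}\circ{}^{\star}=\mathrm{id}$, and because $\circledast$ sends $\Dl$ to $\Nl$ and we chose $g^{\lambda}_{S}$ as the $\circledast$-image of $f^{\lambda}_{S}$, it interchanges the extensions with the lifts and hence $(c^{\lambda}_{S,U})^{\star}=c^{\lambda}_{U,S}$. Together with (i)--(iii) this verifies all of (C1)--(C3) of~\cite{gl}. I expect the main obstacle to be part (ii): proving that the $c^{\lambda}_{S,U}$ are genuinely linearly independent (equivalently, that they span complements matching the ideals $A(<\lambda)$), which forces one to control $\End_{\Uq(\fg)}(\Tl)$ and, more generally, $\Hom_{\Uq(\fg)}(\Tm,\Tl)$ modulo maps factoring through lower tilting modules; the root-of-unity and positive-characteristic subtleties are already absorbed into the properties of $\Uq(\fg)$-tilting modules established in the earlier sections and so cause no extra trouble here.
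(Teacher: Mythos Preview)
Your overall strategy is the paper's: build $c^{\lambda}_{S,U}$ by lifting bases of $\Hom_{\Uq}(\Dl,T)$ and $\Hom_{\Uq}(T,\Nl)$ through $\Tl$, take the contravariant duality $\cal{D}$ as the anti-involution, and verify the Graham--Lehrer axioms. There is, however, a genuine gap in your choice of poset.

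You take $\Lambda=\{\lambda\in X^{+}\mid m_{\lambda}>0\}$, the weights for which $\Tl$ actually occurs as a \emph{summand} of $T$. This is too small: the correct poset is $\P=\{\lambda\in X^{+}\mid (T:\Dl)\neq 0\}$, which in the non semi-simple situation is strictly larger. Concretely, let $\fg=\sll{2}$, let $q$ be a primitive third root of unity, and take $T=T_q(3)$. Then $(T:\Delta_q(3))=(T:\Delta_q(1))=1$, so $\dim\Endrr=1^{2}+1^{2}=2$; but $T$ is indecomposable, so your $\Lambda=\{3\}$ and your count $\sum_{\lambda\in\Lambda}(T:\Dl)^{2}=1$ misses half of the algebra. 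The weight $1$ must sit in the poset even though $T_q(1)$ is not a summand of $T$; the corresponding cell has zero bilinear form (hence contributes no simple module), but it is still a cell. In the paper's language your $\Lambda$ is $\P_{0}$ (which indexes the simple $\Endrr$-modules), not $\P$ (which indexes the cells). The same error propagates into your definition of $A(<\lambda)$: it must allow maps factoring through \emph{any} $\Tm$ with $\mu<\lambda$ and $(T:\Dm)\neq 0$, not only through those $\Tm$ that happen to be summands of $T$.

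Once $\Lambda$ is replaced by $\P$, your outline coincides with the paper's proof. For the step you correctly flag as the main obstacle, linear independence of the $c^{\lambda}_{S,U}$, the paper does not argue by a bare dimension count. It first shows (Lemma~\ref{lem-cell1}) that the restrictions $(c^{\lambda}_{ij})_{\lambda}$ to the $\lambda$-weight space are already linearly independent in $\Hom_{\K}(T_{\lambda},T_{\lambda})$, and then (Lemma~\ref{lem-cell3}) that $\{c^{\mu}_{ij}\mid \mu\leq\lambda\}$ is a basis of the two-sided ideal $\Endrr^{\leq\lambda}=\{\varphi\mid \varphi_{\mu}=0\text{ unless }\mu\leq\lambda\}$. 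This weight-space description of the ideal is the concrete handle that makes your item (ii) go through, and it also identifies (the corrected version of) your $A(<\lambda)$ with the paper's $\Endrr^{<\lambda}$.
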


It is important to note 
that cellular bases are not unique. In 
particular, a single algebra can have many 
cellular bases. As a concrete application, see 
\fullref{sub-graded}, we construct 
(several) new cellular bases for 
the Temperley--Lieb algebra depending on the ground field 
and the choice of deformation parameter. These bases differ therefore for 
instance from the construction in \cite[Section 6]{gl} of cellular 
bases for the Temperley--Lieb algebras. 
Moreover, we also show that some of our bases for the Temperley--Lieb 
algebra can be equipped with a $\Z$-grading which is in contrast 
to Graham and Lehrer's bases. Our bases also depend heavily 
on the characteristic of $\K$ (and on $q\in\K^{\ast}$). Hence, they see more of the 
characteristic (and parameter) depended representation theory,
but are also more difficult to construct explicitly.

We stress that the cellularity
itself can be deduced from general theory. Namely, any $\Uq(\fg)$-tilting module $T$ 
is a summand of a full $\Uq(\fg)$-tilting module $\tilde T$. 
By \cite[Theorem 6]{ring} 
$\End_{\Uq(\fg)}(\tilde T)$ is quasi-hereditary 
and comes equipped with an involution 
as we explain in \fullref{sub-celltilting}.
Thus, it is cellular, see \cite{kx}. 
By their Theorem 4.3, 
this induces the cellularity of the idempotent truncation $\Endr$.
In contrast, our approach provides the existence and a method 
of construction of many cellular bases. It generalizes to the 
infinite-dimensional Lie theory situation and has other nice consequences 
that we will explore in this paper.
In particular, our results give a novel semisimplicity 
criterion for $\Endr$, see \fullref{thm-cellsemisimple}. This 
together with the Jantzen 
sum formula give rise to a new way to obtain semisimplicity 
criteria for these algebras (we explain and explore this 
in \cite{ast} where we recover 
semisimplicity criteria for several algebras using the results of this paper).
Here a crucial fact is that the tensor 
product of $\Uq$-tilting modules is again a $\Uq$-tilting module, see \cite{par}. 
This implies that our results 
also vastly generalize \cite{west1} to 
the non-semisimple cases (where our main theorem is non-trivial).

\subsubsection*{The framework}\label{sub-intropart1}

Given any simple, complex Lie algebra 
$\fg$, 
we can assign to it a \textit{quantum deformation} $\Uv=\Uv(\fg)$ of its 
universal enveloping algebra 
by deforming 
its Serre presentation. (Here $v$ is a generic parameter and $\Uv$ is an $\Q(v)$-algebra.)
The representation theory of $\Uv$ shares many 
similarities with the one of $\fg$. In particular, the category\footnote{For 
any algebra $A$ we denote by $\Mod{A}$ the category of 
finite-dimensional, left $A$-modules. If not stated otherwise, all modules 
are assumed to be finite-dimensional, left modules.} $\Mod{\Uv}$ is semisimple.

But one can spice up the story drastically: the quantum group 
$\Uq=\Uq(\fg)$ is obtained by specializing 
$v$ to an arbitrary $q\in\K^{\ast}$. 
In particular, we 
can take $q$ to be a root of unity\footnote{In our terminology: The two cases 
$q=\pm 1$ are special and do not count as roots of unity. 
Moreover, for technical reasons, we always exclude $q=-1$ in case $\Char(\K)>2$.}. 
In this case $\Mod{\Uq}$ is not semisimple anymore, which makes 
the representation theory much more interesting. It has 
many connections and applications in different directions, e.g. 
the category has a neat combinatorics, is related to the corresponding 
almost-simple, simply connected algebraic group $G$ over $\K$ 
with $\Char(\K)$ prime, see for example \cite{ajs} or \cite{lu1}, 
to the representation theory of affine Kac--Moody algebras, 
see \cite{kalu} or \cite{tani},
and to $(2{+}1)$-TQFT's and the Witten--Reshetikhin--Turaev 
invariants of $3$-manifolds, see for example \cite{tur}.

Semisimplicity in light of our main result means the following. If we 
take $\K=\C$ and $q=\pm 1$, then our result says that
the algebra 
$\Endrr$ is cellular for any 
$\Uq$-module $T\in\Mod{\Uq}$ because in this case all $\Uq$-modules are 
$\Uq$-tilting modules. 
This is no surprise: when $T$ is a direct sum of simple $\Uq$-modules, then 
$\Endrr$ is a direct sum of matrix algebras $M_n(\K)$.
Likewise, for any $\K$, if $q\in\K^{\ast}-\{1\}$ is not a root of unity, then $\Mod{\Uq}$ 
is still semisimple and our result is (almost) standard. 
But even in the semisimple case we can say more: we get 
an Artin--Wedderburn basis
as a cellular basis for $\Endrr$, i.e. a 
basis realizing the decomposition of $\Endrr$ into its matrix components, 
see \fullref{sub-examples}.

On the other hand, if $q=1$ and $\Char(\K)>0$ or if $q\in\K^{\ast}$ is a root of unity, 
then $\Mod{\Uq}$ is far from being semisimple and our result 
gives many interesting cellular algebras.

For example, if $G=\mathrm{GL}(V)$ for some $n$-dimensional 
$\K$-vector space $V$, then $T=V^{\otimes d}$ is a $G$-tilting module 
for any $d\in\Z_{\geq 0}$. By 
Schur--Weyl duality we have
\begin{equation}\label{eq-schurweyl}
\Phi_{\mathrm{SW}}\colon\K[S_d]\twoheadrightarrow\End_G(T)\quad\text{and}
\quad\Phi_{\mathrm{SW}}\colon\K[S_d]\xrightarrow{\cong}\End_G(T),\text{ if }n\geq d,
\end{equation}
where $\K[S_d]$ is the group algebra of the symmetric group $S_d$ in $d$ letters. We can 
realize this as a special case in our framework by taking $q=1$, $n\geq d$ 
and $\fg=\mathfrak{gl}_n$ (although $\mathfrak{gl}_n$ is 
not a simple, complex Lie algebra, our approach works fine for it as well). On 
the other hand, by taking $q$ 
arbitrary in $\K^{\ast}-\{1\}$ and $n\geq d$, the 
group algebra $\K[S_d]$ is replaced by the type $A_{d-1}$ \textit{Iwahori--Hecke algebra} 
$\cal{H}_d(q)$ over $\K$ and 
our theorem gives cellular bases for this algebra 
as well. 
Note that one underlying fact 
why \eqref{eq-schurweyl} stays true in the non-semisimple 
case is that 
$\dim(\End_G(T))$ is independent of the 
characteristic of $\K$ 
(and of the parameter $q$ in the quantum case), since $T$ is a $G$-tilting module.

Of course, both $\K[S_d]$ and $\cal{H}_d(q)$ are known to be cellular (these cases were 
one of the main motivations of Graham and Lehrer to introduce the notion of cellular algebras), 
but the point we want to make is, 
that they fit into our more general framework.

The following known cellularity properties 
can also be recovered directly from our approach. 
And moreover: in most of the examples we either have 
no or only some mild restrictions on $\K$ and $q\in\K^{\ast}$.

\vspace{-0.5ex}
\begin{itemize}
\item As sketched above: the algebras 
$\K[S_d]$ and $\cal{H}_d(q)$ and their quotients under $\Phi_{\mathrm{SW}}$.
\item The \textit{Temperley--Lieb algebras} $\cal{TL}_d(\delta)$ 
introduced in \cite{tl}.
\item Other less well-known endomorphism algebras for $\sll{2}$-related 
tilting modules appearing in more 
recent work, e.g. \cite{alz}, \cite{at} or \cite{rt}.
\item \textit{Spider algebras} in the sense of \cite{kup}.
\item Quotients of the group algebras 
of $\Z/r\Z\wr S_d$ and its quantum version $\cal{H}_{d,r}(q)$, 
the \textit{Ariki--Koike algebras} introduced in \cite{ak}. 
This includes the Ariki--Koike algebras 
themselves and thus, the Hecke algebras of type $B$. This 
also includes 
Martin and Saleur's \textit{blob algebras} $\cal{BL}_d(q,m)$ \cite{ms} 
and \textit{(quantized) rook monoid algebras} (also called \textit{Solomon algebras}) 
$\cal{R}_d(q)$ in the spirit of \cite{sol}. 
\item \textit{Brauer algebras} 
$\cal{B}_d(\delta)$ introduced in \cite{bra} 
in the context of classical 
invariant theory, and related algebras, 
e.g. the \textit{walled 
Brauer algebras} $\cal{B}_{r,s}(\delta)$ as in \cite{koi} and \cite{tur1}, 
and the \textit{Birman--Murakami--Wenzl} algebras 
$\cal{BMW}_d(\delta)$, in the sense of \cite{bw} and \cite{mu}.
\end{itemize}
\vspace{-0.5ex}

Note our methods also apply for some categories 
containing infinite-dimensional modules. 
For example, with a little bit more care, one could allow $T$ to be 
a not necessarily finite-dimensional $\Uq$-tilting module.
Moreover, our methods also 
include the \textit{BGG category} $\boldsymbol{\cal{O}}$, 
its \textit{parabolic subcategories} $\boldsymbol{\cal{O}}^{\fp}$
and its quantum cousin $\boldsymbol{\cal{O}}_q$ from \cite{am}. For example, 
using the ``big projective tilting'' 
in the principal block, we get a cellular basis for 
the coinvariant algebra of the Weyl group associated to $\fg$. In fact, we get 
a vast generalization of this, e.g. we can fit 
\textit{generalized Khovanov arc algebras} 
(see e.g. \cite{bs1}), $\sll{n}$\textit{-web algebras} (see e.g. \cite{mpt}), 
\textit{cyclotomic Khovanov--Lauda and Rouquier algebras} of type $A$
(see \cite{kl1} and \cite{kl3} or \cite{rou}), 
for which we obtain 
cellularity via the connection 
to cyclotomic quotients of the degenerate 
affine Hecke algebra, see \cite{bk1},
cyclotomic $\VW$\textit{-algebras} 
(see e.g. \cite{es}) and \textit{cyclotomic quotients of 
affine Hecke algebras} $\boldsymbol{\mathrm{H}}^s_{\K,d}$ 
(see e.g. \cite{rsvv}) into our framework as well, see 
\fullref{sub-examples}.
However, we will for simplicity focus on the finite-dimensional world.
Here we provide all necessary arguments in great detail, 
sometimes, for brevity, only in an extra file \cite{astproofs}. 
See also \fullref{remark:qher}.

Following Graham and Lehrer's approach, our cellular bases for $\Endrr$ provide also 
$\Endrr$-cell modules, the classification of simple $\Endrr$-modules etc. We give 
an interpretation of this in our setting as well, see 
\fullref{sec-endoringsrep}. For instance, we 
deduce a new criterion for semisimplicity of $\Endrr$, see \fullref{thm-cellsemisimple}.

\begin{rem}\label{remark:qher}
Instead of working with the infinite-dimensional algebra $\Uq$, 
we could also work with a finite-dimensional, quasi-hereditary 
algebra (with a suitable anti-involution). By using 
results summarized in \cite[Appendix]{don1}, our constructions will 
go through very much in the same spirit as for $\Uq$. However, 
using $\Uq$ has some advantages.
For example, we can construct an abundance 
of cellular bases 
(for the explicit construction of
our basis we need ``weight spaces'' such 
that e.g. \eqref{eq-uniquemap} or \fullref{lem-cell1} work). 
Having several cellular bases 
is certainly an advantage, although 
calculating these is in general a non-trivial 
task. (For example, getting an explicit understanding of the 
endomorphisms giving rise to the cellular basis 
is a tough challenge, but see \cite{rw} for 
some crucial steps in this direction.)
As a direct consequence of the existence of many 
cellular bases: most of the algebras 
appearing in our list of examples above 
can be additionally equipped with a $\Z$-grading. The basis 
elements from \fullref{thm-cell} can be chosen 
such that our approach leads to a 
$\Z$-graded cellular basis in the sense of \cite{hm}. 
We make this more precise in case of 
the Temperley--Lieb algebras, 
but one could for instance also recover 
the $\Z$-graded cellular bases of the 
Brauer algebras from \cite{ehst} 
from our approach. 
We stress that in both cases the 
cellular bases in \cite[Sections 4 and 6]{gl} 
are not $\Z$-graded.
To keep the paper within reasonable 
boundaries, we do not treat the graded setup 
in detail.
\end{rem}
\vspace*{.5cm}
\paragraph*{\textbf{Acknowledgements}} We would like to thank Ben Cooper, 
Michael Ehrig, Matt Hogancamp, Johannes K\"ubel, 
Gus Lehrer, Paul Martin, Andrew Mathas, Volodymyr Mazorchuk, 
Steen Ryom-Hansen 
and Paul Wedrich
for helpful comments and discussions, and the referees for further useful comments. 
H.H.A. would like to thank the Institut Mittag-Leffler for 
the hospitality
he enjoyed there during the final stages of this work. C.S. 
is very grateful to the Max-Planck Institute in Bonn for the 
extraordinary support and excellent working 
conditions. A large part of her research was worked out during 
her stay there. D.T. would like to thank the dark 
Danish winter for very successfully limiting his non-work options 
and the Australian long blacks for pushing him forward.
%
\section{Quantum groups, their representations and tilting modules}\label{sec-qreps}
We briefly recall some facts 
we need in this paper. 
Details can be found e.g. in \cite{apw} and \cite{ja}, or \cite{don1} and \cite{jarag}. 
For notations and arguments 
adopted to our situation see \cite{astproofs}. 
See also \cite{ring} and \cite{don} for 
the classical treatment of tilting modules (in the modular 
case). As in the introduction, we fix a field $\K$ 
over which we work throughout.

\subsection{The quantum group \texorpdfstring{$\Uq$}{Uq}}\label{sub-quantumgroups}

Let $\Phi$ be a finite \textit{root system} in an Euclidean 
space $E$. We fix a choice of \textit{positive roots} $\Phi^+\subset\Phi$ 
and \textit{simple roots} $\Pi\subset\Phi^+$. We assume that we have $n$ 
simple roots that we denote by $\alpha_1,\dots,\alpha_n$. 
For each $\alpha\in\Phi$, we denote by 
$\alpha^{\vee}\in\Phi^{\vee}$ the corresponding \textit{coroot}.
Then $\mathbf{A}=(\langle\alpha_i,\alpha_j^{\vee}\rangle)_{i,j=1}^n$ is 
called the \textit{Cartan matrix}.

By the set of \textit{(integral) weights} 
we mean 
$X=\{\lambda \in E\mid \langle\lambda,\alpha_i^{\vee}\rangle\in\Z\text{ for all }\alpha_i\in\Pi\}$. The 
\textit{dominant (integral) weights} $X^+$ 
are those $\lambda\in X$ such that 
$\langle\lambda,\alpha_i^{\vee}\rangle\geq 0$ for all $\alpha_i\in\Pi$.

Recall that there is a \textit{partial ordering} on $X$ given by $\mu\leq \lambda$ 
if and only if $\lambda-\mu$ is an $\Z_{\geq 0}$-valued linear combination of the simple roots, that is, 
$\lambda-\mu=\sum_{i=1}^na_i\alpha_i$ with $a_i\in\Z_{\geq 0}$.

We denote by $\Uq=\Uq(\mathbf{A})$ 
the \textit{quantum enveloping algebra} 
attached to a Cartan matrix $\mathbf{A}$ and specialized at $q\in\K^{\ast}$, 
where we follow \cite{apw} with our conventions. 
Note $\Uq$ always means 
the quantum group over $\K$ defined via Lusztig's divided power 
construction. (Thus, 
we have generators $K_i,E_i$ and $F_i$ for all 
$i=1,\dots,n$ as well as divided power generators.) We have a
decomposition $\Uq=\Uq^-\Uq^0\Uq^+$, with subalgebras 
generated by $F$'s, $K$'s and $E$'s respectively 
(and some divided power generators, see e.g. their Section 1).
Note we can recover the generic 
case $\Uv=\Uv(\mathbf{A})$ by choosing $\K=\Q(v)$ and $q=v$.

It is worth noting that $\Uq$ is a Hopf algebra, so its module 
category is a monoidal category with duals. 
We denote by $\Mod{\Uq}$ the category 
of finite-dimensional $\Uq$-modules (of type $1$, see \cite[Section 1.4]{apw}). 
We consider only such $\Uq$-modules in what follows.

Recall that there 
is a contravariant, character-preserving \textit{duality functor}
$\cal{D}$ that is defined on the $\K$-vector space level 
via $\cal{D}(M)=M^*$ (the $\K$-linear dual 
of $M$) and an action of $\Uq$ on $\cal{D}(M)$ is defined 
as follows. Let $\omega\colon\Uq\to\Uq$ be the automorphism 
of $\Uq$ which interchanges $E_i$ and $F_i$ and interchanges $K_i$ and 
$K_i^{-1}$ (see e.g. \cite[Lemma 4.6]{ja}, which extends 
to our setup without difficulties). Then define
$uf=m\mapsto f(\omega(S(u))m)$
for $u\in \Uq,f\in \cal{D}(M),m\in M$.
Given any $\Uq$-homomorphism $f$ between $\Uq$-modules, we also write 
$\mathrm{i}(f)=\cal{D}(f)$. This duality gives rise to the involution 
in our cellular datum from \fullref{sub-celltilting}.

\begin{as}\label{as-tech}
If $q$ is a root of unity, then, to 
avoid technicalities, we assume 
that $q$ is a primitive root of unity of odd order $l$. 
A treatment of the even case, 
that can be used to repeat everything in this paper in the case where $l$ is even, 
can be found in \cite{an2}.
Moreover, in case of type $G_2$ we additionally assume that $l$ is prime to $3$.
\end{as}

For each $\lambda\in X^+$ there is 
a \textit{Weyl} $\Uq$\textit{-module} $\Dl$ and a \textit{dual Weyl} 
$\Uq$\textit{-module} $\Nl$ satisfying $\cal{D}(\Dl)\cong\Nl$.
The $\Uq$-module $\Dl$ has a unique simple head $\Ll$ 
which is the unique simple socle of $\Nl$.
Thus, there is a (up to scalars) 
unique $\Uq$-homomorphism
\begin{equation}\label{eq-uniquemap}
c^{\lambda}\colon \Dl\to \Nl\quad(\text{mapping head to socle}).
\end{equation}
This relies on the 
fact that $\Dl$ and $\Nl$ both 
have one-dimensional $\lambda$-weight spaces. The 
same fact implies that $\End_{\Uq}(\Ll)\cong\K$ for all $\lambda\in X^+$, 
see \cite[Corollary 7.4]{apw}. This last property 
fails for quasi-hereditary 
algebras in general when $\K$ is not algebraically closed.

\begin{thm}(\textbf{Ext-vanishing.})\label{thm-extfunctor}
We have for all $\lambda,\mu\in X^+$ that
\[
\Exti(\Dl,\Nm)\cong\begin{cases}\K c^{\lambda},&\text{if }i=0\text{ and }\lambda=\mu,\\
0,&\text{else}.\end{cases}\hspace{3.3cm}
\raisebox{-0.3cm}{\makeqed}
\hspace{-3.3cm}
\]
\end{thm}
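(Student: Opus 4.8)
The plan is to establish the Ext-vanishing by combining Frobenius reciprocity with the "$q$-Kempf vanishing" input already quoted in the text, via the standard long exact sequence / dimension-shift argument. First I would recall that, by the $q$-version of Frobenius reciprocity, $\nabla_q(\mu)=\mathrm{Ind}_{\Uq^-\Uq^0}^{\Uq}\K_\mu$ and hence $\mathrm{Ind}$ is right adjoint to the restriction functor; moreover this induction functor is exact on the relevant category once $q$-Kempf vanishing tells us that the higher derived functors $R^i\mathrm{Ind}$ vanish on one-dimensional weight modules $\K_\mu$ (this is precisely the content referenced via \cite[Theorem~5.5]{rh2} and is what makes $\mathrm{ch}(\nabla_q(\mu))$ the Weyl character). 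With exactness of $\mathrm{Ind}$ in hand, the Grothendieck spectral sequence collapses and yields the key identity
\[
\Exti(\Dl,\Nm)\cong\Exti[\Uq^-\Uq^0](\Dl,\K_\mu),
\]
valid for all $i\geq 0$ and all $\lambda,\mu\in X^+$ (here, if $\Char(\K)>0$, all $\Ext$-groups are computed in the enlarged category with enough injectives, as flagged in the paragraph preceding the theorem).

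Next I would analyze the right-hand side. The algebra $\Uq^-\Uq^0$ acts on $\Dl$ and on $\K_\mu$ through weight spaces, and $\Uq^0$ is "diagonalizable", so computing $\Exti[\Uq^-\Uq^0](-,\K_\mu)$ reduces, after decomposing into weight components, to an $\Ext$-computation over $\Uq^-$ (or rather over the subalgebra generated by the $F_i^{(r)}$'s) with a weight condition picking out the $\mu$-component. Since $\Dl$ is generated by its highest weight vector, $\dim\Dl_\lambda=1$, and all other weights $\nu$ of $\Dl$ satisfy $\nu<\lambda$, the module $\Dl$ as a $\Uq^-$-module looks like $\Uq^-\cdot v_\lambda$, a cyclic module on which $\Uq^-$ acts "freely enough" in low homological degrees. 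The cleanest way to see the vanishing is to note that $\Dl$ has a free resolution over $\Uq^-\Uq^0$ (or use that $\Dl=\Uq\otimes_{\Uq^{\geq 0}}\K_\lambda$ is the Weyl module presented by induction from the opposite Borel, hence is "relatively projective"); then $\Exti[\Uq^-\Uq^0](\Dl,\K_\mu)$ is concentrated in degree $i=0$, where it equals $\Hom_{\Uq^-\Uq^0}(\Dl,\K_\mu)$. This $\Hom$ space is spanned by maps sending $v_\lambda$ into $(\K_\mu)_\lambda$, which is nonzero only when $\lambda=\mu$ (as $\K_\mu$ is concentrated in weight $\mu$), and then it is one-dimensional. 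Composing the resulting $\Hom_{\Uq}(\Dl,\Nm)\cong\K$ back through Frobenius reciprocity identifies the generator with the canonical map $c^\lambda$ of \eqref{eq-uniquemap}, giving the claimed formula.

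The main obstacle I anticipate is the projectivity/acyclicity input on the Borel side: making precise that $\Dl$ is acyclic for $\Hom_{\Uq^-\Uq^0}(-,\K_\mu)$ in positive degrees, uniformly in characteristic and at a root of unity, where one must work in the enlarged module category and be careful about which "Borel" ($\Uq^{\geq 0}$ versus $\Uq^-\Uq^0$) appears and how the divided-power generators interact with the triangular decomposition. A safe route, and the one I would ultimately take, is to bypass an explicit resolution and instead invoke the standard transitivity of induction plus $q$-Kempf vanishing to reduce the whole $\Ext$ to the $i=0$ case directly, after which only the elementary weight-space bookkeeping above remains; this is essentially the argument of \cite[Section~2]{apw} adapted to divided powers, and the remaining verifications are routine.
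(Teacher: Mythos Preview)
The paper does not actually give a proof here; it only says the argument is ``similar to the modular analogon'' in Jantzen and refers to a companion file. Your outline --- reduce via $q$-Kempf vanishing and the collapsed Grothendieck spectral sequence to $\Ext^i_{\Uq^-\Uq^0}(\Dl,\K_\mu)$, then analyze the Borel side --- is exactly the standard route taken in Jantzen and in \cite{apw}, so in spirit you are aligned with what the paper has in mind.

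There is, however, a genuine error in your Borel-side analysis. You write that $\Dl=\Uq\otimes_{\Uq^{\geq 0}}\K_\lambda$ and conclude that it is free over $\Uq^-$ or ``relatively projective''. This is false: the module $\Uq\otimes_{\Uq^{\geq 0}}\K_\lambda$ is the (infinite-dimensional) Verma module, whereas the paper's $\Dl$ is by definition $\cal{D}(\Nl)$, a \emph{finite-dimensional} quotient of the Verma. In particular $\Dl$ is not free over $\Uq^-$, and the vanishing of $\Ext^{>0}_{\Uq^-\Uq^0}(\Dl,\K_\mu)$ cannot be read off from any naive projectivity statement. Your final paragraph acknowledges a difficulty here but the proposed fix (``transitivity of induction plus $q$-Kempf vanishing'') is too vague to close the gap: transitivity and Kempf together give you the reduction to the Borel, which you already have; they do not by themselves compute the Borel Ext.

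The way this is actually handled (in Jantzen and in the quantum adaptation) is not via projectivity of $\Dl$ but by a second application of the induction machinery on the \emph{other} side: one uses that $\Dl$ is dual to an induced module from the opposite Borel $\Uq^0\Uq^+$, together with the corresponding Kempf-type vanishing there, or equivalently one runs an induction on $\lambda$ in the dominance order using the long exact sequence attached to $0\to\ker\to\Dl\to L_q(\lambda)\to 0$ and the already-established cases. Either way, the missing ingredient is a genuine cohomological input on the $\Dl$ side, not just a freeness claim.
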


We have to 
enlarge the category $\Mod{\Uq}$ by non-necessarily finite-dimensional 
$\Uq$-modules to have enough injectives such that the 
$\Exti$-functors make sense by using $q$-analogs 
arguments as in \cite[Part I, Chapter 3]{jarag}. However, 
$\Mod{\Uq}$ 
has enough injectives in characteristic zero, 
see \cite[Proposition 5.8]{an1} for a treatment of the non-semisimple cases.

\begin{proof}
Similar to the modular analog treated 
in \cite[Proposition II.4.13]{jarag} 
(a proof in our notation can be found in \cite{astproofs}).
\end{proof}

\subsection{Tilting modules and Ext-vanishing}\label{sub-tilting}

We say that a $\Uq$-module $M$ has a $\Delta_q$\textit{-filtration} 
if there exists some $k\in\N$ and 
a finite descending sequence of $\Uq$-submodules
\[
M=M_{0}\supset M_1\supset\cdots\supset M_{k^{\prime}}\supset\dots\supset M_{k-1}\supset M_k=0,
\]
such that $M_{k^{\prime}}/M_{k^{\prime}+1}\cong \Delta_q(\lambda_{k^{\prime}})$ 
for all $k^{\prime}=0,\dots,k-1$ and some $\lambda_{k^{\prime}}\in X^+$.
A $\nabla_q$\textit{-filtration} is defined similarly, 
but using a finite ascending sequence of $\Uq$-submodules 
and $\nabla_q(\lambda)$'s instead of $\Delta_q(\lambda)$'s. 
We denote by $(M:\Dl)$ and $(N:\Nl)$ the corresponding multiplicities, 
which are well-defined by \fullref{cor-extfunctor}.
Note that a $\Uq$-module $M$ has a $\Delta_q$-filtration if and only if its dual $\cal{D}(M)$ has a 
$\nabla_q$-filtration.

A corollary of the Ext-vanishing theorem is the following, 
whose proof is left to the reader 
or can be found in \cite{astproofs}. (Note that 
the proof of \fullref{cor-extfunctor} 
therein gives, in principle, 
a method to find and construct bases of $\Hom_{\Uq}(M,\Nl)$ and 
$\Hom_{\Uq}(\Dl,N)$ respectively.)

\begin{cor}\label{cor-extfunctor}
Let $M,N\in\Mod{\Uq}$ and $\lambda\in X^+$. Assume that 
$M$ has a $\Delta_q$-filtration and $N$ has a $\nabla_q$-filtration. Then
\[
\dim(\Hom_{\Uq}(M,\Nl))=(M:\Dl)
\quad\text{and}\quad
\dim(\Hom_{\Uq}(\Dl,N))=(N:\Nl).
\]
In particular, $(M:\Dl)$ 
and $(N:\Nl)$ are independent 
of the choice of filtrations.\qedmake
\end{cor}

\begin{prop}(\textbf{Donkin's Ext-criteria.})\label{prop-extfunctor2}
The following are equivalent.

\begin{enumerate}[label=(\alph*)]
\item An $M\in\Mod{\Uq}$ has a $\Delta_q$-filtration
(respectively $N\in\Mod{\Uq}$ has a $\nabla_q$-filtration).
\item We have $\Exti(M,\Nl)=0$ 
(respectively $\Exti(\Dl,N)=0$)
for all $\lambda\in X^+$ and all $i>0$.
\item We have $\Extii(M,\Nl)=0$ 
(respectively $\Extii(\Dl,N)=0$)
for all $\lambda\in X^+$.\makeqed
\end{enumerate}
\end{prop}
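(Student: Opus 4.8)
The plan is to run the cycle of implications $(a)\Rightarrow(b)\Rightarrow(c)\Rightarrow(a)$ for the $\Delta_q$-version of the statement, and then to obtain the parenthetical $\nabla_q$-version for $N$ for free via the duality functor $\cal{D}$ of~\eqref{eq-duality}: since $\cal{D}$ is an exact contravariant auto-equivalence with $\cal{D}^2\cong\mathrm{id}$, interchanges $\Delta_q$- and $\nabla_q$-filtrations, and satisfies $\cal{D}(\Dl)\cong\Nl$, one gets $\Exti(\Dl,N)\cong\Exti(\cal{D}N,\Nl)$, so the three $\nabla_q$-assertions for $N$ are literally the three $\Delta_q$-assertions for $\cal{D}N$.

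For $(a)\Rightarrow(b)$ I would induct on the length $k$ of a $\Delta_q$-filtration of $M$. When $k=1$ we have $M\cong\Delta_q(\nu)$ and the vanishing is exactly Theorem~\ref{thm-extfunctor}. When $k>1$, pick the bottom step $0\to M'\to M\to\Delta_q(\nu)\to 0$ with $M'$ of filtration length $k-1$; applying $\Hom_{\Uq}(-,\Nl)$ gives a long exact sequence in which $\Exti(M,\Nl)$ is caught between $\Exti(\Delta_q(\nu),\Nl)$ (zero by Theorem~\ref{thm-extfunctor} for $i>0$) and $\Exti(M',\Nl)$ (zero by the inductive hypothesis), hence vanishes. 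The implication $(b)\Rightarrow(c)$ is trivial, being the special case $i=1$.

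The substance is $(c)\Rightarrow(a)$, and here I would run Donkin's argument~\cite{don} adapted to $\Uq$, by induction on $\dim_\K M$, the case $M=0$ being vacuous. For $M\neq 0$ choose a weight $\mu$ of $M$ maximal with respect to $\leq$; a maximal weight of a finite-dimensional $\Uq$-module is automatically dominant (seen from the $\sll{2}$-theory for each pair $E_i,F_i$), so $\mu\in X^+$. Maximality of $\mu$ makes every vector of $M_\mu$ a highest weight vector, so Frobenius reciprocity identifies $\Hom_{\Uq}(M,\Nm)$ with $(M_\mu)^{*}$. Using this, Theorem~\ref{thm-extfunctor} and hypothesis (c), one peels off $m:=\dim M_\mu$ copies of $\Delta_q(\mu)$ from the top of a prospective $\Delta_q$-filtration, i.e.\ one constructs a short exact sequence
\[
0\longrightarrow M'\longrightarrow M\longrightarrow \Delta_q(\mu)^{\oplus m}\longrightarrow 0
\]
with $M'_\mu=0$ and $\dim_\K M'<\dim_\K M$. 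One then checks that $M'$ again satisfies (c): the long exact sequence of the displayed sequence, together with $\Exti(\Delta_q(\mu)^{\oplus m},\Nl)=0$ for \emph{all} $i>0$ (Theorem~\ref{thm-extfunctor}, applied for $i=1$ and $i=2$) and $\Extii(M,\Nl)=0$, squeezes $\Extii(M',\Nl)$ between two zeros for every $\lambda\in X^+$. By the induction hypothesis $M'$ has a $\Delta_q$-filtration, and prepending $\Delta_q(\mu)^{\oplus m}$ yields one for $M$.

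The main obstacle is precisely this last construction — turning the numerical data $\dim\Hom_{\Uq}(M,\Nm)=\dim M_\mu$ into an honest surjection $M\twoheadrightarrow\Delta_q(\mu)^{\oplus m}$ whose kernel loses the weight $\mu$ and still satisfies (c). This is where condition (c) is genuinely used: for $M=\Ll$ with $\Dl\neq\Ll$ no such surjection can exist, and indeed such $M$ fail (c), so the construction must exploit the Ext-vanishing hypothesis rather than just the character of $M$. Once the short exact sequence is secured, the remainder is routine bookkeeping with long exact $\Ext$-sequences and the vanishing in Theorem~\ref{thm-extfunctor}.
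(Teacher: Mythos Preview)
Your reductions $(a)\Rightarrow(b)\Rightarrow(c)$ and the duality argument for the $\nabla_q$-half are correct, and the inductive shape of $(c)\Rightarrow(a)$ is right. The gap is that the short exact sequence $0\to M'\to M\to\Delta_q(\mu)^{\oplus m}\to 0$ with $\mu$ the \emph{maximal} weight of $M$ does not exist in general, even under (c). Take $\fg=\sll{2}$, $l=3$ and $M=T_q(3)$: this is tilting (so satisfies (c)), has maximal weight $\mu=3$ with $m=1$, yet is indecomposable self-dual with composition factors $L_q(1),L_q(3),L_q(1)$, forcing its head to be $L_q(1)$; since $\Delta_q(3)$ has head $L_q(3)$, no surjection $T_q(3)\twoheadrightarrow\Delta_q(3)$ exists. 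In a $\Delta_q$-filtration the factor of maximal highest weight sits at the \emph{bottom} as a submodule (compare $\iota^{\lambda}\colon\Dl\hookrightarrow\Tl$ in Remark~\ref{rem-new}), not at the top as a quotient. Frobenius reciprocity yields maps $M\to\Nm$, not $M\to\Dm$, and there is no mechanism turning one into the other.

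The Donkin--Jantzen argument reverses the arrow: for $\mu$ maximal, the universal property of Weyl modules produces $\varphi\colon\Delta_q(\mu)^{\oplus m}\to M$ (one summand per highest weight vector in $M_\mu$), an isomorphism on $\mu$-weight spaces, and the substance is proving $\varphi$ \emph{injective}. With $M'=\mathrm{coker}(\varphi)$ one first checks $M'$ satisfies (c); for $\lambda\neq\mu$ this is immediate, while for $\lambda=\mu$ one needs that restriction $\Hom_{\Uq}(M,\Nm)\to\Hom_{\Uq}(\Delta_q(\mu)^{\oplus m},\Nm)$ is onto (both sides have dimension $m$, and the map is determined on the $\mu$-weight space where $\varphi$ is an isomorphism). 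The full inductive hypothesis on $M'$ then gives $\mathrm{Ext}^2_{\Uq}(M',\nabla_q(\rho))=0$ for all $\rho$, and this higher vanishing --- not available from (c) alone --- is precisely what forces $\ker(\varphi)=0$ via the long exact sequences for $0\to\mathrm{im}(\varphi)\to M\to M'\to 0$ and $0\to\ker(\varphi)\to\Delta_q(\mu)^{\oplus m}\to\mathrm{im}(\varphi)\to 0$. So the induction must carry all three statements simultaneously.
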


\begin{proof}
As in \cite[Proposition II.4.16]{jarag}. 
A proof in our notation can be found in \cite{astproofs}.
\end{proof}

A $\Uq$-module $T$ which has both, a 
$\Delta_q$- and a $\nabla_q$-filtration, is called a 
$\Uq$\textit{-tilting module}.
Following Donkin \cite{don}, we are now ready to define the 
\textit{category of} $\Uq$\textit{-tilting modules} that we denote by $\T$. This 
category is our main object of study.

\begin{defn}(\textbf{Category of $\Uq$-tilting modules.})\label{defn-tiltcat}
The category $\T$ is the full subcategory 
of $\Mod{\Uq}$ whose objects are given by all $\Uq$-tilting modules.
\end{defn}

From \fullref{prop-extfunctor2} we 
obtain directly an important statement.

\begin{cor}\label{cor-tiltingext1}
Let $T\in\Mod{\Uq}$. Then
\[
T\in\T\quad
\text{if and only if}
\quad\Extii(T,\Nl)=0=\Extii(\Dl,T)\quad\text{for all }\lambda\in X^+.
\]
When $T\in\T$, the corresponding higher Ext-groups vanish as well.\qedmake
\end{cor}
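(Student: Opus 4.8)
The plan is to deduce this directly from Proposition~\ref{prop-extfunctor2} together with the definition of $\T$ as the category of $\Uq$-modules admitting both a $\Delta_q$- and a $\nabla_q$-filtration. The two displayed vanishing conditions on $T$ are nothing but the two halves of Proposition~\ref{prop-extfunctor2}: the vanishing $\Extii(T,\Nl)=0$ for all $\lambda\in X^+$ is, by the equivalence (a)$\Leftrightarrow$(c) applied with $M=T$, equivalent to $T$ having a $\Delta_q$-filtration; and the vanishing $\Extii(\Dl,T)=0$ for all $\lambda\in X^+$ is, by the ``respectively'' part of the same equivalence applied with $N=T$, equivalent to $T$ having a $\nabla_q$-filtration. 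Combining the two equivalences with Definition~\ref{defn-tiltcat} gives the stated ``iff''.

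First I would record the forward direction: if $T\in\T$, then $T$ has both a $\Delta_q$- and a $\nabla_q$-filtration, so the implication (a)$\Rightarrow$(b) of Proposition~\ref{prop-extfunctor2} gives $\Exti(T,\Nl)=0=\Exti(\Dl,T)$ for all $\lambda\in X^+$ and all $i>0$. This simultaneously yields the $\Extii$-vanishing asserted in the statement and the final sentence about the higher Ext-groups. For the converse, I would assume the two $\Extii$-vanishing conditions and invoke (c)$\Rightarrow$(a) of Proposition~\ref{prop-extfunctor2} twice --- once with $M=T$ to produce a $\Delta_q$-filtration of $T$, and once with $N=T$ to produce a $\nabla_q$-filtration --- whence $T\in\T$ by Definition~\ref{defn-tiltcat}.

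There is essentially no obstacle here, in line with the remark preceding the corollary that it follows \emph{directly} from Proposition~\ref{prop-extfunctor2}; the only point requiring care is the bookkeeping of the ``respectively'' clauses, namely matching the module occupying the first slot of $\Ext$ with the correct filtration type ($\Delta_q$-filtered modules are detected by $\Ext^{>0}_{\Uq}(-,\nabla_q(\lambda))$, $\nabla_q$-filtered modules by $\Ext^{>0}_{\Uq}(\Delta_q(\lambda),-)$), together with the observation that self-duality of $T$ under $\cal{D}$ is \emph{not} assumed, so the two vanishing conditions must be imposed separately rather than one being deduced from the other.
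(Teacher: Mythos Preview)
Your proposal is correct and matches the paper's approach exactly: the paper simply declares that the corollary follows directly from Proposition~\ref{prop-extfunctor2}, and you have spelled out precisely how (applying (a)$\Leftrightarrow$(c) with $M=T$ for the $\Delta_q$-filtration and with $N=T$ for the $\nabla_q$-filtration, then invoking Definition~\ref{defn-tiltcat}). Your closing remark that self-duality of $T$ is not yet available at this point --- so both vanishing conditions must be assumed separately --- is apt, since $\cal{D}(T)\cong T$ is only established later in Corollary~\ref{cor-tiltingext3}.
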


The 
indecomposable $\Uq$-modules in $\T$, 
that we denote by $\Tl$, are 
indexed by $\lambda\in X^+$.
The
$\Uq$-tilting module $\Tl$ is determined by the 
property that it is indecomposable with 
$\lambda$ as its unique maximal 
weight. In fact, $(\Tl:\Dl)=1$, and 
$(\Tl:\Dm)\neq 0$ only if $\mu\leq\lambda$. (Dually for $\nabla_q$-filtrations.)

Note that the duality functor $\cal{D}$ from above restricts 
to $\T$. Moreover, as a consequence of the classification 
of indecomposable $\Uq$-modules in $\T$, 
we have $\cal{D}(T)\cong T$ for $T\in\T$. 
In particular, we have for all $\lambda\in X^+$ that
\[
(T:\Dl)=\dim(\Hom_{\Uq}(T,\Nl))=\dim(\Hom_{\Uq}(\Dl,T))=(T:\Nl).
\]
It is known that $\T$ is a Krull--Schmidt category, closed under 
finite direct sums, taking summands and finite tensor products 
(the latter is a non-trivial fact, see \cite[Theorem 3.3]{par}).

For a fixed $\lambda\in X^+$ we have $\Uq$-homomorphisms
\[
\begin{gathered}
\xymatrix{
  \Dl  \ar@{^{(}->}[r]^{\iota^{\lambda}}    &  \Tl  \ar@{->>}[r]^{\pi^{\lambda}}  &  \Nl,
}
\end{gathered}
\]
where $\iota^{\lambda}$ is the inclusion of the first 
$\Uq$-submodule in a $\Delta_q$-filtration of 
$\Tl$ and $\pi^{\lambda}$ is the surjection 
onto the last quotient in a $\nabla_q$-filtration of 
$\Tl$. Note that these are only defined up 
to scalars and we fix scalars in the following such that
$\pi^{\lambda}\circ\iota^{\lambda}=c^{\lambda}$ (where $c^{\lambda}$ is 
again the $\Uq$-homomorphism from \eqref{eq-uniquemap}).

\begin{rem}\label{rem-new}
Let $T\in\T$. 
An easy argument (based on \fullref{thm-extfunctor}) shows
the following crucial fact:
\begin{equation}\label{eq-important}
\Extii(\Dl,T)\hspace*{-0.3ex}=\hspace*{-0.3ex}0\hspace*{-0.3ex}=\hspace*{-0.3ex}\Extii(T,\Nl)\Rightarrow \Extii(\mathrm{coker}(\iota^{\lambda}),T)\hspace*{-0.3ex}=\hspace*{-0.3ex}0\hspace*{-0.3ex}=\hspace*{-0.3ex}\Extii(T,\ker(\pi^{\lambda}))
\end{equation}
for all $\lambda\in X^+$.
Consequently, we see that any $\Uq$-homomorphism $g\colon \Dl\to T$ 
extends to a $\Uq$-homomorphism $\overline{g}\colon \Tl\to T$ 
whereas any $\Uq$-homomorphism $f\colon T\to \Nl$ factors 
through $\Tl$ via some $\overline{f}\colon T\to\Tl$.
\end{rem}

\begin{rem}\label{remark:calc-character}
In \cite{astproofs} it is described in detail how to compute 
$(\Tl:\Dm)$ 
for $\lambda,\mu\in X^+$. This can be done algorithmically 
in case $q$ is a 
complex, primitive $l$-th root of unity, i.e. 
one can use Soergel's version of 
the \textit{affine parabolic Kazhdan--Lusztig polynomials}. 
For brevity, we do not recall the definition of these polynomials here, but refer 
to \cite[Section 3]{soe3} where the relevant polynomials 
are denoted $n_{y,x}$ (and where all the other relevant notions are defined). 
The main point for us is the following theorem due to Soergel \cite[Theorem 5.12]{soe4} 
(see also \cite[Conjecture 7.1]{soe3}): 
Suppose $\K=\C$ and $q$ is a complex, primitive $l$-th root of unity.
For each pair $\lambda,\mu\in X^+$ with $\lambda$ being an $l$-regular $\Uq$-weight 
(that is, $\Tl$ belongs to a regular block of $\T$) we have 
(with $n_{\mu\lambda}$ equal to the relevant $n_{y,x}$)
\[
(\Tl:\Dm)=n_{\mu\lambda}(1)=(\Tl:\Nm).
\]
From this one obtains a method to find 
the indecomposable summands of
$\Uq$-tilting modules
with known characters (e.g. tensor products of minuscule representations).
\end{rem}

%
%
\section{Cellular structures on endomorphism algebras}\label{sec-endorings}
In this section we give our 
construction of cellular bases for endomorphism 
rings $\Endrr$ of $\Uq$-tilting modules $T$ 
and prove our main result, that is, \fullref{thm-cell}.

The main tool is \fullref{thm-baseshom2}. The proof of the 
latter needs several ingredients which we establish in 
the form of separate lemmas 
collected in \fullref{sub-celllem}.

\subsection{The basis theorem}\label{sub-homspaces}

As before, we consider the category $\Mod{\Uq}$. Moreover, we fix 
two $\Uq$-modules $M,N$, where 
we assume that $M$ has a $\Delta_q$-filtration and $N$ has a 
$\nabla_q$-filtration. Then, by \fullref{cor-extfunctor}, we have
\begin{equation}\label{eq-dimhom}
\dim(\Hom_{\Uq}(M,N))=\sum_{\lambda\in X^+}(M:\Dl)(N:\Nl).
\end{equation}
We point out that the sum in \eqref{eq-dimhom} is actually finite 
since $(M:\Dl)\neq 0$ for only a finite number of $\lambda\in X^+$.
(Dually, $(N:\Nl)\neq 0$ for only finitely many $\lambda\in X^+$.)

Given $\lambda\in X^+$, we define for $(N:\Nl)>0$ respectively for $(M:\Dl)>0$ the two sets
\[
\cal{I}^{\lambda}=\{1,\dots,(N:\Nl)\}
\quad\text{and}\quad \cal{J}^{\lambda}=\{1,\dots,(M:\Dl)\}.
\]
By convention, $\cal{I}^{\lambda}=\emptyset$ and $\cal{J}^{\lambda}=\emptyset$ if 
$(N:\Nl)=0$ respectively if $(M:\Dl)=0$.

We can fix a basis 
of $\Hom_{\Uq}(M,\Nl)$
indexed by $\cal{J}^{\lambda}$. We denote this fixed basis by
$F^{\lambda}=\{f_j^{\lambda}\colon M\to\Nl\mid j\in\cal{J}^{\lambda}\}$.
By \fullref{prop-extfunctor2} and \eqref{eq-important},
we see that all elements of $F^{\lambda}$ factor through 
the $\Uq$-tilting module $\Tl$, i.e. we have commuting diagrams
\[
\begin{xy}
  \xymatrix{
      M \ar[rd]_/-0.25em/{f_j^{\lambda}}\ar@{.>}[r]^/-0.25em/{\exists\;\overline{f}_j^{\lambda}}    &   \Tl \ar@{->>}[d]^/-0.25em/{\pi^{\lambda}}  \\
        &   \Nl.   
  }
\end{xy}
\]
We call $\overline{f}_j^{\lambda}$ a
\textit{lift} of $f_j^{\lambda}$. 
(Note that a lift $\overline{f}_j^{\lambda}$ is not unique.)
Dually, we can choose a basis 
of $\Hom_{\Uq}(\Dl,N)$ as $G^{\lambda}=\{g_i^{\lambda}\colon \Dl\to N\mid i\in\cal{I}^{\lambda}\}$,
which extends to give 
(a non-unique) lift $\overline{g}_i^{\lambda}\colon \Tl\to N$ such that 
$\overline{g}_i^{\lambda}\circ \iota^{\lambda}=g_i^{\lambda}$ 
for all $i\in\cal{I}^{\lambda}$.

We can use this setup to define a basis for $\Hom_{\Uq}(M,N)$ 
which, when $M=N$, turns out to be a cellular basis, see \fullref{thm-cell}. 
For each $\lambda\in X^+$ and all $i\in\cal{I}^{\lambda},j\in\cal{J}^{\lambda}$ set
\[
c_{ij}^{\lambda}=\overline{g}_i^{\lambda}\circ \overline{f}_j^{\lambda}\in\Hom_{\Uq}(M,N).
\]
Our main result here is now the following.

\begin{thm}(\textbf{Basis theorem.})\label{thm-baseshom2}
For any choice of $F^{\lambda}$ and $G^{\lambda}$ 
as above and any choice of 
lifts of the $f_j^{\lambda}$'s and the $g_i^{\lambda}$'s (for all $\lambda\in X^+$), the set
\[
GF=\{c_{ij}^{\lambda}
\mid \lambda\in X^+,\; i\in\cal{I}^{\lambda},j\in\cal{J}^{\lambda}\}
\]
is a basis of $\Hom_{\Uq}(M,N)$.\makeqed
\end{thm}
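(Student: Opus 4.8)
The plan is to reduce Theorem~\ref{thm-baseshom2} to a single \emph{spanning} statement plus the dimension count \eqref{eq-dimhom}. The family $GF$ is indexed by the triples $(\lambda,i,j)$ with $\lambda\in X^+$, $i\in\cal{I}^{\lambda}$, $j\in\cal{J}^{\lambda}$, and the number of such triples is $\sum_{\lambda\in X^+}(N:\Nl)(M:\Dl)=\dim\Hom_{\Uq}(M,N)$ by \eqref{eq-dimhom}. Since a spanning family indexed by a set of cardinality $\dim\Hom_{\Uq}(M,N)$ is automatically a basis (and in particular its members are then pairwise distinct), it suffices to prove that the $c_{ij}^{\lambda}$ span $\Hom_{\Uq}(M,N)$. (One could equally well prove linear independence; I find spanning more convenient to run inductively.)

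First I would induct on the length $k$ of a $\nabla_q$-filtration of $N$, the case $k=0$ being trivial. For $k\geq 1$ choose such a filtration with top section $\nabla_q(\nu)$, giving a short exact sequence $0\to N'\to N\xrightarrow{\,p\,}\nabla_q(\nu)\to 0$ with $N'$ admitting a $\nabla_q$-filtration of length $k-1$. Since $M$ has a $\Delta_q$-filtration, $\Extii(M,N')=0$ by Proposition~\ref{prop-extfunctor2} and a standard d\'evissage, so applying $\Hom_{\Uq}(M,-)$ produces the short exact sequence
\[
0\to\Hom_{\Uq}(M,N')\to\Hom_{\Uq}(M,N)\xrightarrow{\,p_*\,}\Hom_{\Uq}(M,\nabla_q(\nu))\to 0.
\]
The strategy is to match $GF$ with this sequence: the $c_{ij}^{\lambda}$ that factor through $N'$ should span the kernel $\Hom_{\Uq}(M,N')$, where one identifies them — using that the lifts $\overline f_j^{\lambda}$ depend only on $M$ — with the corresponding family for $N'$, which spans by the inductive hypothesis; the remaining $c_{ij}^{\lambda}$, namely the ones attached to the extra occurrence of $\nu$, should map under $p_*$ onto a spanning set of $\Hom_{\Uq}(M,\nabla_q(\nu))$.

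The key local computation for the second point is as follows. For the ``new'' index $i_0\in\cal{I}^{\nu}$ one has $p\circ g_{i_0}^{\nu}\neq 0$: it is a nonzero map $\Delta_q(\nu)\to\nabla_q(\nu)$ because $g_{i_0}^{\nu}$ is chosen outside $\Hom_{\Uq}(\Delta_q(\nu),N')=\ker\bigl(p_*\colon\Hom_{\Uq}(\Delta_q(\nu),N)\to\Hom_{\Uq}(\Delta_q(\nu),\nabla_q(\nu))\bigr)$, so $p\circ g_{i_0}^{\nu}$ is a nonzero multiple of $c^{\nu}$ by Theorem~\ref{thm-extfunctor}. Since $\Hom_{\Uq}(T_q(\nu),\nabla_q(\nu))$ is one-dimensional, spanned by $\pi^{\nu}$ (by Corollary~\ref{cor-extfunctor} and $(T_q(\nu):\Delta_q(\nu))=1$ from Proposition~\ref{prop-tiltingclass}), the lift satisfies $p\circ\overline g_{i_0}^{\nu}=a\,\pi^{\nu}$ with $a\neq 0$, and the normalisation $\pi^{\nu}\circ\iota^{\nu}=c^{\nu}$ of Remark~\ref{rem-new} pins down $a$. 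Hence $p_*(c_{i_0 j}^{\nu})=a\,\pi^{\nu}\circ\overline f_j^{\nu}=a\,f_j^{\nu}$, so these images run (up to the nonzero scalar $a$) over the basis $F^{\nu}$ of $\Hom_{\Uq}(M,\nabla_q(\nu))$. Combining this with the first part through the short exact sequence — a spanning set of the kernel together with lifts of a basis of the quotient — finishes the induction.

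The main obstacle, and the reason the argument of Subsection~\ref{sub-celllem} must be split into several lemmas, is that the theorem claims the basis property for \emph{arbitrary} choices of $F^{\lambda}$, $G^{\lambda}$ and of the lifts $\overline f_j^{\lambda}$, $\overline g_i^{\lambda}$. Changing a basis $F^{\lambda}$ or $G^{\lambda}$ only multiplies the $c_{ij}^{\lambda}$ by invertible matrices and does not affect their span; but replacing a lift $\overline f_j^{\lambda}$ by $\overline f_j^{\lambda}+\delta$ with $\delta\colon M\to\ker(\pi^{\lambda})$ alters $c_{ij}^{\lambda}$ by $\overline g_i^{\lambda}\circ\delta$, and $\ker(\pi^{\lambda})$ carries a $\nabla_q$-filtration all of whose sections $\nabla_q(\mu)$ satisfy $\mu<\lambda$ (by Proposition~\ref{prop-tiltingclass} and Corollary~\ref{cor-tiltingext3}, $\nabla_q(\lambda)$ occurs once in $T_q(\lambda)$, as the chosen top section). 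One must therefore show that such correction terms already lie in the span of the $c_{i'j'}^{\mu}$ with $\mu<\lambda$, i.e. establish a simultaneous triangularity of $GF$ with respect to the weight order, and check compatibility of the $\lambda\neq\nu$ terms with the $N'$-situation up to these lower-order corrections; running the induction above so that it respects this bookkeeping is the technical heart of the proof.
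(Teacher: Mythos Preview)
Your proposal is correct and follows essentially the same route as the paper: the induction on the length of a $\nabla_q$-filtration of $N$ via the short exact sequence coming from Ext-vanishing is exactly Proposition~\ref{prop-baseshom}; your observation that changing $G^{\lambda}$ multiplies the $c_{ij}^{\lambda}$ by invertible matrices is Lemma~\ref{lem-cell5}; and your triangularity argument for changing lifts (correction terms land in strictly smaller weights) is Lemma~\ref{lem-cell4}, which in the paper rests on the weight-space analysis of Lemmata~\ref{lem-cell1} and~\ref{lem-cell3}. The only cosmetic difference is that you phrase the first step as ``spanning plus dimension count via~\eqref{eq-dimhom}'' whereas the paper constructs a specific basis directly; these are equivalent, and your identification of the lift-dependence as the technical heart matches the paper's organization precisely.
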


\begin{proof}
This follows from \fullref{prop-baseshom} combined with \fullref{lem-cell4} 
and \fullref{lem-cell5}. 
\end{proof}

The basis $GF$ for $\Hom_{\Uq}(M,N)$ can be illustrated in a commuting diagram as
\[
\begin{gathered}
\begin{xy}
  \xymatrix{
  	      &   \Dl \ar@{^{(}->}[d]_{\iota^{\lambda}}\ar[rd]^{g_i^{\lambda}} &  \\
      M \ar@{.>}[r]^/-0.25em/{\overline{f}_j^{\lambda}}\ar[rd]_{f_j^{\lambda}}     &   \Tl \ar@{->>}[d]^{\pi^{\lambda}} \ar@{.>}[r]_/0.25em/{\overline{g}_i^{\lambda}}&  N\\
                   &  \Nl   &
  }
\end{xy}
\end{gathered}.
\]
Since $\Uq$-tilting modules have both a $\Delta_q$- and a $\nabla_q$-filtration, we get as 
an immediate consequence a
key result for our purposes.
\begin{cor}\label{cor-baseshom2}
Let $T\in\T$. Then $GF$ is, for any choices involved, a basis of $\Endrr$.\qedmake
\end{cor}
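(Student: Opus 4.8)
The plan is to obtain this as a pure specialization of Theorem~\ref{thm-baseshom2} with $M=N=T$. By Definition~\ref{defn-tiltcat}, a $\Uq$-tilting module $T\in\T$ is by definition a $\Uq$-module possessing both a $\Delta_q$-filtration and a $\nabla_q$-filtration, so the standing hypotheses of Theorem~\ref{thm-baseshom2} are satisfied by $M=N=T$. First I would record the simplifications that occur in this case: for each $\lambda\in X^+$ the two index sets coincide, $\cal{I}^{\lambda}=\cal{J}^{\lambda}=\{1,\dots,(T:\Dl)\}$, using $(T:\Dl)=(T:\Nl)$ from Corollary~\ref{cor-tiltingext3}; and, correspondingly, $\dim(\Endrr)=\sum_{\lambda\in X^+}(T:\Dl)^2$ by~\eqref{eq-dimhom}.

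Next I would run through the construction concretely. For every $\lambda\in X^+$ with $(T:\Dl)>0$ choose, via Corollary~\ref{cor-extfunctor}, bases $F^{\lambda}=\{f_j^{\lambda}\colon T\to\Nl\}$ of $\Hom_{\Uq}(T,\Nl)$ and $G^{\lambda}=\{g_i^{\lambda}\colon\Dl\to T\}$ of $\Hom_{\Uq}(\Dl,T)$, then pick lifts $\overline{f}_j^{\lambda}\colon T\to\Tl$ through $\pi^{\lambda}$ and $\overline{g}_i^{\lambda}\colon\Tl\to T$ through $\iota^{\lambda}$ as in Remark~\ref{rem-new}; these lifts exist by the Ext-vanishing consequence~\eqref{eq-important}. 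Setting $c_{ij}^{\lambda}=\overline{g}_i^{\lambda}\circ\overline{f}_j^{\lambda}\in\Endrr$, Theorem~\ref{thm-baseshom2} applies verbatim and yields that $GF=\{c_{ij}^{\lambda}\mid\lambda\in X^+,\ i\in\cal{I}^{\lambda},\ j\in\cal{J}^{\lambda}\}$ is a basis of $\Hom_{\Uq}(T,T)=\Endrr$, for any choices of the $F^{\lambda}$, $G^{\lambda}$ and of the lifts involved.

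There is essentially no obstacle specific to this corollary: all of the genuine content — the spanning and the linear independence of the $c_{ij}^{\lambda}$ — is established inside the proof of Theorem~\ref{thm-baseshom2}, and the passage from that theorem to the statement here is only the observation that a tilting module carries both types of filtration simultaneously. The single point worth stressing in writing it up is the phrase \emph{for any choices involved}: independence of the basis $GF$ from the chosen homomorphism bases and, crucially, from the (non-unique) lifts is exactly what Theorem~\ref{thm-baseshom2} already guarantees, so nothing additional needs to be verified.
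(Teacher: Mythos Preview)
Your proposal is correct and matches the paper's approach exactly: the corollary is an immediate specialization of Theorem~\ref{thm-baseshom2} to $M=N=T$, using only that a $\Uq$-tilting module by definition has both a $\Delta_q$- and a $\nabla_q$-filtration. The extra observations you record (coincidence of $\cal{I}^{\lambda}$ and $\cal{J}^{\lambda}$ via Corollary~\ref{cor-tiltingext3}, the dimension formula) are not needed for the corollary itself but are harmless and indeed used later in the paper.
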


\subsection{Proof of the basis theorem}\label{sub-celllem}

We first show that, given lifts $\overline{f}_j^{\lambda}$, there is a consistent choice 
of lifts $\overline{g}_i^{\lambda}$ such that $GF$ is a basis 
of $\Hom_{\Uq}(M,N)$.

\begin{prop}(\textbf{Basis theorem --- dependent version.})\label{prop-baseshom}
For any choice of $F^{\lambda}$ and any choice of 
lifts of the $f_j^{\lambda}$'s (for all $\lambda\in X^+$) there exist a 
choice of a basis $G^{\lambda}$ 
and a choice of 
lifts of the $g_i^{\lambda}$'s such that
$
GF=\{c_{ij}^{\lambda}
\mid \lambda\in X^+,\; i\in\cal{I}^{\lambda},j\in\cal{J}^{\lambda}\}
$
is a basis of $\Hom_{\Uq}(M,N)$.\makeqed
\end{prop}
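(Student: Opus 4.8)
The plan is to argue by induction on the length $k$ of a $\nabla_q$-filtration of $N$; throughout, $M$, the bases $F^{\lambda}=\{f_j^{\lambda}\colon M\to\Nl\}$ and the chosen lifts $\overline{f}_j^{\lambda}\colon M\to\Tl$ stay fixed (recall $\cal{J}^{\lambda}=\{1,\dots,(M:\Dl)\}$ depends only on $M$). The case $k\le 1$ is immediate: if $N=\Nm$, then by Theorem~\ref{thm-extfunctor} the space $\Hom_{\Uq}(\Dl,\Nm)$ equals $\K c^{\mu}$ when $\lambda=\mu$ and $0$ otherwise, so I take $g_1^{\mu}=c^{\mu}$ with lift $\overline{g}_1^{\mu}=\pi^{\mu}$ (legitimate since $\pi^{\mu}\circ\iota^{\mu}=c^{\mu}$); then $c_{1j}^{\mu}=\pi^{\mu}\circ\overline{f}_j^{\mu}=f_j^{\mu}$, so $GF=F^{\mu}$ is a basis of $\Hom_{\Uq}(M,\Nm)$ by Corollary~\ref{cor-extfunctor}.

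For the inductive step I peel off the bottom step of a $\nabla_q$-filtration of $N$, obtaining a short exact sequence $0\to\Nm\xrightarrow{a}N\xrightarrow{b}N'\to 0$ in which $N'$ carries a $\nabla_q$-filtration of length $k-1$. Since $M$ and $\Tl$ have $\Delta_q$-filtrations, Theorem~\ref{thm-extfunctor}, Proposition~\ref{prop-extfunctor2} and Corollary~\ref{cor-tiltingext1} give $\Extii(M,\Nm)=\Extii(\Dl,\Nm)=\Extii(\Tl,\Nm)=0$, so applying $\Hom_{\Uq}(M,-)$, $\Hom_{\Uq}(\Dl,-)$ and $\Hom_{\Uq}(\Tl,-)$ to the sequence yields short exact sequences of vector spaces. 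Now I invoke the induction hypothesis for $N'$: it supplies, for each $\lambda$, a basis $\{g_{0,i}^{\lambda}\colon\Dl\to N'\}$ of $\Hom_{\Uq}(\Dl,N')$ with lifts $\overline{g}_{0,i}^{\lambda}\colon\Tl\to N'$ such that $\{\overline{g}_{0,i}^{\lambda}\circ\overline{f}_j^{\lambda}\}$ is a basis of $\Hom_{\Uq}(M,N')$. Using $\Extii(\Tl,\Nm)=0$ I lift each $\overline{g}_{0,i}^{\lambda}$ through $b$ to $\overline{g}_i^{\lambda}\colon\Tl\to N$ with $b\circ\overline{g}_i^{\lambda}=\overline{g}_{0,i}^{\lambda}$, and set $g_i^{\lambda}=\overline{g}_i^{\lambda}\circ\iota^{\lambda}$ (so $b\circ g_i^{\lambda}=g_{0,i}^{\lambda}$). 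Finally I add the element $g_{\mathrm{new}}^{\mu}=a\circ c^{\mu}\colon\Dm\to N$ with the obvious lift $\overline{g}_{\mathrm{new}}^{\mu}=a\circ\pi^{\mu}\colon\Tm\to N$, and declare $G^{\lambda}=\{g_i^{\lambda}\}$ for $\lambda\ne\mu$ and $G^{\mu}=\{g_i^{\mu}\}\cup\{g_{\mathrm{new}}^{\mu}\}$. From the three-term exact sequences one checks that $G^{\lambda}$ is a basis of $\Hom_{\Uq}(\Dl,N)$: for $\lambda\ne\mu$ the map $b_\ast$ is an isomorphism carrying $G^{\lambda}$ onto the chosen basis of $\Hom_{\Uq}(\Dl,N')$, while for $\lambda=\mu$ we have $g_{\mathrm{new}}^{\mu}=a_\ast(c^{\mu})$ spanning $\operatorname{im}a_\ast$ and the $g_i^{\mu}$ mapping onto the chosen basis of the quotient $\Hom_{\Uq}(\Dm,N')$. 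In particular $|G^{\lambda}|=(N:\Nl)$, matching the index set $\cal{I}^{\lambda}$.

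It then remains to see that $GF=\{c_{ij}^{\lambda}\}\cup\{c_{\mathrm{new},j}^{\mu}\}$ is a basis of $\Hom_{\Uq}(M,N)$. By~\eqref{eq-dimhom} its cardinality already equals $\dim\Hom_{\Uq}(M,N)$, so it suffices to show it spans, which I do via the short exact sequence $0\to\Hom_{\Uq}(M,\Nm)\xrightarrow{a_\ast}\Hom_{\Uq}(M,N)\xrightarrow{b_\ast}\Hom_{\Uq}(M,N')\to 0$. Applying $b_\ast$ gives $b_\ast(c_{ij}^{\lambda})=\overline{g}_{0,i}^{\lambda}\circ\overline{f}_j^{\lambda}$, which run over the induction-hypothesis basis of $\Hom_{\Uq}(M,N')$, whereas $b_\ast(c_{\mathrm{new},j}^{\mu})=b\circ a\circ\pi^{\mu}\circ\overline{f}_j^{\mu}=0$; moreover $c_{\mathrm{new},j}^{\mu}=a\circ\pi^{\mu}\circ\overline{f}_j^{\mu}=a\circ f_j^{\mu}$, so $\{c_{\mathrm{new},j}^{\mu}\mid j\in\cal{J}^{\mu}\}=a_\ast(F^{\mu})$ is a basis of $\ker b_\ast=\operatorname{im}a_\ast$. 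Thus $GF$ is the union of a set mapping bijectively onto a basis of $\operatorname{im}b_\ast$ together with a basis of $\ker b_\ast$, hence a basis of $\Hom_{\Uq}(M,N)$, and the induction closes.

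The only real difficulty is organizational: one must keep the choices of the bases $G^{\lambda}$ and of the lifts $\overline{g}_i^{\lambda}$ coherent with the peeled-off short exact sequence so that the (cellular-type) triangular picture survives at each stage; the homological input is nothing beyond the Ext-vanishing of Theorem~\ref{thm-extfunctor}, packaged as Corollary~\ref{cor-extfunctor}, Corollary~\ref{cor-tiltingext1} and Proposition~\ref{prop-extfunctor2}. By the duality $\cal{D}$ one could equally run the induction on the length of a $\Delta_q$-filtration of $M$.
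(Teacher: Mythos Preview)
Your argument is correct and follows essentially the same inductive strategy as the paper, with the only difference being that you peel off the bottom submodule $\Nm\hookrightarrow N$ of the $\nabla_q$-filtration whereas the paper peels off the top quotient $N_k\twoheadrightarrow\Nl$; both use the same Ext-vanishing input and the resulting three-term exact sequences in the same way. One minor remark: the paper's subsequent Lemma~\ref{lem-cell1} is phrased to ``proceed as in the proof of Proposition~\ref{prop-baseshom}'', so if you were to replace the paper's proof with yours you would want to check that the weight-space argument there adapts to your ordering (it does, by the obvious dual reasoning).
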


The corresponding statement with the roles of $f$'s and $g$'s 
swapped clearly holds as well.

\begin{proof}
We will construct $GF$ inductively.
For this purpose, let
\[
0=N_0\subset N_1\subset \cdots \subset N_{k-1} \subset N_k=N
\]
be a $\nabla_q$-filtration of $N$, i.e.
$N_{k^{\prime}+1}/N_{k^{\prime}}\cong 
\nabla_q(\lambda_{k^{\prime}})$ for some $\lambda_{k^{\prime}}\in X^+$ 
and all $k^{\prime}=0,\dots,k-1$.

Let $k=1$ and $\lambda_1=\lambda$. Then $N_1=\Nl$ and 
$\{c^{\lambda}\colon \Dl\to\Nl\}$ gives a basis of $\Hom_{\Uq}(\Dl,\Nl)$, 
where $c^{\lambda}$ is again the $\Uq$-homomorphism 
chosen in \eqref{eq-uniquemap}. Set 
$g^{\lambda}_1=c^{\lambda}$ and observe that 
$\overline{g}^{\lambda}_1=\pi^{\lambda}$ satisfies $\overline{g}^{\lambda}_1\circ \iota^{\lambda}=g^{\lambda}_1$. Thus, we have a basis
and a corresponding lift. This clearly gives a basis of 
$\Hom_{\Uq}(M,N_1)$, since, by assumption, we have that 
$F^{\lambda}$ gives a basis of $\Hom_{\Uq}(M,\Nl)$ and
$\pi^{\lambda}\circ \overline{f}_j^{\lambda}=f_j^{\lambda}$.

Hence, it remains to consider the case $k>1$. Set $\lambda_{k}=\lambda$ and 
observe that we have a short exact sequence of the form
\begin{equation}\label{eq-shortseq1}
\xymatrix{
  0\ar[r] & N_{k-1}  \ar@{^{(}->}[r]^/0.05cm/{\mathrm{inc}}    &  N_{k}  
  \ar@{->>}[r]^/-0.15cm/{\mathrm{pro}}  &  \Nl\ar[r] & 0.
}
\end{equation}
By \fullref{thm-extfunctor} (and the usual implication as 
in \eqref{eq-important}) 
this leads to a short exact sequence
\begin{equation}\label{eq-shortseq2}
\xymatrixcolsep{1.5pc}
\xymatrix{
  0\ar[r] & \Hom_{\Uq}(M,N_{k-1})  \ar@{^{(}->}[r]^/0.1cm/{\mathrm{inc}_*}    &  \Hom_{\Uq}(M,N_{k}) \ar@{->>}[r]^/-0.15cm/{\mathrm{pro}_*}  &  \Hom_{\Uq}(M,\Nl)\ar[r] & 0.
}
\end{equation}
By induction, we get from \eqref{eq-shortseq2} for all $\mu\in X^+$ a basis 
of $\Hom_{\Uq}(\Dm,N_{k-1})$ consisting of 
$g_i^{\mu}$'s with lifts $\overline{g}_i^{\mu}$ such that
\begin{gather}\label{eq:basis-included}
\{
c_{ij}^{\mu}=\overline{g}_i^{\mu}\circ \overline{f}_j^{\mu}\mid \mu\in X^+,\; i\in\cal{I}^{\mu}_{k-1},
j\in\cal{J}^{\mu}
\}
\end{gather}
is a basis of $\Hom_{\Uq}(M,N_{k-1})$ (here we use 
$\cal{I}^{\mu}_{k-1}=\{1,\dots,(N_{k-1}:\nabla_q(\mu))\}$). 
We define $g_i^{\mu}(N_k)=\mathrm{inc}\circ g_i^{\mu}$ and $\overline{g}_i^{\mu}(N_{k})=\mathrm{inc}\circ\overline{g}_i^{\mu}$ for each 
$\mu\in X^+$ and each $i\in\cal{I}^{\mu}_{k-1}$.

We now have to consider two cases, namely $\lambda\neq \mu$ and $\lambda=\mu$. In the 
first case we see that $\Hom_{\Uq}(\Delta_q(\mu),\Nl)=0$, so that, by 
using \eqref{eq-shortseq1} and the usual implication from \eqref{eq-important},
\[
\Hom_{\Uq}(\Delta_q(\mu),N_{k-1})\cong\Hom_{\Uq}(\Delta_q(\mu),N_{k}).
\]
Thus, our basis from \eqref{eq:basis-included} gives a basis of 
$\Hom_{\Uq}(\Delta_q(\mu),N_{k})$ and also gives the corresponding lifts.
On the other hand, if $\lambda=\mu$, then 
\[
(N_k:\Nl)=(N_{k-1}:\Nl)+1.
\] 
By \fullref{thm-extfunctor} (and the corresponding 
implication as in \eqref{eq-important}), 
we can 
choose $g^{\lambda}\colon\Dl\to N_{k}$ such that 
$\mathrm{pro}\circ g^{\lambda}=c^{\lambda}$. Then any choice of a 
lift $\overline{g}^{\lambda}$ of $g^{\lambda}$ will satisfy 
$\mathrm{pro}\circ \overline{g}^{\lambda}=\pi^{\lambda}$.

Adjoining $g^{\lambda}$ to the basis from \eqref{eq:basis-included} gives 
a basis of $\Hom_{\Uq}(\Delta_q(\lambda),N_{k})$ which satisfies the 
lifting property.
Note that we know from the case $k=1$ that
\[
\{
\mathrm{pro}\circ\overline{g}^{\lambda}\circ \overline{f}_j^{\lambda}=\pi^{\lambda}\circ \overline{f}_j^{\lambda}\mid j\in\cal{J}^{\lambda}
\}
\]
is a basis of $\Hom_{\Uq}(M,\Nl)$. 
Combining everything: we have that
\[
\{
c_{ij}^{\lambda}=\overline{g}_i^{\lambda}(N_{k})\circ \overline{f}_j^{\lambda}\mid \lambda\in X^+,\; i\in\cal{I}^{\lambda},
j\in\cal{J}^{\lambda}
\}
\]
is a basis of $\Hom_{\Uq}(M,N_{k})$ (by enumerating $\overline{g}_{(N:\Nl)}^{\lambda}(N_k)=\overline{g}^{\lambda}$ in the $\lambda=\mu$ case).
\end{proof}

We assume in the following that we have 
fixed some choices as in \fullref{prop-baseshom}.

Let $\lambda\in X^+$. Given $\varphi\in\Hom_{\Uq}(M,N)$, we denote by 
$\varphi_{\lambda}\in\Hom_{\Uq^0}(M_{\lambda},N_{\lambda})$ the 
induced $\Uq^0$-homomorphism (that is, $\K$-linear maps) between the 
$\lambda$-weight spaces $M_{\lambda}$ and $N_{\lambda}$. In addition, we denote by 
$\Hom_{\K}(M_{\lambda},N_{\lambda})$ the $\K$-linear maps between 
these $\lambda$-weight spaces.

\begin{lem}\label{lem-cell1}
For any $\lambda\in X^+$ the induced set
$
\{
(c_{ij}^{\lambda})_{\lambda}\mid c_{ij}^{\lambda}\in GF
\}
$
is a linearly independent subset of $\Hom_{\K}(M_{\lambda},N_{\lambda})$.\makeqed
\end{lem}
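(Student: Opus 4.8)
The plan is to reduce the statement to one about ordinary linear maps between the one-dimensional top weight spaces of the tilting modules $\Tl$, and then to invoke the standard fact that elementary tensors built from two linearly independent families are linearly independent.

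First I would record that each $(c_{ij}^{\lambda})_{\lambda}=(\overline{g}_i^{\lambda})_{\lambda}\circ(\overline{f}_j^{\lambda})_{\lambda}$ factors through $\Tl_{\lambda}$, which is one-dimensional by Proposition~\ref{prop-tiltingclass}. Taking the $\lambda$-weight space is an exact functor on $\Mod{\Uq}$ (every module is the direct sum of its weight spaces and $\Uq$-homomorphisms respect them), so the injection $\iota^{\lambda}$ and the surjection $\pi^{\lambda}$ induce isomorphisms $(\iota^{\lambda})_{\lambda}\colon\Dl_{\lambda}\xrightarrow{\sim}\Tl_{\lambda}$ and $(\pi^{\lambda})_{\lambda}\colon\Tl_{\lambda}\xrightarrow{\sim}\Nl_{\lambda}$ of one-dimensional spaces. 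Since $\pi^{\lambda}\circ\overline{f}_j^{\lambda}=f_j^{\lambda}$ and $\overline{g}_i^{\lambda}\circ\iota^{\lambda}=g_i^{\lambda}$, it is then enough to prove that the families $\{(f_j^{\lambda})_{\lambda}\}_{j\in\cal{J}^{\lambda}}$ and $\{(g_i^{\lambda})_{\lambda}\}_{i\in\cal{I}^{\lambda}}$ are linearly independent in $\Hom_{\K}(M_{\lambda},\Nl_{\lambda})$ and $\Hom_{\K}(\Dl_{\lambda},N_{\lambda})$ respectively.

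The key step will be to show that the restriction map
\[
\Hom_{\Uq}(M,\Nl)\to\Hom_{\K}(M_{\lambda},\Nl_{\lambda}),\qquad f\mapsto f_{\lambda},
\]
is injective. For this, suppose $f\neq 0$; then $f(M)$ is a nonzero $\Uq$-submodule of $\Nl$, hence contains the simple socle $\Ll$ of $\Nl$. Now $\Ll$ is the image of the map $c^{\lambda}$ from~\eqref{eq-uniquemap}, and $c^{\lambda}$ is nonzero on $\Dl_{\lambda}$ (otherwise it would vanish on the weight space generating the highest weight module $\Dl$, forcing $c^{\lambda}=0$), so $\Ll_{\lambda}\neq 0$; therefore $0\neq\Ll_{\lambda}\subseteq f(M)_{\lambda}=f_{\lambda}(M_{\lambda})$, contradicting $f_{\lambda}=0$. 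Applying the duality $\cal{D}$ from~\eqref{eq-duality} — a contravariant self-equivalence of $\Mod{\Uq}$ interchanging $\Delta_q$- and $\nabla_q$-filtrations, with $\cal{D}(\Dl)\cong\Nl$ and $\cal{D}(N)_{\lambda}\cong(N_{\lambda})^{*}$ — the same argument yields injectivity of $g\mapsto g_{\lambda}$ on $\Hom_{\Uq}(\Dl,N)$. Since $F^{\lambda}$ and $G^{\lambda}$ are bases of these Hom-spaces by construction, their images under these injective restriction maps are linearly independent, which is exactly the reduction of the previous paragraph.

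To conclude, fix a basis vector $v_{\lambda}$ of $\Tl_{\lambda}$ and write $(\overline{f}_j^{\lambda})_{\lambda}$ as $m\mapsto\phi_j(m)v_{\lambda}$ with $\phi_j\in M_{\lambda}^{*}$, and $(\overline{g}_i^{\lambda})_{\lambda}$ as $cv_{\lambda}\mapsto c\psi_i$ with $\psi_i\in N_{\lambda}$. Transporting the linear independences just obtained through $(\pi^{\lambda})_{\lambda}$ and $(\iota^{\lambda})_{\lambda}$ shows that $\{\phi_j\}_{j\in\cal{J}^{\lambda}}$ is linearly independent in $M_{\lambda}^{*}$ and $\{\psi_i\}_{i\in\cal{I}^{\lambda}}$ is linearly independent in $N_{\lambda}$. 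Since $(c_{ij}^{\lambda})_{\lambda}$ is then the elementary tensor $\psi_i\otimes\phi_j$ in $N_{\lambda}\otimes_{\K}M_{\lambda}^{*}\cong\Hom_{\K}(M_{\lambda},N_{\lambda})$, and elementary tensors formed from two linearly independent families are linearly independent, the lemma follows. I expect the injectivity of $f\mapsto f_{\lambda}$ to be the only genuine difficulty; everything else is bookkeeping with weight spaces together with the standard tensor-product fact.
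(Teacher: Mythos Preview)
Your argument is correct and takes a genuinely different route from the paper. The paper proceeds by induction on the length of the $\nabla_q$-filtration of $N$, mirroring the inductive construction of Proposition~\ref{prop-baseshom}: the base case $N=\Nl$ is handled by Frobenius reciprocity (which embeds $\Hom_{\Uq}(M,\Nl)$ into $\Hom_{\K}(M_{\lambda},\K)$), and the inductive step uses the short exact sequence $0\to (N_{k-1})_{\lambda}\to (N_k)_{\lambda}\to \K\to 0$ together with the two-part structure of the basis built in that proposition.

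Your approach instead exploits directly that $\Tl_{\lambda}$ is one-dimensional, so that each $(c_{ij}^{\lambda})_{\lambda}$ is an elementary tensor $\psi_i\otimes\phi_j$; the work then reduces to the injectivity of the two restriction maps $f\mapsto f_{\lambda}$ and $g\mapsto g_{\lambda}$, which you prove via the simple socle of $\Nl$ (and its dual). This is cleaner in that it avoids induction and, more importantly, works for \emph{any} choices of bases $F^{\lambda},G^{\lambda}$ and lifts, not only for the specific ones produced in Proposition~\ref{prop-baseshom}; the paper only obtains that generality afterwards via Lemmata~\ref{lem-cell4} and~\ref{lem-cell5}. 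On the other hand, the paper's inductive argument dovetails with the construction already in place and reuses Frobenius reciprocity rather than introducing a separate socle argument. One small remark: for the injectivity of $g\mapsto g_{\lambda}$ you do not really need to invoke $\cal{D}$; it is immediate that a nonzero $g\colon\Dl\to N$ cannot kill the highest weight vector generating $\Dl$.
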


\begin{proof}
We proceed as in the proof of \fullref{prop-baseshom}.

If $N=\Nl$ (this was $k=1$ above), then  
$c_{1j}^{\lambda}=\pi^{\lambda}\circ \overline{f}_j^{\lambda}=f_j^{\lambda}$ and  
the $c_{1j}^{\lambda}$'s form a basis of $\Hom_{\Uq}(M,\Nl)$. By 
the $q$-Frobenius reciprocity from \cite[Proposition 1.17]{apw} we have
\[
\Hom_{\Uq}(M,\Nl)\cong\Hom_{\Uq^-\Uq^0}(M,\K_{\lambda})\subset \Hom_{\Uq^0}(M,\K_{\lambda})=\Hom_{\K}(M_{\lambda},\K).
\]
Hence, because $N_{\lambda}=\K$ in this case, we have the base of 
the induction.

Assume now $k>1$. The construction of $\{c_{ij}^{\mu}(N_k)\}_{\mu,i,j}$ in 
the proof of \fullref{prop-baseshom} shows that this set consists of 
two separate parts: one being the basis 
from \eqref{eq:basis-included} coming from a basis for 
$\Hom_{\Uq}(M,N_{k-1})$ and the second part (which only occurs when
$\lambda=\mu$) coming from a basis from Hom $\Hom_{\Uq}(\Delta_q(\lambda),N_k)$.

By \eqref{eq-shortseq2} there is a short exact sequence
\[
\xymatrix{
  0\ar[r] & \Hom_{\K}(M_{\lambda},(N_{k-1})_{\lambda})  \ar@{^{(}->}[r]^/0.1cm/{\mathrm{inc}_*}    &  \Hom_{\K}(M_{\lambda},(N_k)_{\lambda}) \ar@{->>}[r]^/0.10cm/{\mathrm{pro}_*}  &  \Hom_{\K}(M_{\lambda},\K)\ar[r] & 0.
}
\]
Thus, we can proceed as in the proof of \fullref{prop-baseshom}.
\end{proof}

We need another piece of notation: we define for each $\lambda\in X^+$
\[
\Hom_{\Uq}(M,N)^{\leq \lambda}=\{\varphi\in\Hom_{\Uq}(M,N)\mid \varphi_{\mu}=0\text{ unless }\mu\leq \lambda\}.
\]
In words: a $\Uq$-homomorphism $\varphi\in\Hom_{\Uq}(M,N)$ belongs 
to $\Hom_{\Uq}(M,N)^{\leq \lambda}$ if and only if $\varphi$ vanishes on all $\Uq$-weight 
spaces $M_{\mu}$ with $\mu\not\leq\lambda$. In addition to the notation above, we use 
the evident notation $\Hom_{\Uq}(M,N)^{< \lambda}$. We arrive at the following.

\begin{lem}\label{lem-cell3}
For any fixed $\lambda\in X^+$ the sets
\[
\{
c_{ij}^{\mu}\mid c_{ij}^{\mu}\in GF,\;\mu\leq \lambda
\}\quad\text{and}\quad \{
c_{ij}^{\mu}\mid c_{ij}^{\mu}\in GF,\;\mu< \lambda
\}
\]
are bases of $\Hom_{\Uq}(M,N)^{\leq \lambda}$ and 
$\Hom_{\Uq}(M,N)^{< \lambda}$ respectively.\makeqed
\end{lem}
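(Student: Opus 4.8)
The plan is to exploit the weight-support bound built into the morphisms $c_{ij}^{\mu}$ together with the linear independence recorded in Lemma~\ref{lem-cell1}, running a triangularity argument against the basis $GF$ supplied by Proposition~\ref{prop-baseshom}. First I would observe that each $c_{ij}^{\mu}=\overline{g}_i^{\mu}\circ\overline{f}_j^{\mu}$ factors through $T_q(\mu)$, whose $\Uq$-weight spaces vanish outside $\{\nu\mid\nu\leq\mu\}$ by Proposition~\ref{prop-tiltingclass}; hence the induced map $(c_{ij}^{\mu})_{\nu}\colon M_{\nu}\to N_{\nu}$ is zero whenever $\nu\not\leq\mu$, that is, $c_{ij}^{\mu}\in\Hom_{\Uq}(M,N)^{\leq\mu}$. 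Consequently $\{c_{ij}^{\mu}\mid\mu\leq\lambda\}\subseteq\Hom_{\Uq}(M,N)^{\leq\lambda}$, and since $\nu\leq\mu<\lambda$ forces $\nu<\lambda$ we also get $\{c_{ij}^{\mu}\mid\mu<\lambda\}\subseteq\Hom_{\Uq}(M,N)^{<\lambda}$. Both sets are linearly independent for free, being subsets of the basis $GF$ of $\Hom_{\Uq}(M,N)$.

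The content is the reverse inclusion, and here the main work is the triangularity argument. Given $\varphi\in\Hom_{\Uq}(M,N)^{\leq\lambda}$, I would expand $\varphi=\sum_{\mu,i,j}a_{ij}^{\mu}c_{ij}^{\mu}$ in $GF$ (a finite sum) and show $a_{ij}^{\mu}=0$ as soon as $\mu\not\leq\lambda$. If not, pick $\nu$ maximal in the partial order on $X$ among those $\mu\not\leq\lambda$ with some $a_{ij}^{\mu}\neq0$; this exists since only finitely many coefficients are nonzero. Passing to $\nu$-weight spaces gives $\varphi_{\nu}=\sum_{\mu,i,j}a_{ij}^{\mu}(c_{ij}^{\mu})_{\nu}$, where by the first paragraph only terms with $\nu\leq\mu$ contribute. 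For such a $\mu$: if $\mu\leq\lambda$ then $\nu\leq\lambda$, a contradiction; if $\mu\not\leq\lambda$ and $\mu\neq\nu$ then $\mu>\nu$, contradicting the maximality of $\nu$. So only $\mu=\nu$ survives, i.e.\ $\varphi_{\nu}=\sum_{i,j}a_{ij}^{\nu}(c_{ij}^{\nu})_{\nu}$. Since $\nu\not\leq\lambda$ and $\varphi\in\Hom_{\Uq}(M,N)^{\leq\lambda}$, the left side is $0$; as the $(c_{ij}^{\nu})_{\nu}$ are linearly independent in $\Hom_{\K}(M_{\nu},N_{\nu})$ by Lemma~\ref{lem-cell1}, all $a_{ij}^{\nu}$ vanish, contradicting the choice of $\nu$. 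Hence $\varphi$ lies in the span of $\{c_{ij}^{\mu}\mid\mu\leq\lambda\}$, which is therefore a basis of $\Hom_{\Uq}(M,N)^{\leq\lambda}$.

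For the strict version I would rerun the identical argument with ``$\leq\lambda$'' replaced throughout by ``$<\lambda$'': a nonzero coefficient $a_{ij}^{\nu}$ with $\nu\not<\lambda$, chosen $\nu$ maximal, again reduces $\varphi_{\nu}$ to $\sum_{i,j}a_{ij}^{\nu}(c_{ij}^{\nu})_{\nu}$ (using $\nu\leq\mu<\lambda\Rightarrow\nu<\lambda$ to kill the remaining terms), and $\varphi_{\nu}=0$ combined with Lemma~\ref{lem-cell1} gives the contradiction. The only point that needs care---and the step I expect to be the sole subtlety---is the claim that in $\varphi_{\nu}=\sum a_{ij}^{\mu}(c_{ij}^{\mu})_{\nu}$ no basis vector with $\mu$ incomparable to, or strictly above, $\nu$ can contribute; this is precisely what the support bound $c_{ij}^{\mu}\in\Hom_{\Uq}(M,N)^{\leq\mu}$ from the first paragraph and the maximality of $\nu$ together guarantee, after which the statement collapses to Lemma~\ref{lem-cell1}.
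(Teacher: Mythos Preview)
Your proof is correct and follows essentially the same triangularity argument as the paper: both use that $c_{ij}^{\mu}$ factors through $T_q(\mu)$ to get the support bound $(c_{ij}^{\mu})_{\nu}=0$ unless $\nu\leq\mu$, then isolate a single weight where Lemma~\ref{lem-cell1} applies. The only cosmetic difference is that the paper picks $\mu$ maximal among \emph{all} weights with nonzero coefficient and shows directly $\varphi_{\mu}\neq 0$, hence $\mu\leq\lambda$ (so every maximal, and therefore every, contributing weight is $\leq\lambda$), whereas you restrict to weights $\not\leq\lambda$ and argue by contradiction; the underlying mechanism is identical.
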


\begin{proof}
As $c_{ij}^{\mu}$ factors through $T_q(\mu)$ 
and $T_q(\mu)_{\nu}=0$ unless $\nu\leq \mu$ 
(which follows using the classification of indecomposable
$\Uq$-tilting modules), we see 
that $(c_{ij}^{\mu})_{\nu}=0$ unless $\nu\leq \mu$. Moreover, by \fullref{lem-cell1}, 
each $(c_{ij}^{\mu})_{\mu}$ is non-zero. Thus, 
$c_{ij}^{\mu}\in \Hom_{\Uq}(M,N)^{\leq \lambda}$ if and only if $\mu\leq\lambda$.
Now choose any $\varphi\in\Hom_{\Uq}(M,N)^{\leq \lambda}$. By \fullref{prop-baseshom} we may write
\begin{equation}\label{eq-maybeuseful}
\varphi=\sum_{\mu,i,j}a_{ij}^{\mu}c_{ij}^{\mu},\quad a_{ij}^{\mu}\in\K.
\end{equation}
Choose $\mu\in X^+$ maximal with the property that there 
exist $i\in\cal{I}^{\lambda},j\in\cal{J}^{\lambda}$ such that $a_{ij}^{\mu}\neq 0$.

We claim that $a_{ij}^{\nu}(c_{ij}^{\nu})_{\mu}=0$ whenever $\nu\neq \mu$. 
This is true because, as observed above, $(c_{ij}^{\nu})_{\mu}=0$ unless 
$\mu\leq \nu$, and for $\mu<\nu$ we have $a_{ij}^{\nu}=0$ by the maximality of $\mu$.
We conclude $\varphi_{\mu}=\sum_{i,j}a_{ij}^{\mu}(c_{ij}^{\mu})_{\mu}$ and thus, 
$\varphi_{\mu}\neq 0$ by \fullref{lem-cell1}. Hence, $\mu\leq \lambda$, which gives 
by \eqref{eq-maybeuseful} that 
$\varphi\in\mathrm{span}_{\K}\{c_{ij}^{\mu}\mid c_{ij}^{\mu}\in GF,\;\mu\leq \lambda
\}$ 
as desired. This shows that 
$\{c_{ij}^{\mu}\mid c_{ij}^{\mu}\in GF,\;\mu\leq \lambda\}$ 
spans $\Hom_{\Uq}(M,N)^{\leq \lambda}$.
Since it is clearly a linearly independent set, it is a basis.

The second statement follows analogously, so the details are omitted.
\end{proof}

We need the following two lemmas to prove that all 
choices in \fullref{prop-baseshom} 
lead to bases of $\Hom_{\Uq}(M,N)$. As before 
we assume that we have, as in \fullref{prop-baseshom}, 
constructed $\{g^{\lambda}_i,i\in\cal{I}^{\lambda}\}$ and the corresponding 
lifts $\overline{g}^{\lambda}_i$ for all $\lambda\in X^+$.

\begin{lem}\label{lem-cell4}
Suppose that we have other $\Uq$-homomorphisms 
$\tilde g^{\lambda}_i\colon \Tl\to N$  
such that $\tilde g^{\lambda}_i\circ\iota^{\lambda}=g^{\lambda}_i$. Then 
the following set is also a basis of $\Hom_{\Uq}(M,N)$:
\[
\{
\tilde c_{ij}^{\lambda}=\tilde g^{\lambda}_i
\circ \overline{f}^{\lambda}_j\mid \lambda\in X^+,\; i\in\cal{I}^{\lambda}, j\in\cal{J}^{\lambda}
\}.
\hspace{4.225cm}
\makeqed
\hspace{-4.225cm}
\]
\end{lem}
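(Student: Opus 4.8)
The plan is to show that the proposed new spanning set $\{\tilde c_{ij}^{\lambda}\}$ is obtained from the basis $GF=\{c_{ij}^{\lambda}\}$ of Proposition~\ref{prop-baseshom} by a change of coordinates that is unitriangular with respect to the partial order on $X^+$, and then to conclude via Lemma~\ref{lem-cell3}. The only structural input needed is that $\Tl$ has $\lambda$ as its unique maximal weight and $\Tl_\lambda\cong\K$ (Proposition~\ref{prop-tiltingclass}), together with $\Dl_\lambda\cong\K$ by construction.

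First I would set $h_i^{\lambda}:=\tilde g_i^{\lambda}-\overline{g}_i^{\lambda}\colon\Tl\to N$. Since $\tilde g_i^{\lambda}\circ\iota^{\lambda}=g_i^{\lambda}=\overline{g}_i^{\lambda}\circ\iota^{\lambda}$, the map $h_i^{\lambda}$ vanishes on $\iota^{\lambda}(\Dl)$ and hence factors through $\mathrm{coker}(\iota^{\lambda})=\Tl/\iota^{\lambda}(\Dl)$. Next I would check that all $\Uq$-weights of $\mathrm{coker}(\iota^{\lambda})$ are strictly less than $\lambda$: by Proposition~\ref{prop-tiltingclass} we have $\Tl_\mu=0$ unless $\mu\leq\lambda$ and $\Tl_\lambda\cong\K$, and since $\iota^{\lambda}$ is injective with $\Dl_\lambda\cong\K$, it restricts to an isomorphism $\Dl_\lambda\xrightarrow{\ \sim\ }\Tl_\lambda$ on top weight spaces; therefore $\mathrm{coker}(\iota^{\lambda})_\lambda=0$, leaving only weights $\mu<\lambda$. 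Consequently
\[
\tilde c_{ij}^{\lambda}-c_{ij}^{\lambda}=h_i^{\lambda}\circ\overline{f}_j^{\lambda}\colon M\longrightarrow\Tl\longrightarrow\mathrm{coker}(\iota^{\lambda})\longrightarrow N
\]
factors through a $\Uq$-module all of whose weights are $<\lambda$, so $(\tilde c_{ij}^{\lambda}-c_{ij}^{\lambda})_\mu=0$ unless $\mu<\lambda$; that is, $\tilde c_{ij}^{\lambda}-c_{ij}^{\lambda}\in\Hom_{\Uq}(M,N)^{<\lambda}$.

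To finish, I would invoke Lemma~\ref{lem-cell3}, which says that $\Hom_{\Uq}(M,N)^{<\lambda}$ is spanned by $\{c_{i'j'}^{\mu}\mid c_{i'j'}^{\mu}\in GF,\ \mu<\lambda\}$. Hence
\[
\tilde c_{ij}^{\lambda}=c_{ij}^{\lambda}+\sum_{\mu<\lambda}\ \sum_{i',j'}a_{i'j'}^{\mu}\,c_{i'j'}^{\mu},\qquad a_{i'j'}^{\mu}\in\K .
\]
Ordering the index set $\{(\lambda,i,j)\mid\lambda\in X^+,\ i\in\cal{I}^{\lambda},\ j\in\cal{J}^{\lambda}\}$ by any total order refining the partial order on the $\lambda$-component, the (square) matrix expressing the $\tilde c_{ij}^{\lambda}$ in terms of the $c_{ij}^{\lambda}$ is lower unitriangular, hence invertible over $\K$. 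Since $GF$ is a basis of $\Hom_{\Uq}(M,N)$ by Proposition~\ref{prop-baseshom}, so is $\{\tilde c_{ij}^{\lambda}\}$. I do not anticipate a genuine obstacle here; the one point requiring care is the weight computation for $\mathrm{coker}(\iota^{\lambda})$, namely that $\iota^{\lambda}$ is onto on the top weight space so that the cokernel really has weights $<\lambda$ and not merely $\leq\lambda$ — this is precisely where $\dim\Dl_\lambda=1$ and Proposition~\ref{prop-tiltingclass} enter. The companion statement with the roles of the $f$'s and $g$'s exchanged (needed for Lemma~\ref{lem-cell5}) follows by the dual argument, using $\ker(\pi^{\lambda})$ in place of $\mathrm{coker}(\iota^{\lambda})$.
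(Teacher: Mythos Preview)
Your proof is correct and follows essentially the same route as the paper: observe that $(\tilde g_i^{\lambda}-\overline{g}_i^{\lambda})\circ\iota^{\lambda}=0$, deduce that the difference lies in $\Hom_{\Uq}(\Tl,N)^{<\lambda}$ and hence $\tilde c_{ij}^{\lambda}-c_{ij}^{\lambda}\in\Hom_{\Uq}(M,N)^{<\lambda}$, then invoke Lemma~\ref{lem-cell3} to get a unitriangular change-of-basis matrix. The paper compresses your cokernel and weight-space discussion into a single line, but the content is identical.
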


\begin{proof}
As $(\overline{g}^{\lambda}_i-\tilde g^{\lambda}_i)\circ\iota^{\lambda}=0$, we see 
that $\overline{g}^{\lambda}_i-\tilde g^{\lambda}_i\in \Hom_{\Uq}(\Tl,N)^{< \lambda}$. 
Hence, we 
have $c_{ij}^{\lambda}-\tilde c_{ij}^{\lambda}\in \Hom_{\Uq}(M,N)^{< \lambda}$. Thus, 
by \fullref{lem-cell3}, there is a unitriangular 
change-of-basis matrix between $\{c_{ij}^{\lambda}\}_{\lambda,i,j}$ and $\{\tilde c_{ij}^{\lambda}\}_{\lambda,i,j}$.
\end{proof}

Now assume that we have chosen another basis 
$\{h^{\lambda}_i\mid i\in\cal{I}^{\lambda}\}$ of the spaces $\Hom_{\Uq}(\Dl,N)$ 
for each $\lambda\in X^+$ and the corresponding 
lifts $\overline{h}^{\lambda}_i$ as well.

\begin{lem}\label{lem-cell5}
The following set is also a basis of $\Hom_{\Uq}(M,N)$:
\[
\{
d_{ij}^{\lambda}=\overline{h}^{\lambda}_i
\circ \overline{f}^{\lambda}_j\mid \lambda\in X^+,\; i\in\cal{I}^{\lambda}, j\in\cal{J}^{\lambda}
\}.
\hspace{4.2cm}
\makeqed
\hspace{-4.2cm}
\]
\end{lem}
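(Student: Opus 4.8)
The plan is to reduce this to Lemma~\ref{lem-cell4} by first absorbing the change of basis that happens on the target side. Since $\{g^\lambda_i\mid i\in\cal{I}^\lambda\}$ and $\{h^\lambda_i\mid i\in\cal{I}^\lambda\}$ are both bases of $\Hom_{\Uq}(\Dl,N)$, for each $\lambda\in X^+$ there is an invertible matrix $A^\lambda=(a^\lambda_{ki})_{k,i\in\cal{I}^\lambda}$ over $\K$ with $h^\lambda_i=\sum_{k\in\cal{I}^\lambda}a^\lambda_{ki}g^\lambda_k$ for all $i\in\cal{I}^\lambda$. Note that $A^\lambda$ depends only on $\lambda$ and not on any $j\in\cal{J}^\lambda$, since $\Hom_{\Uq}(\Dl,N)$ does not involve $M$.

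First I would set $\hat g^\lambda_i=\sum_{k\in\cal{I}^\lambda}a^\lambda_{ki}\overline g^\lambda_k\colon\Tl\to N$. A one-line computation using $\overline g^\lambda_k\circ\iota^\lambda=g^\lambda_k$ gives $\hat g^\lambda_i\circ\iota^\lambda=h^\lambda_i$, so $\hat g^\lambda_i$ is a lift of $h^\lambda_i$. Putting $\hat c^\lambda_{ij}=\hat g^\lambda_i\circ\overline f^\lambda_j=\sum_{k\in\cal{I}^\lambda}a^\lambda_{ki}c^\lambda_{kj}$, I observe that for each fixed $\lambda\in X^+$ and $j\in\cal{J}^\lambda$ the family $\{\hat c^\lambda_{ij}\mid i\in\cal{I}^\lambda\}$ is obtained from $\{c^\lambda_{kj}\mid k\in\cal{I}^\lambda\}$ by applying the invertible matrix $A^\lambda$. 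Since these families, over all $\lambda\in X^+$ and $j\in\cal{J}^\lambda$, partition the basis $GF$ of $\Hom_{\Uq}(M,N)$ from Proposition~\ref{prop-baseshom}, they assemble into a block-diagonal, hence invertible, change of basis; thus $\{\hat c^\lambda_{ij}\}$ is again a basis of $\Hom_{\Uq}(M,N)$.

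Second, I would compare the chosen lift $\overline h^\lambda_i$ with $\hat g^\lambda_i$: both satisfy $(\,\cdot\,)\circ\iota^\lambda=h^\lambda_i$, so $\overline h^\lambda_i-\hat g^\lambda_i$ vanishes on $\iota^\lambda(\Dl)$ and therefore factors through $\mathrm{coker}(\iota^\lambda)$, whose $\Uq$-weights are all strictly less than $\lambda$ by Proposition~\ref{prop-tiltingclass}. Exactly as in the proof of Lemma~\ref{lem-cell4} this shows $\overline h^\lambda_i-\hat g^\lambda_i\in\Hom_{\Uq}(\Tl,N)^{<\lambda}$, hence $d^\lambda_{ij}-\hat c^\lambda_{ij}\in\Hom_{\Uq}(M,N)^{<\lambda}$. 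By Lemma~\ref{lem-cell3} the passage from the basis $\{\hat c^\lambda_{ij}\}$ to $\{d^\lambda_{ij}\}$ is then given by a unitriangular matrix with respect to the partial order on $X^+$, so $\{d^\lambda_{ij}\}$ is a basis of $\Hom_{\Uq}(M,N)$ as claimed. (Equivalently, once $\{\hat c^\lambda_{ij}\}$ is known to be a basis, Lemma~\ref{lem-cell4} applies verbatim with $h^\lambda_i,\hat g^\lambda_i$ playing the roles of $g^\lambda_i,\overline g^\lambda_i$.) The only delicate point is the bookkeeping in the first step — checking that $A^\lambda$ is independent of $j$ and acts on the correct index, so that the per-$(\lambda,j)$ transformations genuinely combine into an invertible transformation of all of $GF$; everything else is a direct reuse of Lemmata~\ref{lem-cell3} and~\ref{lem-cell4}.
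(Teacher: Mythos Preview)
Your proof is correct and follows essentially the same approach as the paper, just with the two steps in reverse order and the change-of-basis written in the opposite direction. The paper expresses $g_i^{\lambda}=\sum_k b^{\lambda}_{ik}h_k^{\lambda}$, sets $\tilde g_i^{\lambda}=\sum_k b^{\lambda}_{ik}\overline h_k^{\lambda}$ (a new lift of $g_i^{\lambda}$), applies Lemma~\ref{lem-cell4} directly to conclude that $\{\tilde g_i^{\lambda}\circ\overline f_j^{\lambda}\}$ is a basis, and then observes that this basis is related to $\{d_{ij}^{\lambda}\}$ by the invertible block matrix $(b^{\lambda}_{ik})$; you instead build $\hat g_i^{\lambda}$ from the $\overline g_k^{\lambda}$'s, get the block change first, and then rerun the argument of Lemma~\ref{lem-cell4}. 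One small remark: when you invoke Lemma~\ref{lem-cell3} for the basis $\{\hat c^{\lambda}_{ij}\}$ rather than $\{c^{\lambda}_{ij}\}$, this is legitimate precisely because your change of basis is block-diagonal in $\lambda$, so $\{\hat c^{\mu}_{ij}\mid\mu<\lambda\}$ still spans $\Hom_{\Uq}(M,N)^{<\lambda}$---you noted this implicitly, but it is the one step that deserves a sentence.
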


\begin{proof}
Write $g_i^{\lambda}=\sum_{k=1}^{(N:\Nl)}b^{\lambda}_{ik}h_k^{\lambda}$ 
with $b^{\lambda}_{ik}\in\K$ and 
set $\tilde{g}_i^{\lambda}=\sum_{k=1}^{(N:\Nl)}b^{\lambda}_{ik}\overline{h}_k^{\lambda}$. 
Then 
the $\tilde{g}_i^{\lambda}$'s are lifts of the $g_i^{\lambda}$'s. 
Hence, by \fullref{lem-cell4}, 
the elements $\tilde{g}_i^{\lambda}\circ\overline{f}^{\lambda}_j$ 
form a basis of $\Hom_{\Uq}(M,N)$. Thus, this proves the lemma, 
since, 
by construction, $\{d_{ij}^{\lambda}\}_{\lambda,i,j}$ is related 
to this basis by the 
invertible change-of-basis matrix $(b_{ik}^{\lambda})_{i,k=1;\lambda\in X^+}^{(N:\Nl)}$.
\end{proof}

In total, we established \fullref{prop-baseshom}.

\subsection{Cellular structures on endomorphism algebras of \texorpdfstring{$\Uq$}{Uq}-tilting modules}\label{sub-celltilting}

This subsection finally contains the statement and proof of our main theorem. 
We keep on working 
over a field $\K$ instead of a ring as for example Graham and Lehrer \cite{gl} do. 
(This avoids technicalities, e.g. the theory of indecomposable $\Uq$-tilting 
modules over rings is much more subtle 
than over fields. See e.g. \cite[Remark 1.7]{don}.)

\begin{defn}(\textbf{Cellular algebras.})\label{defn-cellular}
Suppose $A$ is a finite-dimensional $\K$-algebra. 
A \textit{cell datum} is an ordered quadruple 
$(\P,\Tt,\Ct,\I)$, where 
$(\P,\leq)$ is a finite poset, 
$\Ttt$ is a finite set for all 
$\lambda \in\P$, $\I$ is a $\K$-linear 
anti-involution of $A$ and $\Ct$ is an injection
\[
\Ct\colon\coprod_{\lambda\in\P}\Ttt\times \Ttt\to A,\;(i,j)\mapsto c^{\lambda}_{ij}.
\]
The whole data should be such that the $c^{\lambda}_{ij}$'s form a basis of 
$A$ with $\I(c^{\lambda}_{ij})=c^{\lambda}_{ji}$ 
for all $\lambda\in\P$ and all $i,j\in\Ttt$. 
Moreover, for all $a\in A$ and all $\lambda\in\P$ we have
\begin{align}\label{eq-cell1}
ac^{\lambda}_{ij}=\sum_{k\in\Ttt}r_{ik}(a)c^{\lambda}_{kj}\;(\mathrm{mod}\;A^{<\lambda})\quad\text{for all }i,j\in\Ttt.
\end{align}
Here $A^{<\lambda}$ is the subspace of $A$ spanned by the set 
$\{c^{\mu}_{ij}\mid \mu<\lambda\text{ and }i,j\in\Tt(\mu)\}$ and the 
scalars $r_{ik}(a)\in\K$ 
are supposed to be independent of $j$.

An algebra $A$ with such a quadruple is called a \textit{cellular algebra} and 
the $c^{\lambda}_{ij}$ are called a \textit{cellular basis} of $A$ 
(with respect to the $\K$-linear 
anti-involution $\I$).
\end{defn}

Let us fix $T\in\T$ in the following. We will now construct
cellular bases of $\Endrr$ in the semisimple as well as in the non-semisimple case.

To this end, we need to specify the cell datum. Set
\[
(\P,\leq)=(\{\lambda\in X^+\mid (T:\Nl)=(T:\Dl)\neq 0\},\leq),
\]
where $\leq$ is the usual partial ordering on $X^+$, see 
at the beginning of \fullref{sub-quantumgroups}. 
Note that $\P$ is finite since $T$ is finite-dimensional.
Moreover, motivated by \fullref{thm-baseshom2}, for each $\lambda\in\P$ define
$\Ttt=\{1,\dots,(T:\Nl)\}=\{1,\dots,(T:\Dl)\}=\cal{J}^{\lambda}$.

Recalling that we write $\mathrm{i}(\cdot)=\cal{D}(\cdot)$ (for 
$\cal{D}$ being the duality functor 
from \fullref{sub-quantumgroups} 
that exchanges Weyl and dual Weyl $\Uq$-modules 
and fixes all $\Uq$-tilting modules), 
the assignment
$\I\colon\Endrr\to\Endrr,\phi\mapsto\cal{D}(\phi)$
is clearly a $\K$-linear anti-involution. 
Choose any basis 
$G^{\lambda}$ 
of $\Hom_{\Uq}(\Dl,T)$ as above and 
any lifts $\overline{g}_i^{\lambda}$. Then $\I(G^{\lambda})$ is a basis of 
$\Hom_{\Uq}(T,\Nl)$ and $\I(\overline{g}_i^{\lambda})$ is a 
lift of $\I(g_i^{\lambda})$. 
By \fullref{cor-baseshom2} we see that
\[
\{
c_{ij}^{\lambda}=\overline{g}_i^{\lambda}\circ\mathrm{i}(\overline{g}_j^{\lambda})
=\overline{g}_i^{\lambda}\circ \overline{f}_j^{\lambda}\mid \lambda\in\P,\; i,j\in \Ttt
\}
\]
is a basis of $\Endrr$. 
Finally let $\Ct\colon\Ttt\times\Ttt\to\Endrr$ be given by $(i,j)\mapsto c_{ij}^{\lambda}$.

Now we are ready to state and prove our main theorem.

\begin{thm}(\textbf{A cellular basis for $\Endrr$.})\label{thm-cell}
The quadruple $(\P,\Tt,\Ct,\I)$ defined above is 
a cell datum for $\Endrr$.\makeqed
\end{thm}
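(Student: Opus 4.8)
We need to verify the three axioms of a cell datum from Definition~\ref{defn-cellular}: (1) the $c_{ij}^{\lambda}$ form a basis of $\Endrr$; (2) $\I(c_{ij}^{\lambda}) = c_{ji}^{\lambda}$; and (3) the straightening relation~\eqref{eq-cell1} holds modulo $\Endrr^{<\lambda}$. Item (1) is already done: it is exactly Corollary~\ref{cor-baseshom2} applied to $M = N = T$. So the real content is (2) and (3).

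\emph{The anti-involution axiom.} For (2), recall that $c_{ij}^{\lambda} = \overline{g}_i^{\lambda}\circ\overline{f}_j^{\lambda}$ where $\overline{f}_j^{\lambda} = \I(\overline{g}_j^{\lambda})$ by our choice. Since $\I(\cdot) = \cal{D}(\cdot)$ is a contravariant functor that squares to the identity on $\T$ (Corollary~\ref{cor-tiltingext3}), we compute $\I(c_{ij}^{\lambda}) = \I(\overline{g}_i^{\lambda}\circ\overline{f}_j^{\lambda}) = \I(\overline{f}_j^{\lambda})\circ\I(\overline{g}_i^{\lambda}) = \overline{g}_j^{\lambda}\circ\I(\overline{g}_i^{\lambda}) = \overline{g}_j^{\lambda}\circ\overline{f}_i^{\lambda} = c_{ji}^{\lambda}$. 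Here I use that $\I(\overline{f}_j^{\lambda}) = \I(\I(\overline{g}_j^{\lambda})) = \overline{g}_j^{\lambda}$. One small point worth checking: $\cal{D}(\Tl) \cong \Tl$ only up to a chosen isomorphism, so strictly speaking $\I$ should be set up using a fixed identification $\cal{D}(T)\cong T$; with that fixed, the computation above is literal. I would remark that this is the only place the self-duality of tilting modules is used essentially.

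\emph{The straightening relation.} This is the main obstacle, though it is more bookkeeping than depth. Fix $\lambda\in\P$ and $a\in\Endrr$. Consider $a\circ c_{ij}^{\lambda} = a\circ\overline{g}_i^{\lambda}\circ\overline{f}_j^{\lambda}$. The key observation is that $a\circ\overline{g}_i^{\lambda}\colon\Tl\to T$ is a $\Uq$-homomorphism, and precomposing with $\iota^{\lambda}\colon\Dl\hookrightarrow\Tl$ gives $a\circ\overline{g}_i^{\lambda}\circ\iota^{\lambda} = a\circ g_i^{\lambda}\in\Hom_{\Uq}(\Dl,T)$, which we can expand in the basis $G^{\lambda}$: $a\circ g_i^{\lambda} = \sum_{k\in\Ttt}r_{ik}(a)\,g_k^{\lambda}$ for scalars $r_{ik}(a)\in\K$ depending only on $a$, $\lambda$, $i$, $k$ — crucially \emph{not} on $j$. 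Then $\tilde g_i^{\lambda} := a\circ\overline{g}_i^{\lambda}$ and $\sum_k r_{ik}(a)\overline{g}_k^{\lambda}$ are two lifts of the \emph{same} map $\sum_k r_{ik}(a) g_k^{\lambda}$, so their difference vanishes on $\iota^{\lambda}(\Dl)$, hence factors through $\mathrm{coker}(\iota^{\lambda})$, whose weights are all $<\lambda$. Therefore $\tilde g_i^{\lambda} - \sum_k r_{ik}(a)\overline{g}_k^{\lambda}\in\Hom_{\Uq}(\Tl,T)^{<\lambda}$, and composing on the right with $\overline{f}_j^{\lambda}$ gives
\[
a\circ c_{ij}^{\lambda} - \sum_{k\in\Ttt}r_{ik}(a)\,c_{kj}^{\lambda}\in\Hom_{\Uq}(T,T)^{<\lambda}.
\]
By Lemma~\ref{lem-cell3} (with $M=N=T$), $\Hom_{\Uq}(T,T)^{<\lambda}$ is spanned by the $c_{k'l'}^{\mu}$ with $\mu<\lambda$, i.e. it equals $\Endrr^{<\lambda}$. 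This is precisely~\eqref{eq-cell1}, with the required $j$-independence built in. That $\P$ is finite and $\leq$ is a genuine partial order is immediate. Assembling (1), (2), (3) finishes the proof that $(\P,\Tt,\Ct,\I)$ is a cell datum.
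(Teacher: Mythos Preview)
Your proof is correct and follows essentially the same route as the paper's own argument: the basis property via Corollary~\ref{cor-baseshom2}, the anti-involution axiom via contravariance of $\cal{D}(\cdot)$, and the straightening relation by expanding $a\circ g_i^{\lambda}$ in the basis $G^{\lambda}$ and observing that the difference of lifts lies in $\Hom_{\Uq}(\Tl,T)^{<\lambda}$. Your explicit invocation of Lemma~\ref{lem-cell3} to identify $\Hom_{\Uq}(T,T)^{<\lambda}$ with the cellular ideal $\Endrr^{<\lambda}$ is a nice touch that the paper leaves implicit.
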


\begin{proof}
As mentioned above, the sets $\P$ and $\Ttt$ are 
finite for all $\lambda\in\P$. Moreover, $\I$ is a $\K$-linear anti-involution of $\Endrr$ 
and the $c_{ij}^{\lambda}$'s form a 
basis of $\Endrr$ by \fullref{cor-baseshom2}.
Because the functor $\cal{D}(\cdot)$ is contravariant, we see that
\[
\mathrm{i}(c_{ij}^{\lambda})=\I(\overline{g}_i^{\lambda}\circ\I(\overline{g}_j^{\lambda}))
=\overline{g}_j^{\lambda}\circ\I(\overline{g}_i^{\lambda})=c_{ji}^{\lambda}.
\]

Thus, only the condition \eqref{eq-cell1} remains to be proven.
For this purpose, let $\varphi\in\Endrr$. Since $\varphi\circ\overline{g}^{\lambda}_i\circ\iota^{\lambda}=\varphi\circ g^{\lambda}_i\in\Hom_{\Uq}(\Dl,T)$, we have coefficients $r^{\lambda}_{ik}(\varphi)\in\K$ such that
\begin{equation}\label{eq-action}
\varphi\circ g^{\lambda}_i=\sum_{k\in\Ttt} r^{\lambda}_{ik}(\varphi)g_k^{\lambda},
\end{equation}
because we know that the $g_i^{\lambda}$'s form a basis of $\Hom_{\Uq}(\Dl,T)$.
But this implies then that $\varphi\circ\overline{g}^{\lambda}_i-\sum_{k\in\Ttt} r^{\lambda}_{ik}(\varphi)\overline{g}_k^{\lambda}\in\Hom_{\Uq}(\Tl,T)^{<\lambda}$, so that
\[
\varphi\circ\overline{g}^{\lambda}_i\circ \overline{f}^{\lambda}_j-\sum_{k\in\Ttt} r^{\lambda}_{ik}(\varphi)\overline{g}_k^{\lambda}\circ \overline{f}^{\lambda}_j\in\Hom_{\Uq}(T,T)^{<\lambda}=\End_{\Uq}(T)^{<\lambda},
\]
which proves \eqref{eq-cell1}. The theorem follows.
\end{proof}
\section{The cellular structure and \texorpdfstring{$\Mod{\Endrr}$}{EndMod}}\label{sec-endoringsrep}
The goal of this section is to present the 
representation theory of cellular algebras 
for $\Endrr$ from the viewpoint of $\Uq$-tilting theory.  
In fact, most of the results in this section 
are not new and have been proved for general 
cellular algebras, see e.g. \cite[Section 3]{gl}. 
However, they take a nice and easy form 
in our setup. The last theorem, the 
semisimplicity criterion 
from \fullref{thm-cellsemisimple}, is new and has 
potentially many applications, see e.g. \cite{ast}.

\subsection{Cell modules for \texorpdfstring{$\Endrr$}{EndUq(T)}}\label{sub-cellmodules}

We study now the representation theory 
for $\Endrr$ via the cellular structure we have found for it. 
We denote its module 
category by $\Mod{\Endrr}$.

\begin{defn}(\textbf{Cell modules.})\label{defn-cellmod}
Let $\lambda\in\P$. The \textit{cell module} associated to $\lambda$ is the 
left $\Endrr$-module given by 
$
C(\lambda)=\Hom_{\Uq}(\Dl,T).
$
The right $\Endrr$-module given by 
$
C(\lambda)^*=\Hom_{\Uq}(T,\Nl)
$
is called the \textit{dual cell module} associated to $\lambda$.
\end{defn}

The link to the definition of cell modules from \cite[Definition 2.1]{gl} is 
given via our choice of basis
$\{g_i^{\lambda}\}_{i\in\Ttt}$. In this 
basis the action of $\Endrr$ on $C(\lambda)$ is given by
\begin{equation}\label{eq-action1}
\varphi\circ g^{\lambda}_i=\sum_{k\in\Ttt} r^{\lambda}_{ik}(\varphi)g_k^{\lambda},\quad \varphi\in\Endrr,
\end{equation}
see \eqref{eq-action}. Here the coefficients are the 
same as those appearing when we consider the left action of $\Endrr$ on 
itself in terms of the cellular basis 
$\{c_{ij}^{\lambda}\}^{\lambda\in\P}_{i,j\in\Ttt}$, that is,
\begin{equation}\label{eq-action2}
\varphi\circ c^{\lambda}_{ij}=\sum_{k\in\Ttt} r^{\lambda}_{ik}(\varphi)c_{kj}^{\lambda}\;(\mathrm{mod}\;\Endrr^{<\lambda}),\quad \varphi\in\Endrr.
\end{equation}

In a completely similar fashion: the dual cell module $C(\lambda)^*$ has a basis 
consisting of $\{f_j^{\lambda}\}_{j\in\Ttt}$ with 
$f_j^{\lambda}=\I(g_j^{\lambda})$. In this basis the right action of 
$\Endrr$ is given via
\begin{equation}\label{eq-action3}
f^{\lambda}_j\circ \varphi=\sum_{k\in\Ttt} r^{\lambda}_{kj}(\I(\varphi))f_k^{\lambda},\quad \varphi\in\Endrr.
\end{equation}

We can use the unique $\Uq$-homomorphism from \eqref{eq-uniquemap} 
and the duality functor $\cal{D}(\cdot)$
to define the following \textit{cellular pairing} in the spirit of 
Graham and Lehrer \cite[Definition 2.3]{gl}.

\begin{defn}(\textbf{Cellular pairing.})\label{defn-cellpair}
Let $\lambda\in\P$. Then we denote by $\vartheta^{\lambda}$ the $\K$-bilinear form 
$\vartheta^{\lambda}\colon C(\lambda)\otimes C(\lambda)\to \K$ determined by the property
\[
\I(h)\circ g=\vartheta^{\lambda}(g,h)c^{\lambda},\quad g,h\in C(\lambda)=\Hom_{\Uq}(\Dl,T).
\]
We call $\vartheta^{\lambda}$ the \textit{cellular pairing} associated to $\lambda\in\P$.
\end{defn}

\begin{lem}\label{lem-thethird}
The cellular pairing 
$\vartheta^{\lambda}$ is well-defined, symmetric and contravariant.\makeqed
\end{lem}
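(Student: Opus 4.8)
The plan is to extract all three properties of $\vartheta^{\lambda}$ straight from its defining relation $\I(h)\circ g=\vartheta^{\lambda}(g,h)\,c^{\lambda}$, using essentially only the Ext-vanishing theorem and the formal behaviour of the duality $\cal D$. For well-definedness: given $g,h\in C(\lambda)=\Hom_{\Uq}(\Dl,T)$, the element $\I(h)=\cal D(h)$ lies in $\Hom_{\Uq}(\cal D(T),\cal D(\Dl))=\Hom_{\Uq}(T,\Nl)$ after the identifications $\cal D(T)\cong T$ (Corollary~\ref{cor-tiltingext3}) and $\cal D(\Dl)\cong\Nl$, so $\I(h)\circ g\in\Hom_{\Uq}(\Dl,\Nl)$. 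By Theorem~\ref{thm-extfunctor} this space is one-dimensional, spanned by $c^{\lambda}$, so there is a unique scalar $\vartheta^{\lambda}(g,h)$ as required; bilinearity is then immediate, since composition of $\Uq$-homomorphisms is $\K$-bilinear and $\I$ is $\K$-linear.

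For symmetry I would apply $\I$ to the defining relation: using that $\I$ is contravariant, $\I(\alpha\circ\beta)=\I(\beta)\circ\I(\alpha)$, and that $\I^2=\mathrm{id}$, one gets $\I(g)\circ h=\I(\I(h)\circ g)=\vartheta^{\lambda}(g,h)\,\I(c^{\lambda})$, whereas by definition $\I(g)\circ h=\vartheta^{\lambda}(h,g)\,c^{\lambda}$. So symmetry reduces to the one genuinely non-formal point, namely $\I(c^{\lambda})=c^{\lambda}$. Since $\Hom_{\Uq}(\Dl,\Nl)$ is one-dimensional, $\cal D(c^{\lambda})=\epsilon\,c^{\lambda}$ for some nonzero $\epsilon\in\K$, and applying $\cal D$ once more forces $\epsilon^{2}=1$ via $\cal D^{2}\cong\mathrm{id}$. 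That $\epsilon=1$ follows from factoring $c^{\lambda}=\pi^{\lambda}\circ\iota^{\lambda}$ as in Remark~\ref{rem-new}: the maps $\cal D(\iota^{\lambda})$ and $\cal D(\pi^{\lambda})$ must be scalar multiples $a\pi^{\lambda}$ and $b\iota^{\lambda}$, because $\Hom_{\Uq}(\Tl,\Nl)$ and $\Hom_{\Uq}(\Dl,\Tl)$ are one-dimensional by Corollary~\ref{cor-extfunctor}, and $ab=1$ again by $\cal D^{2}\cong\mathrm{id}$, so $\cal D(c^{\lambda})=\cal D(\iota^{\lambda})\circ\cal D(\pi^{\lambda})=ab\,c^{\lambda}=c^{\lambda}$. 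With $\I(c^{\lambda})=c^{\lambda}$ in hand we conclude $\vartheta^{\lambda}(g,h)=\vartheta^{\lambda}(h,g)$.

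For contravariance, recall that the left $\Endrr$-action on $C(\lambda)$ is by post-composition, $\varphi\cdot g=\varphi\circ g$ (see~\eqref{eq-action1}). Then for $\varphi\in\Endrr$ and $g,h\in C(\lambda)$, using associativity of composition and $\I(h)\circ\varphi=\I(h)\circ\I(\I(\varphi))=\I(\I(\varphi)\circ h)$ together with $\I(\varphi)\circ h\in C(\lambda)$, I compute
\[
\vartheta^{\lambda}(\varphi\cdot g,h)\,c^{\lambda}=\I(h)\circ(\varphi\circ g)=\big(\I(h)\circ\varphi\big)\circ g=\I\!\big(\I(\varphi)\circ h\big)\circ g=\vartheta^{\lambda}\big(g,\I(\varphi)\cdot h\big)\,c^{\lambda},
\]
hence $\vartheta^{\lambda}(\varphi\cdot g,h)=\vartheta^{\lambda}(g,\I(\varphi)\cdot h)$, which is exactly the contravariance demanded of a cellular pairing. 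In summary, the only nontrivial input is Theorem~\ref{thm-extfunctor} (the one-dimensionality of $\Hom_{\Uq}(\Dl,\Nl)$), and the sole delicate bookkeeping step — the ``main obstacle'' of an otherwise routine argument — is pinning down the normalization $\I(c^{\lambda})=c^{\lambda}$.
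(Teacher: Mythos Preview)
Your proof is correct and follows essentially the same approach as the paper: well-definedness from the one-dimensionality of $\Hom_{\Uq}(\Dl,\Nl)$, symmetry by applying $\I$ to the defining relation and invoking $\I(c^{\lambda})=c^{\lambda}$, and contravariance from the contravariance of $\cal D$. The only difference is that you supply an argument for $\I(c^{\lambda})=c^{\lambda}$ via the factorization $c^{\lambda}=\pi^{\lambda}\circ\iota^{\lambda}$, whereas the paper simply asserts this equality.
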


\begin{proof}
That $\vartheta^{\lambda}$ is well-defined follows directly 
from the uniqueness of $c^{\lambda}$.

Applying $\I$ to the defining equation of $\vartheta^{\lambda}$ gives
\[
\vartheta^{\lambda}(g,h)\I(c^{\lambda})=\I(\vartheta^{\lambda}(g,h)c^{\lambda})=\I(\I(h)\circ g)=\I(g)\circ h=\vartheta^{\lambda}(h,g)c^{\lambda},
\]
and thus, $\vartheta^{\lambda}(g,h)=\vartheta^{\lambda}(h,g)$, 
because $c^{\lambda}=\I(c^{\lambda})$. 
(Recall that 
$c^{\lambda}\colon\Dl\to\Nl$ is unique up to scalars. 
Hence, we can fix scalars accordingly such 
that $c^{\lambda}=\I(c^{\lambda})$.) Similarly, 
contravariance of $\cal{D}(\cdot)$ gives
\[
\vartheta^{\lambda}(\varphi\circ g,h)=\vartheta^{\lambda}(g,\I(\varphi)\circ h),\quad\varphi\in\Endrr,\; g,h\in C(\lambda),
\]
which shows contravariance of the cellular pairing.
\end{proof}

\begin{prop}\label{prop-thefirst}
Let $\lambda\in\P$. Then $\Tl$ is a summand of $T$ if and only if $\vartheta^{\lambda}\neq 0$.\makeqed
\end{prop}

\begin{proof}
(See also \cite[Proposition 1.5]{an0}.)
Assume $T\cong \Tl\oplus \text{rest}$. We denote by 
$\overline{g}\colon \Tl\to T$ and by $\overline{f}\colon T\to\Tl$ the 
corresponding inclusion and projection respectively. As usual, set 
$g=\overline{g}\circ\iota^{\lambda}$ and $f=\pi^{\lambda}\circ\overline{f}$. Then we have
$
f\circ g\colon \Dl\hookrightarrow \Tl\hookrightarrow 
T\twoheadrightarrow\Tl\twoheadrightarrow\Nl=c^{\lambda}
$ (mapping head to socle),
giving $\vartheta^{\lambda}(g,\I(f))=1$. This shows that $\vartheta^{\lambda}\neq 0$.

Conversely, assume that there exist $g,h\in C(\lambda)$ with $\vartheta^{\lambda}(g,h)\neq 0$. 
Then the commuting ``bow tie diagram'', i.e.
\[
\begin{gathered}
\begin{xy}
  \xymatrix{
  	    \Dl \ar[rd]^{g}\ar@{^{(}->}[d]_{\iota^{\lambda}}  &    &  \\
      \Tl  \ar@{.>}[r]_{\overline{g}}    &   T \ar@{.>}[r]^/-0.5em/{\overline{\I(h)}}\ar[rd]_{\I(h)} &  \Tl\ar@{->>}[d]^/-.35em/{\pi^{\lambda}},\\
                   &    & \Nl
  }
\end{xy}
\end{gathered}
\]
shows that $\overline{\I(h)}\circ \overline{g}$ is non-zero on the 
$\lambda$-weight space of $\Tl$, because 
$\I(h)\circ g=\vartheta^{\lambda}(g,h)c^{\lambda}$. Thus, $\overline{\I(h)}\circ \overline{g}$ 
must be an isomorphism (because $\Tl$ is indecomposable and has therefore 
only invertible or nilpotent elements in $\End_{\Uq}(\Tl)$) 
showing that $T\cong \Tl\oplus \text{rest}$.
\end{proof}

In view of \fullref{prop-thefirst}, it makes sense to study the set
\begin{equation}\label{eq-po}
\P_0=\{\lambda\in \P\mid \vartheta^{\lambda}\neq 0\}\subset\P.
\end{equation}
Hence, if $\lambda\in\P_0$, then we have 
$T\cong \Tl\oplus \text{rest}$ for some $\Uq$-tilting module 
called $\text{rest}$. Note also that $\Endrr$ is quasi-hereditary if and only if $\P=\P_0$, 
see e.g. \cite[Remark 3.10]{gl}.

\subsection{The structure of \texorpdfstring{$\Endrr$}{EndUq(T)} and its cell modules}\label{sub-boring}

Recall that, for any $\lambda\in\P$, we have that 
$\Endrr^{\leq\lambda}$ and $\Endrr^{<\lambda}$ are 
two-sided ideals in $\Endrr$ (this follows 
from \eqref{eq-cell1} and its right-handed version obtained 
by applying $\I$), 
as in any cellular algebra. 
In our case we can also see this 
as follows. If $\varphi\in\Endrr^{\leq\lambda}$, then $\varphi_{\mu}=0$ unless 
$\mu\leq\lambda$. Hence, for any $\varphi,\psi\in\Endrr$ we have
$
(\varphi\circ\psi)_{\mu}=\varphi_{\mu}
\circ\psi_{\mu}=0=\psi_{\mu}\circ\varphi_{\mu}=(\psi\circ\varphi)_{\mu}
$ unless $\mu\leq\lambda$.
As a consequence, $\Endrr^{\lambda}=\Endrr^{\leq\lambda}/\Endrr^{<\lambda}$ is an $\Endrr$-bimodule.

Recall that, for any $g\in C(\lambda)$ and any $f\in C(\lambda)^*$, we denote by 
$\overline{g}\colon \Tl\to T$ and $\overline{f}\colon T\to \Tl$ a choice of lifts 
which satisfy $\overline{g}\circ \iota^{\lambda}=g$ and 
$\pi^{\lambda}\circ \overline{f}=f$, respectively.

\begin{lem}\label{lem-thefirst}
Let $\lambda\in\P$. Then the pairing map
\[
\langle\cdot,\cdot\rangle^{\lambda}\colon C(\lambda)\otimes C(\lambda)^*\to \Endrr^{\lambda},\quad
\langle g,f\rangle^{\lambda}= \overline{g}\circ\overline{f}+\Endrr^{<\lambda},
\]
with $g\in C(\lambda),f\in C(\lambda)^*$,
is an isomorphism of $\Endrr$-bimodules.\makeqed
\end{lem}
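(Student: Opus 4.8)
The plan is to check, in turn, that $\langle\cdot,\cdot\rangle^{\lambda}$ is well defined (independent of the chosen lifts), that it is $\K$-bilinear, that it intertwines the left and right $\Endrr$-actions, and finally that it is bijective; the last point follows at once by comparing natural bases, so essentially all of the work sits in the well-definedness step.

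First I would observe that the formula lands in $\Endrr^{\lambda}$: since $\overline{g}\circ\overline{f}$ factors through $\Tl$ and $\Tl_{\mu}=0$ unless $\mu\leq\lambda$ by Proposition~\ref{prop-tiltingclass}, every component $(\overline{g}\circ\overline{f})_{\mu}$ with $\mu\not\leq\lambda$ vanishes, so $\overline{g}\circ\overline{f}\in\Endrr^{\leq\lambda}$. For independence of the lift of $g$: if $\overline{g}'$ is a second lift, then $(\overline{g}-\overline{g}')\circ\iota^{\lambda}=0$, so $\overline{g}-\overline{g}'$ factors through $\mathrm{coker}(\iota^{\lambda})$, and since $\iota^{\lambda}$ is an isomorphism on the one-dimensional $\lambda$-weight space (using $\Dl_{\lambda}\cong\K\cong\Tl_{\lambda}$) the module $\mathrm{coker}(\iota^{\lambda})$ has all weights strictly below $\lambda$; hence $(\overline{g}-\overline{g}')\circ\overline{f}$ vanishes on every $T_{\mu}$ with $\mu\not<\lambda$, i.e.\ lies in $\Endrr^{<\lambda}$. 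The same weight-space argument with $\ker(\pi^{\lambda})$ (whose weights are all $<\lambda$ because $\pi^{\lambda}$ is an isomorphism on the $\lambda$-weight space, using $\Tl_{\lambda}\cong\K\cong\Nl_{\lambda}$) in place of $\mathrm{coker}(\iota^{\lambda})$ handles a change of the lift of $f$. So $\langle\cdot,\cdot\rangle^{\lambda}$ is well defined, and $\K$-bilinearity is then immediate.

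Next I would verify the bimodule identities. Given $\varphi\in\Endrr$, the map $\varphi\circ\overline{g}$ is a lift of $\varphi\circ g\in C(\lambda)$, whence $\langle\varphi\circ g,f\rangle^{\lambda}=\varphi\circ(\overline{g}\circ\overline{f})+\Endrr^{<\lambda}=\varphi\cdot\langle g,f\rangle^{\lambda}$ by associativity of composition; dually $\overline{f}\circ\varphi$ is a lift of $f\circ\varphi\in C(\lambda)^{*}$, so $\langle g,f\circ\varphi\rangle^{\lambda}=(\overline{g}\circ\overline{f})\circ\varphi+\Endrr^{<\lambda}=\langle g,f\rangle^{\lambda}\cdot\varphi$. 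Thus $\langle\cdot,\cdot\rangle^{\lambda}$ is a homomorphism of $\Endrr$-bimodules.

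Finally, bijectivity. With the basis $\{g_{i}^{\lambda}\}_{i\in\Ttt}$ of $C(\lambda)$ and the basis $\{f_{j}^{\lambda}=\I(g_{j}^{\lambda})\}_{j\in\Ttt}$ of $C(\lambda)^{*}$ fixed when the cell datum was constructed, the tensors $g_{i}^{\lambda}\otimes f_{j}^{\lambda}$ form a basis of $C(\lambda)\otimes C(\lambda)^{*}$, and by well-definedness we may compute $\langle g_{i}^{\lambda},f_{j}^{\lambda}\rangle^{\lambda}$ using the lifts $\overline{g}_{i}^{\lambda},\overline{f}_{j}^{\lambda}$, obtaining $c_{ij}^{\lambda}+\Endrr^{<\lambda}$. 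By Lemma~\ref{lem-cell3}, $\{c_{ij}^{\mu}:\mu\leq\lambda\}$ and $\{c_{ij}^{\mu}:\mu<\lambda\}$ are bases of $\Endrr^{\leq\lambda}$ and $\Endrr^{<\lambda}$ respectively, so $\{c_{ij}^{\lambda}+\Endrr^{<\lambda}:i,j\in\Ttt\}$ is a basis of $\Endrr^{\lambda}$. Hence $\langle\cdot,\cdot\rangle^{\lambda}$ carries a basis bijectively to a basis and is therefore an isomorphism of $\Endrr$-bimodules, proving the lemma. As flagged, the only genuinely delicate point is the lift-independence check; everything after that is formal bookkeeping with Proposition~\ref{prop-tiltingclass} and Lemma~\ref{lem-cell3}.
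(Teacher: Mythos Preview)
Your proof is correct and follows essentially the same route as the paper. The paper handles well-definedness by invoking Lemma~\ref{lem-cell4} (the unitriangular change-of-basis under a change of lifts), whereas you inline the underlying weight-space argument directly; your bimodule check, observing that $\varphi\circ\overline{g}$ is itself a lift of $\varphi\circ g$, is a slightly slicker variant of the paper's coefficient comparison via~\eqref{eq-action1}--\eqref{eq-action3}, and your bijectivity step via Lemma~\ref{lem-cell3} is identical to the paper's.
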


\begin{proof}
First we note that $\overline{g}\circ\overline{f}+\Endrr^{<\lambda}$ does not 
depend on the choices for the lifts 
$\overline{f},\overline{g}$, because the change-of-basis matrix from 
\fullref{lem-cell4} is unitriangular (and works 
for swapped roles of $f$'s and $g$'s as well). 
This makes the pairing well-defined.

Note that the pairing $\langle\cdot,\cdot\rangle^{\lambda}$ takes, by birth, 
the basis $(g_i^{\lambda}\otimes f_j^{\lambda})_{i,j\in\Ttt}$ of 
$C(\lambda)\otimes C(\lambda)^*$ to the 
basis $\{c_{ij}^{\lambda}+\Endrr^{<\lambda}\}_{i,j\in\Ttt}$ 
of $\Endrr^{\lambda}$ (where the latter is a basis by \fullref{lem-cell3}).

So we only need to check that
$
\langle \varphi\circ g_i^{\lambda},f_j^{\lambda}\circ \psi\rangle^{\lambda}=\varphi\circ c_{ij}^{\lambda}\circ\psi \;(\mathrm{mod}\;\Endrr^{<\lambda})
$ for any $\varphi,\psi\in\Endrr$.
But this is a direct consequence of \eqref{eq-action1}, \eqref{eq-action2} and \eqref{eq-action3}.
\end{proof}

The next 
lemma is straightforward by \fullref{lem-thefirst}. 
Details are left to the reader.

\begin{lem}\label{lem-thesecond}
We have the following.
\begin{enumerate}[label=(\alph*)]

\item There is an isomorphism of $\K$-vector spaces 
$\Endrr\cong\bigoplus_{\lambda\in\P}\Endrr^{\lambda}$.

\item \label{lem-thesecond-b} If $\varphi\in\Endrr^{\leq\lambda}$, then we have $r_{ik}^{\mu}(\varphi)=0$ 
for all $\mu\not\leq\lambda,i,k\in\Tt(\mu)$. Equivalently, 
$\Endrr^{\leq\lambda}C(\mu)=0$ unless $\mu\leq\lambda$.\qedmake

\end{enumerate}
\end{lem}

In the following we assume that $\lambda\in\P_0$ as in \eqref{eq-po}. Define 
$m_{\lambda}$ via
\begin{equation}\label{eq-love}
T\cong \Tl^{\oplus m_{\lambda}}\oplus T^{\prime},
\end{equation}
where $T^{\prime}$ is a $\Uq$-tilting module containing no summands isomorphic to $\Tl$.

Choose now a basis of $C(\lambda)=\Hom_{\Uq}(\Dl,T)$ as follows. 
For 
$i=1,\dots,m_{\lambda}$ let $\overline{g}_i^{\lambda}$ 
be the inclusion of $\Tl$ into the $i$-th 
summand of $\Tl^{\oplus m_{\lambda}}$ and set 
$g_i^{\lambda}=\overline{g}_i^{\lambda}\circ\iota^{\lambda}$.
Then extend $\{g_1^{\lambda},\dots,g_{m_{\lambda}}^{\lambda}\}$
to a basis of the cell module $C(\lambda)$ by adding 
an arbitrary basis of 
$\Hom_{\Uq}(\Dl,T^{\prime})$. Thus, in our usual notation, we have 
$c_{ij}^{\lambda}=\overline{g}_i^{\lambda}\circ \overline{f}_j^{\lambda}$ with $\overline{f}_j^{\lambda}=\I(\overline{g}_j^{\lambda})$.

In particular, 
$\overline{f}_j^{\lambda}$ 
projects onto the $j$-th summand 
in $\Tl^{\oplus m_{\lambda}}$ for $j=1,\dots, m_{\lambda}$. Thus, 
the $c^{\lambda}_{ii}$'s for $i \leq m_\lambda$ are 
idempotents in $\Endrr$ corresponding to the $i$-th 
summand in $\Tl^{\oplus m_{\lambda}}$. 
Since $\lambda\in\P_0$ (which implies $1 \leq m_\lambda$), 
$c^{\lambda}_{11}$ is always such an idempotent.
This is crucial for the following lemma, which will play an important role 
in the proof of \fullref{prop-lastone}.

\begin{lem}\label{lem-anotherone}
In the above notation:
\begin{enumerate}[label=(\alph*)]

\item \label{lem-anotherone-a} $c_{i1}^{\lambda}\circ g_1^{\lambda}=g_i^{\lambda}$ 
for all $i\in\Ttt$,

\item $c_{ij}^{\lambda}\circ g_1^{\lambda}=0$ 
for all $i,j\in\Ttt$ with $j\neq 1$.\makeqed

\end{enumerate}
\end{lem}

\begin{proof}
We have $\overline{f}_1^{\lambda}\circ g_1^{\lambda}=
\overline{f}_1^{\lambda}\circ \overline{g}_1^{\lambda}\circ\iota^{\lambda}=\iota^{\lambda}$. 
This implies $c_{i1}^{\lambda}\circ g_1^{\lambda}=
\overline{g}_i^{\lambda}\circ\iota^{\lambda}=g_i^{\lambda}$.
Next, if $j\neq 1$, then $\overline{f}_j^{\lambda}\circ g_1^{\lambda}=0$, since
$\overline{f}_j^{\lambda}$ is zero on $\Tl$. Thus, 
$c_{ij}^{\lambda}\circ g_1^{\lambda}=0$ 
for all $i,j\in\Ttt$ with $j\neq 1$.
\end{proof}

\begin{prop}(\textbf{Homomorphism criterion.})\label{prop-lastone}
Let $\lambda\in\P_0$ and fix $M\in\Mod{\Endrr}$. Then there is an isomorphism of $\K$-vector spaces
\begin{gather}\label{eq:hom-criterion}
\Hom_{\Endrr}(C(\lambda),M)\cong\{m\in M\mid \Endrr^{<\lambda}m=0\text{ and }c_{11}^{\lambda}m=m\}.
\hspace{1.6cm}
\makeqed
\hspace{-1.6cm}
\end{gather}
\end{prop}

\begin{proof}
Let $\psi\in\Hom_{\Endrr}(C(\lambda),M)$. Then 
$\psi(g_1^{\lambda})$ belongs to the right-hand side, 
because, by \fullref{lem-thesecond-b} of 
\fullref{lem-thesecond}, we get $\Endrr^{<\lambda}C(\lambda)=0$,
and we have $c_{11}^{\lambda}\circ g_1^{\lambda}=g_1^{\lambda}$ 
by \fullref{lem-anotherone-a} of 
\fullref{lem-anotherone}.
Conversely, if $m\in M$ belongs to the right-hand side in \eqref{eq:hom-criterion}, 
then we may define 
$\psi\in\Hom_{\Endrr}(C(\lambda),M)$ by $\psi(g_i^{\lambda})=c_{i1}^{\lambda}m$, $i\in\cal{I}^{\lambda}$.
Moreover, the fact that this definition gives an $\Endrr$-homomorphism follows 
from \eqref{eq-action}, \eqref{eq-action1} and \eqref{eq-action2} via 
a direct computation, 
since $\Endrr^{<\lambda}m=0$.
Clearly these two operations are mutually inverse.
\end{proof}

\begin{cor}\label{cor-okanotherone}
Let $\lambda\in\P_0$. Then $C(\lambda)$ has a unique 
simple head, denoted by $L(\lambda)$.\makeqed
\end{cor}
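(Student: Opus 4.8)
The plan is to run Graham and Lehrer's standard argument \cite[\S3]{gl}, using the cellular pairing $\vartheta^{\lambda}$ of Definition~\ref{defn-cellpair} as the tool that detects the submodule structure of $C(\lambda)$. The backbone is the identity
\[
c_{ij}^{\lambda}\circ g_k^{\lambda}=\vartheta^{\lambda}(g_k^{\lambda},g_j^{\lambda})\,g_i^{\lambda}\in C(\lambda),\qquad i,j,k\in\Ttt,
\]
which generalizes Lemma~\ref{lem-anotherone}. To prove it I would write $c_{ij}^{\lambda}\circ g_k^{\lambda}=\overline{g}_i^{\lambda}\circ(\overline{f}_j^{\lambda}\circ g_k^{\lambda})$ and observe that $\overline{f}_j^{\lambda}\circ g_k^{\lambda}\in\Hom_{\Uq}(\Dl,\Tl)$; by Corollary~\ref{cor-extfunctor} together with Proposition~\ref{prop-tiltingclass} this space is one-dimensional and spanned by $\iota^{\lambda}$, so $\overline{f}_j^{\lambda}\circ g_k^{\lambda}$ is a scalar multiple of $\iota^{\lambda}$, and composing with $\pi^{\lambda}$ together with $\overline{f}_j^{\lambda}=\I(\overline{g}_j^{\lambda})$ identifies the scalar as $\vartheta^{\lambda}(g_k^{\lambda},g_j^{\lambda})$ via the defining relation $f_j^{\lambda}\circ g_k^{\lambda}=\vartheta^{\lambda}(g_k^{\lambda},g_j^{\lambda})c^{\lambda}$.

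Next I would set $\mathrm{rad}(\lambda)=\{g\in C(\lambda)\mid \vartheta^{\lambda}(g,h)=0\text{ for all }h\in C(\lambda)\}$, which is an $\Endrr$-submodule by the contravariance of $\vartheta^{\lambda}$ (Lemma~\ref{lem-thethird}). Since $\lambda\in\P_0$ means $\vartheta^{\lambda}\neq 0$, we have $\mathrm{rad}(\lambda)\subsetneq C(\lambda)$. The crucial step is then: every proper submodule $U\subsetneq C(\lambda)$ satisfies $U\subseteq\mathrm{rad}(\lambda)$. Indeed, if $h=\sum_k a_k g_k^{\lambda}\notin\mathrm{rad}(\lambda)$, pick $j\in\Ttt$ with $\vartheta^{\lambda}(h,g_j^{\lambda})\neq 0$; the identity above gives $c_{ij}^{\lambda}\circ h=\vartheta^{\lambda}(h,g_j^{\lambda})\,g_i^{\lambda}$ for all $i\in\Ttt$, so the submodule generated by $h$ contains every $g_i^{\lambda}$ and hence equals $C(\lambda)$.

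It follows that $\mathrm{rad}(\lambda)$ is the unique maximal submodule of $C(\lambda)$, so $L(\lambda):=C(\lambda)/\mathrm{rad}(\lambda)$ is simple and is the unique simple quotient, i.e.\ the simple head of $C(\lambda)$. There is no genuine obstacle beyond bookkeeping; the only input that is not purely formal is the one-dimensionality of $\Hom_{\Uq}(\Dl,\Tl)$ underlying the key identity, which is immediate from the structure results already proven. One could alternatively just cite \cite[Proposition~3.2]{gl} after checking that $\vartheta^{\lambda}$ coincides with the cellular pairing used there.
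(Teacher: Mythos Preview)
Your proposal is correct and follows essentially the same approach as the paper. Both define $\mathrm{Rad}(\lambda)$ as the radical of $\vartheta^{\lambda}$, observe it is a proper submodule since $\lambda\in\P_0$, and then show that any $g\notin\mathrm{Rad}(\lambda)$ generates $C(\lambda)$ by exploiting that $\Hom_{\Uq}(\Dl,\Tl)=\K\iota^{\lambda}$; your explicit identity $c_{ij}^{\lambda}\circ g_k^{\lambda}=\vartheta^{\lambda}(g_k^{\lambda},g_j^{\lambda})\,g_i^{\lambda}$ is precisely a basis-level version of the paper's computation $\overline{\I(h)}\circ g=\iota^{\lambda}$ (which in fact holds on the nose, not merely modulo lower terms, by that same one-dimensionality).
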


\begin{proof}
Set
$
\mathrm{Rad}(\lambda)=\{g\in C(\lambda)\mid \vartheta^{\lambda}(g,C(\lambda))=0\}.
$
As the cellular pairing $\vartheta^{\lambda}$ from 
\fullref{defn-cellpair} is contravariant 
by \fullref{lem-thethird}, we see that $\mathrm{Rad}(\lambda)$ is an 
$\Endrr$-submodule of $C(\lambda)$. Since $\vartheta^{\lambda}\neq 0$ for $\lambda\in\P_0$, we 
have $\mathrm{Rad}(\lambda)\subsetneq C(\lambda)$. We claim that $\mathrm{Rad}(\lambda)$ is the 
unique maximal proper $\Endrr$-submodule of $C(\lambda)$.

Let $g\in C(\lambda)-\mathrm{Rad}(\lambda)$. Moreover, choose $h\in C(\lambda)$ with 
$\vartheta^{\lambda}(g,h)=1$. Then $\I(h)\circ g=c^{\lambda}$ so that 
$\overline{\I(h)}\circ g=\iota^{\lambda}\;(\mathrm{mod}\;\Endrr^{<\lambda})$.
Therefore, 
\[
g^{\prime}=\overline{g}^{\prime}\circ\overline{\I(h)}\circ g \;(\mathrm{mod}\;\Endrr^{<\lambda}),
\quad\text{for all }g^{\prime}\in C(\lambda).
\] 
This implies $C(\lambda)=\Endrr^{\leq\lambda}g$. Thus, any proper 
$\Endrr$-submodule of $C(\lambda)$ is contained in $\mathrm{Rad}(\lambda)$ which implies 
the desired statement.
\end{proof}

\begin{cor}\label{cor-okanotheronenext}
Let $\lambda\in\P_0,\mu\in\P$ and assume that 
$\Hom_{\Endrr}(C(\lambda),M)\neq 0$ for some 
$\Endrr$-module $M$ isomorphic to a subquotient of $C(\mu)$. 
Then we have $\mu\leq \lambda$. In particular, all composition factors 
$L(\lambda)$ of $C(\mu)$ satisfy $\mu\leq \lambda$.\makeqed
\end{cor}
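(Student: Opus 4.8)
The plan is to reduce both statements to the Homomorphism criterion (Proposition~\ref{prop-lastone}) together with Lemma~\ref{lem-thesecond}(b), which records that $\Endrr^{\leq\lambda}C(\mu)=0$ unless $\mu\leq\lambda$. The point is that a nonzero $\Endrr$-map out of $C(\lambda)$ detects the idempotent $c_{11}^{\lambda}$ acting nontrivially, and such an idempotent cannot act nontrivially on any subquotient of a module annihilated by $\Endrr^{\leq\lambda}$.

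First I would prove the general statement. Write the given subquotient of $C(\mu)$ as $M=M_1/M_2$ with $M_2\subseteq M_1\subseteq C(\mu)$ being $\Endrr$-submodules, and assume $\Hom_{\Endrr}(C(\lambda),M)\neq 0$. Since $\lambda\in\P_0$, Proposition~\ref{prop-lastone} provides a nonzero $m\in M$ with $\Endrr^{<\lambda}m=0$ and $c_{11}^{\lambda}m=m$. In particular $c_{11}^{\lambda}$ does not act as zero on $M$, so $c_{11}^{\lambda}M_1\not\subseteq M_2$ and hence $c_{11}^{\lambda}M_1\neq 0$. Because $M_1\subseteq C(\mu)$ and $c_{11}^{\lambda}\in\Endrr^{\leq\lambda}$, this forces $\Endrr^{\leq\lambda}C(\mu)\neq 0$, and Lemma~\ref{lem-thesecond}(b) then gives $\mu\leq\lambda$.

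For the final sentence, recall from Corollary~\ref{cor-okanotherone} that (for $\lambda\in\P_0$) the module $L(\lambda)$ is the simple head of $C(\lambda)$, so the canonical surjection $C(\lambda)\twoheadrightarrow L(\lambda)$ witnesses $\Hom_{\Endrr}(C(\lambda),L(\lambda))\neq 0$. If $L(\lambda)$ is a composition factor of $C(\mu)$, it appears as a subquotient $M\cong L(\lambda)$ of $C(\mu)$, and applying the first part to this $M$ yields $\mu\leq\lambda$. The whole argument is essentially formal given the machinery already in place; the only mild point requiring attention is propagating the nonzero action of $c_{11}^{\lambda}$ from the subquotient $M$ back to $C(\mu)$ itself, which is immediate, so I do not anticipate any genuine obstacle.
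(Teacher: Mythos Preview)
Your proof is correct and follows essentially the same route as the paper: both invoke Proposition~\ref{prop-lastone} to extract a nonzero $m\in M$ with $c_{11}^{\lambda}m=m$, and then argue that $c_{11}^{\lambda}$ must annihilate $C(\mu)$ (and hence any subquotient) when $\mu\not\leq\lambda$. The only cosmetic difference is that you cite Lemma~\ref{lem-thesecond}(b) for this annihilation, whereas the paper reproves the relevant special case inline via the weight-space argument (namely, $c_{11}^{\lambda}$ vanishes on $T_{\mu}$ when $\mu\not\leq\lambda$, so it kills the highest weight vector of $\Delta_q(\mu)$ under any $g\in C(\mu)$).
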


\begin{proof}
By \fullref{prop-lastone} the assumption in the 
corollary implies the existence of an element 
$m\in M$ with $c^{\lambda}_{11}m=m$. But if $\mu\not\leq \lambda$, then 
$c^{\lambda}_{11}$ vanishes on the $\Uq$-weight space $T_{\mu}$ and hence, $c^{\lambda}_{11}g$ 
kills the highest weight vector in $\Delta_q(\mu)$ for all $g\in C(\mu)$. This makes the existence of
such an $m\in M$ impossible 
unless $\mu\leq \lambda$.
\end{proof}

\subsection{Simple \texorpdfstring{$\Endrr$}{EndUq(T)}-modules and semisimplicity of \texorpdfstring{$\Endrr$}{EndUq(T)}}\label{sub-cellmodules2}

Let $\lambda\in\P_0$. Note that \fullref{cor-okanotherone} shows that 
$C(\lambda)$ has a unique simple head $L(\lambda)$. 
We then arrive at the following classification of all simple 
modules in $\Mod{\Endrr}$.

\begin{thm}(\textbf{Classification of simple $\Endrr$-modules.})\label{thm-cellclass}
The set
$
\{L(\lambda)\mid \lambda\in\P_0\}
$
forms a complete set of pairwise non-isomorphic, simple $\Endrr$-modules.\makeqed
\end{thm}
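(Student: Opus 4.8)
The plan is to run the Graham--Lehrer classification argument for a cellular algebra, exploiting the concrete model of $\Endrr$ provided by $\Uq$-tilting modules. That each $L(\lambda)$ with $\lambda\in\P_0$ is a simple $\Endrr$-module is already Corollary~\ref{cor-okanotherone}, so only two things remain: pairwise non-isomorphism and exhaustiveness. Non-isomorphism follows at once from Corollary~\ref{cor-okanotheronenext}: if $L(\lambda)\cong L(\mu)$ with $\lambda,\mu\in\P_0$, then the surjection $C(\lambda)\twoheadrightarrow L(\lambda)\cong L(\mu)$ gives $\Hom_{\Endrr}(C(\lambda),L(\mu))\neq 0$, and since $L(\mu)$ is a composition factor, hence a subquotient, of $C(\mu)$, Corollary~\ref{cor-okanotheronenext} forces $\mu\leq\lambda$; interchanging $\lambda$ and $\mu$ gives $\lambda\leq\mu$, so $\lambda=\mu$.

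For exhaustiveness, fix a simple $\Endrr$-module $S$. Since $\Endrr=\sum_{\lambda\in\P}\Endrr^{\leq\lambda}$ and $\Endrr\cdot S=S\neq 0$, we may choose $\lambda\in\P$ minimal with $\Endrr^{\leq\lambda}S\neq 0$; minimality forces $\Endrr^{<\lambda}S=0$, and hence there are $i,j\in\Ttt$ and $s\in S$ with $c_{ij}^{\lambda}s\neq 0$ (the basis vectors $c_{kl}^{\mu}$ with $\mu<\lambda$ annihilate $s$). Using the module formulas~\eqref{eq-action1}--\eqref{eq-action2} together with $\Endrr^{<\lambda}S=0$, one checks that $g_k^{\lambda}\mapsto c_{kj}^{\lambda}s$ is a well-defined $\Endrr$-homomorphism $C(\lambda)\to S$; it sends $g_i^{\lambda}$ to $c_{ij}^{\lambda}s\neq 0$, so it is surjective by simplicity of $S$. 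Granting that $\lambda\in\P_0$, Corollary~\ref{cor-okanotherone} identifies the unique simple head of $C(\lambda)$ with $L(\lambda)$, whence $S\cong L(\lambda)$, and the proof is complete.

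The one genuinely non-formal ingredient --- and the step I expect to cost the most --- is that this $\lambda$ actually lies in $\P_0$; this needs the product formula for the cellular basis that was not required in Theorem~\ref{thm-cell}, namely
\[
c_{ij}^{\lambda}\circ c_{kl}^{\lambda}\equiv\vartheta^{\lambda}(g_k^{\lambda},g_j^{\lambda})\,c_{il}^{\lambda}\pmod{\Endrr^{<\lambda}}.
\]
To prove it I would write $c_{ij}^{\lambda}\circ c_{kl}^{\lambda}=\overline{g}_i^{\lambda}\circ(\overline{f}_j^{\lambda}\circ\overline{g}_k^{\lambda})\circ\overline{f}_l^{\lambda}$ and use that, $\Tl$ being indecomposable, $\End_{\Uq}(\Tl)$ is local with maximal ideal the endomorphisms vanishing on the one-dimensional weight space $(\Tl)_{\lambda}$; since $\iota^{\lambda},\pi^{\lambda}$ are isomorphisms on $\lambda$-weight spaces with $\pi^{\lambda}\circ\iota^{\lambda}=c^{\lambda}$ (Remark~\ref{rem-new}), the scalar component of $\overline{f}_j^{\lambda}\circ\overline{g}_k^{\lambda}$ is precisely $\vartheta^{\lambda}(g_k^{\lambda},g_j^{\lambda})$, while its radical component, sandwiched between $\overline{g}_i^{\lambda}$ and $\overline{f}_l^{\lambda}$, gives a morphism that vanishes on $T_{\mu}$ for every $\mu\not<\lambda$ (it factors through $\Tl$ and is zero on $T_{\lambda}$) and hence lies in $\Endrr^{<\lambda}$. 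Granted the displayed relation, if $\vartheta^{\lambda}=0$ then $\Endrr^{\leq\lambda}\cdot\Endrr^{\leq\lambda}\subseteq\Endrr^{<\lambda}$ --- products involving a cell basis element labelled by some $\mu<\lambda$ already lie in the two-sided ideal $\Endrr^{<\lambda}$, and products of two $c_{**}^{\lambda}$'s do too by the formula --- so $\Endrr^{\leq\lambda}\cdot(\Endrr^{\leq\lambda}S)=0$; but $\Endrr^{\leq\lambda}S$ is a non-zero submodule of the simple module $S$, so $\Endrr^{\leq\lambda}S=S$ and then $S=0$, a contradiction. Hence $\lambda\in\P_0$, which finishes the argument.
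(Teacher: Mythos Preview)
Your proof is correct and follows the same overall Graham--Lehrer strategy as the paper: simplicity via Corollary~\ref{cor-okanotherone}, non-isomorphism via Corollary~\ref{cor-okanotheronenext} applied in both directions, and exhaustiveness by choosing $\lambda$ minimal with $\Endrr^{\leq\lambda}S\neq 0$ and then producing a surjection $C(\lambda)\twoheadrightarrow S$.

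The packaging differs in two places. For the surjection $C(\lambda)\to S$, the paper first passes to the special basis of Lemma~\ref{lem-anotherone} (adapted to a fixed decomposition $T\cong\Tl^{\oplus m_\lambda}\oplus T'$), uses \eqref{eq-problemsolved} to locate an $m\in S$ with $c_{11}^{\lambda}m=m$, and then invokes the homomorphism criterion of Proposition~\ref{prop-lastone}; you instead build the map directly from \eqref{eq-action1}--\eqref{eq-action2}, which is cleaner and avoids both auxiliary results. For the key step $\lambda\in\P_0$, the paper appeals to Proposition~\ref{prop-thefirst}: if $\vartheta^{\lambda}=0$ then $\Tl$ is not a summand of $T$, whence every $\overline{f}_j^{\lambda}\circ\overline{g}_{i'}^{\lambda}$ is non-invertible in $\End_{\Uq}(\Tl)$ and so vanishes on the one-dimensional $\lambda$-weight space, giving $\Endrr^{\leq\lambda}\Endrr^{\leq\lambda}\subset\Endrr^{<\lambda}$. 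You reach the same conclusion by proving the full product formula $c_{ij}^{\lambda}c_{kl}^{\lambda}\equiv\vartheta^{\lambda}(g_k^{\lambda},g_j^{\lambda})\,c_{il}^{\lambda}\pmod{\Endrr^{<\lambda}}$; this is the same local/weight-space argument made explicit, and has the bonus of recording a formula useful in its own right.
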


\begin{proof}
We have to show three statements, namely that the $L(\lambda)$'s are 
simple, that they are pairwise non-isomorphic and that every 
simple $\Endrr$-module is one of the $L(\lambda)$'s.

Because the first statement follows directly from the definition of $L(\lambda)$ 
(see \fullref{cor-okanotherone}), 
we start by showing the second. Thus, assume that $L(\lambda)\cong L(\mu)$ for 
some $\lambda,\mu\in\P_0$. Then
\[
\Hom_{\Endrr}(C(\lambda),C(\mu)/\mathrm{Rad}(\mu))\neq 0\neq \Hom_{\Endrr}(C(\mu),C(\lambda)/\mathrm{Rad}(\lambda)).
\]
By \fullref{cor-okanotheronenext}, we get $\mu\leq \lambda$ 
and $\lambda\leq \mu$ from the left and right-hand side. Thus, $\lambda=\mu$.

Suppose that $L\in\Mod{\Endrr}$ is simple. Then we can choose $\lambda\in \P$ minimal such 
that (recall that $\Endrr^{\leq\lambda}$ is a two-sided ideal)
\begin{equation}\label{eq-something}
\Endrr^{<\lambda}L=0\quad\text{and}\quad \Endrr^{\leq\lambda}L=L.
\end{equation}
We claim that $\lambda\in\P_0$. Indeed, if not, then, by \fullref{prop-thefirst}, 
we see that $\Tl$ is not a summand of $T$.
Hence, in our usual notation, all $\overline{f}^{\lambda}_j\circ\overline{g}^{\lambda}_{i^{\prime}}$ 
vanishes on the $\lambda$-weight space. 
It follows that $c_{ij}^{\lambda}c_{i^{\prime}j^{\prime}}^{\lambda}$ also 
vanish on the $\lambda$-weight space for all $i,j,i^{\prime},j^{\prime}\in\Ttt$. 
This means that we have
$\Endrr^{\leq\lambda}\Endrr^{\leq\lambda}\subset \Endrr^{<\lambda}$ 
making \eqref{eq-something} impossible.

For $\lambda\in\P_0$ we see by \fullref{lem-anotherone} that
\begin{equation}\label{eq-problemsolved}
c_{i1}^{\lambda}c_{1j}^{\lambda}=c_{ij}^{\lambda} \;(\mathrm{mod}\;\Endrr^{<\lambda}).
\end{equation}
Hence, by \eqref{eq-something}, there exist $i,j\in\Ttt$ 
such that $c_{ij}^{\lambda}L\neq 0$.
By \eqref{eq-problemsolved} we also have 
$c_{i1}^{\lambda}L\neq 0\neq c_{1j}^{\lambda}L$. This in turn (again by \eqref{eq-problemsolved}) 
ensures that $c_{11}^{\lambda}L\neq 0$. Take then 
$m\in c_{11}^{\lambda}L-\{0\}$ and observe that $c_{11}^{\lambda}m=m$. Hence, 
by \fullref{prop-lastone}, there is a non-zero $\Endrr$-homomorphism $C(\lambda)\to L$. The conclusion follows now from \fullref{cor-okanotherone}.
\end{proof}

Recall from \fullref{sub-boring} the notation $m_{\lambda}$ 
(the multiplicity of $\Tl$ in $T$) 
and the choice of 
basis for $C(\lambda)$ (in the paragraphs before 
\fullref{lem-anotherone}). Then we get the following 
connection between the decomposition of $T$ as in \eqref{eq-love} 
and the simple $\Endrr$-modules $L(\lambda)$.

\begin{thm}(\textbf{Dimension formula.})\label{thm-dim}
If $\lambda\in\P_0$, then $\dim(L(\lambda))=m_{\lambda}$.\makeqed
\end{thm}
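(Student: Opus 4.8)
The plan is to realize $L(\lambda)$ explicitly as a quotient of the cell module $C(\lambda) = \Hom_{\Uq}(\Dl, T)$ and to compute the dimension of the radical $\mathrm{Rad}(\lambda)$ directly from the decomposition $T \cong \Tl^{\oplus m_{\lambda}} \oplus T'$ in~\eqref{eq-love}. Recall from Corollary~\ref{cor-okanotherone} that $L(\lambda) = C(\lambda)/\mathrm{Rad}(\lambda)$ where $\mathrm{Rad}(\lambda) = \{g \in C(\lambda) \mid \vartheta^{\lambda}(g, C(\lambda)) = 0\}$ is the radical of the cellular pairing. Since $\vartheta^{\lambda}$ is symmetric (Lemma~\ref{lem-thethird}), the dimension of $L(\lambda)$ equals the rank of the Gram matrix of $\vartheta^{\lambda}$ with respect to any basis of $C(\lambda)$. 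So the whole statement reduces to showing that this Gram matrix has rank exactly $m_{\lambda}$.

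The key step is to use the preferred basis of $C(\lambda)$ described in the paragraph before Lemma~\ref{lem-anotherone}: we take $g_1^{\lambda}, \dots, g_{m_{\lambda}}^{\lambda}$ to be the inclusions of $\Tl$ as the successive summands of $\Tl^{\oplus m_{\lambda}}$ (so that $\overline{f}_i^{\lambda} \circ g_j^{\lambda} = \delta_{ij}\,\iota^{\lambda}$ for $i,j \le m_{\lambda}$), and extend by an arbitrary basis of $\Hom_{\Uq}(\Dl, T')$. Then for $i, j \le m_{\lambda}$ we have $\I(g_j^{\lambda}) \circ g_i^{\lambda} = \overline{f}_j^{\lambda} \circ g_i^{\lambda} = \delta_{ij}\, c^{\lambda}$ (after composing with $\pi^{\lambda}$, using $\pi^{\lambda} \circ \iota^{\lambda} = c^{\lambda}$), so $\vartheta^{\lambda}(g_i^{\lambda}, g_j^{\lambda}) = \delta_{ij}$ on this block; that is, the Gram matrix restricted to $\mathrm{span}\{g_1^{\lambda}, \dots, g_{m_{\lambda}}^{\lambda}\}$ is the identity, giving rank $\ge m_{\lambda}$. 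For the reverse inequality, I would show that every basis vector $g$ coming from $\Hom_{\Uq}(\Dl, T')$ lies in $\mathrm{Rad}(\lambda)$: for such $g$ and any $h \in C(\lambda)$, the composite $\I(h) \circ g \colon \Dl \to T' \hookrightarrow T \to \Tl \to \Nl$ factors through $T'$, and since $T'$ has no summand isomorphic to $\Tl$, the induced map on the $\lambda$-weight space vanishes (by the indecomposability and maximal-weight characterization of $\Tl$ in Proposition~\ref{prop-tiltingclass}, any map $\Tl \to T' \to \Tl$ is nilpotent, hence zero on $(\Tl)_{\lambda} \cong \K$), forcing $\vartheta^{\lambda}(g, h) = 0$. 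Hence $\mathrm{Rad}(\lambda)$ contains the span of these $\dim C(\lambda) - m_{\lambda}$ vectors, so $\dim L(\lambda) = \dim C(\lambda) - \dim\mathrm{Rad}(\lambda) \le m_{\lambda}$, and combining the two bounds gives equality.

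I expect the main obstacle to be the careful bookkeeping in the second inequality: one must argue cleanly that a homomorphism $\Dl \to T'$ pairs trivially with everything, which amounts to the statement that the $\lambda$-weight-space component of any map $\Tl \to T' \to \Tl$ vanishes. This is exactly the kind of fact packaged in the proof of Proposition~\ref{prop-thefirst} (the "bow tie" argument) — there, a \emph{nonzero} pairing forces $\Tl$ to be a summand — so I would invoke that proposition, or its proof, rather than redo the weight-space analysis from scratch. Once that is in hand, the dimension count is immediate. It is also worth recording that $\dim L(\lambda)$ is thereby independent of all the choices (basis and lifts), which is consistent with the well-definedness already established for $\vartheta^{\lambda}$ in Lemma~\ref{lem-thethird}.
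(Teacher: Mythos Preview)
Your proposal is correct and follows essentially the same route as the paper: both use the distinguished basis of $C(\lambda)$ adapted to the decomposition $T\cong\Tl^{\oplus m_\lambda}\oplus T'$, show that $\Hom_{\Uq}(\Dl,T')\subset\mathrm{Rad}(\lambda)$ via the bow tie/nilpotency argument (the paper gestures at this by citing the proof of Corollary~\ref{cor-okanotherone}, you cite Proposition~\ref{prop-thefirst}; the content is the same), and observe that $f_i^{\lambda}\circ g_i^{\lambda}=c^{\lambda}$ for $i\leq m_\lambda$ keeps those $g_i^{\lambda}$ out of the radical. Your write-up is more detailed than the paper's three-sentence proof---in particular your identity-block computation of the Gram matrix makes the lower bound $\dim L(\lambda)\geq m_\lambda$ explicit---but there is no substantive difference in strategy.
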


Note that this result is implicit in \cite{gl} 
and has also been observed in e.g. \cite{erd} and \cite{soe5}.

\begin{proof}
We use the notation from \fullref{sub-boring}. Since $T^{\prime}$ has no summands isomorphic 
to $\Tl$, we see that $\Hom_{\Uq}(\Dl,T^{\prime})\subset \mathrm{Rad}(\lambda)$ (see the proof of 
\fullref{cor-okanotherone}).
On the other hand, $g_i^{\lambda}\notin\mathrm{Rad}(\lambda)$ for $1\leq i\leq m_{\lambda}$ because 
for these $i$ we have $f_i^{\lambda}\circ g_i^{\lambda}=c^{\lambda}$ by construction.
Thus, the statement follows.
\end{proof}

\begin{thm}(\textbf{Semisimplicity criterion.})\label{thm-cellsemisimple}
The cellular algebra $\Endrr$ is semisimple if and only if $T$ is a semisimple $\Uq$-module.\makeqed
\end{thm}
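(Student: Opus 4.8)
The claim is an ``iff'', so I would prove the two implications separately, and the forward direction (``$\Endrr$ semisimple $\Rightarrow$ $T$ semisimple'') is the one I expect to be routine, while the reverse direction is essentially a matter of combining the representation-theoretic results already established. Let me sketch both.

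First suppose $T$ is a semisimple $\Uq$-module. Then each indecomposable summand of $T$ is simple, hence of the form $\Ll$ with $\Dl\cong\Ll\cong\Nl$ (by Example~\ref{ex-filtration} a simple module has a $\Delta_q$-filtration iff it equals $\Dl$; since $T$ is tilting, so is each summand). In particular every $\lambda\in\P$ lies in $\P_0$, and moreover $T^{\prime}$ in~\eqref{eq-love} has the form $\bigoplus_{\mu\neq\lambda}T_q(\mu)^{\oplus m_\mu}$ with all $T_q(\mu)$ simple and non-isomorphic to $\Tl=\Ll$, so $\Hom_{\Uq}(\Dl,T^{\prime})=\Hom_{\Uq}(\Ll,T^{\prime})=0$. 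Hence $C(\lambda)=\Hom_{\Uq}(\Dl,T)$ has dimension exactly $m_\lambda$, and by Theorem~\ref{thm-dim} this is $\dim L(\lambda)$; thus $C(\lambda)=L(\lambda)$ is simple for every $\lambda\in\P_0$. Now $\sum_{\lambda\in\P_0}\dim(L(\lambda))^2=\sum_\lambda m_\lambda^2=\dim\Endrr$, the last equality by~\eqref{eq-dimhom} together with Corollary~\ref{cor-tiltingext3} applied to $T=\bigoplus\Tl^{\oplus m_\lambda}$ with $\Tl$ simple (so $(T:\Dm)=\delta$-supported appropriately). A finite-dimensional algebra whose simple modules satisfy $\sum(\dim L_i)^2=\dim A$ is semisimple, so $\Endrr$ is semisimple. (Alternatively, and more transparently: $\Mod{\Uq}$ restricted to the semisimple summand containing $T$ is semisimple, and $\End$ of a semisimple module is a product of matrix algebras, cf.\ Example~\ref{ex-cellular}.)

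Conversely, suppose $T$ is not semisimple. I want to produce a non-semisimple quotient or a failure of the sum identity; the cleanest route is to exhibit a cell module $C(\lambda)$ that is not semisimple, equivalently $\mathrm{Rad}(\lambda)\neq 0$ for some $\lambda\in\P_0$, equivalently $\dim C(\lambda)>\dim L(\lambda)=m_\lambda$. Pick $\lambda\in X^+$ maximal among weights $\mu$ with $\Tl$ a summand of $T$ such that the decomposition multiplicity $(T:\Dl)$ exceeds $m_\lambda$; such a $\lambda$ exists precisely because $T$ is not semisimple. Indeed, if $(T:\Dl)=m_\lambda$ for every $\lambda\in\P_0$, then comparing $\dim T=\sum_\lambda (T:\Dl)\dim\Dl$ with $\dim(\bigoplus \Tl^{\oplus m_\lambda})$ and using $\Tl=\Dl$ whenever the multiplicities agree at the top would force $T\cong\bigoplus\Tl^{\oplus m_\lambda}$ with each $\Tl$ simple, contradicting non-semisimplicity; I would organize this as a downward induction on the poset $\P$, using that $(T:\Dl)=\dim\Hom_{\Uq}(\Dl,T)=\dim C(\lambda)$ by Corollary~\ref{cor-extfunctor}(b) and Corollary~\ref{cor-tiltingext3}. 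For that $\lambda$ we get $\dim C(\lambda)=(T:\Dl)>m_\lambda=\dim L(\lambda)$, so $C(\lambda)$ has a nonzero radical; but a semisimple algebra has all modules semisimple, and a semisimple module with simple head is already simple, so $C(\lambda)=L(\lambda)$ would be forced. Hence $\Endrr$ is not semisimple.

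\textbf{Main obstacle.} The delicate point is the bookkeeping in the converse: translating ``$T$ not semisimple'' into ``some $C(\lambda)$ is strictly bigger than its head''. The equality $\dim C(\lambda)=(T:\Dl)$ is immediate, and $\dim L(\lambda)=m_\lambda$ is Theorem~\ref{thm-dim}, so everything reduces to the elementary (but not entirely automatic) fact that $T$ is semisimple iff $(T:\Dl)=m_\lambda$ for all $\lambda$ — i.e.\ iff every indecomposable summand $\Tl$ occurring in $T$ is simple, equivalently satisfies $\Tl\cong\Dl\cong\Nl$. I would isolate this as the key lemma and prove it by: ($\Leftarrow$) $\Tl$ simple for all occurring $\lambda$ is the definition of $T$ semisimple; ($\Rightarrow$) if $T$ semisimple then each summand is simple, and a simple tilting module is $\Dl$ by Example~\ref{ex-filtration}, whence $(T:\Dl)=m_\lambda$. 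Everything else is assembling Theorems~\ref{thm-dim} and~\ref{thm-cellclass}.
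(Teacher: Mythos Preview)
Your forward direction is correct and essentially matches the paper's (both simply invoke Artin--Wedderburn once $T$ is a direct sum of simple $\Dl$'s).

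The reverse direction has a genuine gap. You claim that if $T$ is not semi-simple then there exists $\lambda\in\P_0$ with $(T:\Dl)>m_\lambda$, so that $C(\lambda)$ strictly contains $L(\lambda)$. But this already fails for $T=T_q(3)$ in the running $\sll{2}$, $l=3$ example (Examples~\ref{ex-sl2weylnext} and~\ref{ex-nicebasis}): here $\P_0=\{3\}$ and $(T:\Delta_q(3))=1=m_3$, so $C(3)=L(3)$ is one-dimensional and simple --- yet $T$ is not semi-simple and $\Endrr\cong\C[X]/(X^2)$ is not semi-simple either. The extra Weyl factor $\Delta_q(1)$ sits at the weight $1\in\P\setminus\P_0$, which your argument never touches. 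So your ``key lemma'' with the quantifier ranging over $\P_0$ is false, and the dimension-comparison sketch you give for it does not repair this: both sides of your comparison equal $\dim T$ tautologically.

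What is missing is that semi-simplicity of $\Endrr$ forces $\P=\P_0$. One way to rescue your approach: if $\Endrr$ is semi-simple then $\sum_{\lambda\in\P_0}(\dim L(\lambda))^2=\dim\Endrr$, and combining Theorem~\ref{thm-dim} with~\eqref{eq-dimhom} gives $\sum_{\lambda\in\P_0}m_\lambda^2=\sum_{\lambda\in\P}(T:\Dl)^2$; since $(T:\Dl)\geq m_\lambda$ termwise, this forces both $\P=\P_0$ and $(T:\Dl)=m_\lambda$ for every $\lambda\in\P$, after which the argument you sketch does go through. The paper instead argues directly: it picks a \emph{minimal} non-simple summand $T_q(\lambda')$, a minimal $\mu<\lambda'$ with $\Hom_{\Uq}(\Delta_q(\mu),T_q(\lambda'))\neq 0$, and builds from these a nonzero endomorphism $\overline g\circ\overline f\in\Endrr$ factoring through $T_q(\mu)$; it then checks case by case that this element annihilates every $C(\mu')$ for $\mu'\in\P_0$, hence lies in the Jacobson radical, contradicting semi-simplicity.
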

\begin{proof}
Note that the $\Tl$'s are simple if and only if $\Tl\cong\Dl\cong\Ll\cong\Nl$.
Hence, $T$ is semisimple as a $\Uq$-module if and only if 
$T=\bigoplus_{\lambda\in\P_0}\Dl^{\oplus m_{\lambda}}$ 
with $m_{\lambda}$ as in \fullref{sub-boring}.

Thus, we see that, if $T$ decomposes into simple 
$\Uq$-modules, then $\Endrr$ is semisimple by the Artin--Wedderburn theorem (since 
$\Endrr$ will decompose into a direct sum of matrix algebras in this case).

On the other hand, if $\Endrr$ is semisimple, then we know, by \fullref{cor-okanotherone}, 
that the cell 
modules $C(\lambda)$ are simple, i.e. $C(\lambda)=L(\lambda)$ 
for all $\lambda\in\P_0$. Then
\begin{equation}\label{eq-needed}
T\cong\bigoplus_{\lambda\in\P_0}\Tl^{\oplus m_{\lambda}},\quad m_{\lambda}=\dim(L(\lambda))=\dim(C(\lambda))=\dim(\Hom_{\Uq}(\Dl,T))
\end{equation}
by \fullref{thm-dim}. Assume now 
that there exists a summand $T_q(\lambda^{\prime})$ of 
$T$ as in \eqref{eq-needed} with 
$T_q(\lambda^{\prime})\not\cong\Delta_q(\lambda^{\prime})$ and choose 
$\lambda^{\prime}\in\P_0$ minimal with this property.

Then there exists a $\mu<\lambda^{\prime}$ such that 
$\Hom_{\Uq}(\Delta_q(\mu),T_q(\lambda^{\prime}))\neq 0$. Choose also $\mu$ minimal 
among those. By our usual 
construction this then gives in turn a non-zero $\Uq$-homomorphism 
$\overline{g}\circ\overline{f}\colon T_q(\lambda^{\prime})\to T_q(\mu)\to T_q(\lambda^{\prime})$. 
By \eqref{eq-needed}, we can extend 
$\overline{g}\circ\overline{f}$ to an element of $\Endrr$ by 
defining it to be zero on all other summands.

Clearly, by construction, 
$(\overline{g}\circ\overline{f})C(\mu^{\prime})=0$ for $\mu^{\prime}\in\P_0$ 
with $\mu^{\prime}\neq\lambda^{\prime}$ and $\mu^{\prime}\not\leq\mu$. If 
$\mu^{\prime}\leq\mu$, then consider $\varphi\in C(\mu^{\prime})$. Then 
$(\overline{g}\circ\overline{f})\circ\varphi=0$ unless $\varphi$ has some non-zero component 
$\varphi^{\prime}\colon\Delta_q(\mu^{\prime})\to T_q(\lambda^{\prime})$. This 
forces $\mu^{\prime}=\mu$ by minimality of $\mu$. But since 
$\Delta_q(\mu^{\prime})\cong T_q(\mu^{\prime})$, by minimality of $\lambda^{\prime}$, we 
conclude that $\overline{f}\circ\varphi=0$ (otherwise 
$T_q(\mu^{\prime})$ would be a summand of $T_q(\lambda^{\prime})$).

Hence, the non-zero element $\overline{g}\circ\overline{f}\in\Endrr$ 
kills all $C(\mu^{\prime})$ for $\mu^{\prime}\in\P_0$. This contradicts the semisimplicity 
of $\Endrr$: as noted above, $C(\lambda)=L(\lambda)$ for all $\lambda\in\P_0$ which 
implies $\Endrr\cong\bigoplus_{\lambda\in\P_0}C(\lambda)^{\oplus k_{\lambda}}$ 
for some $k_{\lambda}\in\N$.
\end{proof}
\section{Cellular structures: examples and applications}\label{sec-examples}
In this section we provide many examples of cellular 
algebras arising from our main theorem. This includes several 
renowned examples where cellularity is known (but usually proved case by case spread 
over the literature 
and with cellular bases which differ in general from ours), and also new ones.
In the first subsection we give a full treatment of the semisimple case and 
describe how to obtain all the examples from the introduction using our methods.
In the second subsection we focus on the Temperley--Lieb 
algebras $\cal{TL}_d(\delta)$ and 
give a detailed account how to apply our results to these.

\subsection{Cellular structures using \texorpdfstring{$\Uq$}{Uq}-tilting modules: several examples}\label{sub-examples}

In the following let $\omega_i$ for $i=1,\dots,n$ denote the fundamental weights 
(of the corresponding type).

\subsubsection{The semisimple case}\label{subsub-semi}

Suppose the category $\Mod{\Uq}$ is semisimple, that is, 
$q$ is not a root of unity in $\K^{\ast}-\{1\}$ or $q=\pm 1\in\K$ with 
$\Char(\K)=0$.

In this case $\T=\Mod{\Uq}$ and any $T\in\T$ has a
decomposition $T\cong \bigoplus_{\lambda\in X^+}\Dl^{\oplus m_{\lambda}}$ with the multiplicities
$m_{\lambda}=(T:\Dl)$. This induces an Artin--Wedderburn decomposition
\begin{equation}\label{eq-artinwed}
\Endrr\cong \bigoplus_{\lambda\in X^+}M_{m_{\lambda}}(\K)
\end{equation}
into matrix algebras.
A natural choice of basis for $\Hom_{\Uq}(\Dl,T)$ is
\[
G^{\lambda}=\{g_1^{\lambda},\dots,g_{m_{\lambda}}^{\lambda}\mid g_i^{\lambda}\colon\Dl\hookrightarrow T\text{ is the inclusion into the i-th summand}\}.
\]
Then our cellular basis consisting of the 
$c_{ij}^{\lambda}$'s as 
in \fullref{sub-celltilting} 
(no lifting is needed in this case) is an 
Artin--Wedderburn basis, i.e.,
a basis that realizes the decomposition 
\eqref{eq-artinwed} in the following sense. 
The basis element $c_{ij}^{\lambda}$ is the matrix 
$\mathbf{E}_{ij}^{\lambda}$ (in the $\lambda$-summand on 
the right-hand side in \eqref{eq-artinwed}) which has all 
entries zero except one 
entry equals $1$ in the $i$-th row and $j$-th column.
Note that, as expected in this case, 
$\Endrr$ has, by the \fullref{thm-cellclass} 
and \fullref{thm-dim}, one simple $\Endrr$-module 
$L(\lambda)$ of dimension $m_{\lambda}$ for all 
summands $\Dl$ of $T$.

\subsubsection{The symmetric group and the Iwahori--Hecke algebra}\label{subsub-hecke}

Let us fix $d\in \Z_{\geq 0}$ and let 
us denote by $S_d$ the symmetric group in $d$ letters and by
$\cal{H}_d(q)$ its associated Iwahori--Hecke algebra. 
We note that $\K[S_d]\cong\cal{H}_d(1)$. 
Moreover, let $\Uq=\Uq(\gll{n})$. The vector representation of $\Uq$, 
which we denote by $V=\K^n=\Delta_q(\omega_1)$, is a $\Uq$-tilting module 
(since $\omega_1$ is minimal in $X^+$). 
Set $T=V^{\otimes d}$, which is again a $\Uq$-tilting module. Quantum Schur--Weyl duality 
(see \cite[Theorem 6.3]{dps} for surjectivity and 
use Ext-vanishing for the fact that $\dim(\Endrr)$ 
is obtained via base change from $\Z[v,v^{-1}]$ to $\K$ 
for all $\K$ and $q\in\K^{\ast}$) states that
\begin{equation}\label{eq-schurweyl3}
\Phi_{q\mathrm{SW}}\colon\cal{H}_d(q)\twoheadrightarrow
\Endrr\quad\text{and}\quad\Phi_{q\mathrm{SW}}\colon
\cal{H}_d(q)\xrightarrow{\cong}\Endrr,\text{ if }n\geq d.
\end{equation}
Thus, our main result implies that $\cal{H}_d(q)$, and in particular 
$\K[S_d]$, are cellular for any $q\in\K^{\ast}$ and any field $\K$ (by taking $n\geq d$).

In this case the cell modules for $\Endrr$ are usually called
Specht modules $S^{\lambda}_{\K}$ and our
\fullref{thm-dim} gives the following.
\begin{itemize}
\item If $q=1$ and $\Char(\K)=0$, then the dimension 
$\dim(S^{\lambda}_{\K})$ is equal to the multiplicity 
of the simple $\Uu_1$-module 
$\Delta_1(\lambda)\cong L_1(\lambda)$ in $V^{\otimes d}$ for all $\lambda\in\P^0$. 
These numbers are given by known formulas (e.g. the hook length formula).
\item If $q=1$ and $\Char(\K)>0$, then the dimension of the simple head $D^{\lambda}_{\K}$
of $S^{\lambda}_{\K}$ is the multiplicity with which 
$T_1(\lambda)$ occurs as a summand in $V^{\otimes d}$ for all $\lambda\in\P_0$, see also \cite{erd}. 
It is a wide open problem to determine these numbers. (See however \cite{rw}.)
\item If $q$ is a complex, primitive root of unity, then we can compute the 
dimension of the simple $\cal{H}_d(q)$-modules by using the algorithm
as in \cite{astproofs}. 
In particular, this connects with the LLT algorithm from \cite{llt}.
\item If $q$ is a root of unity and $\K$ is arbitrary, 
then not much is known. Still, our methods apply 
and we get a way to calculate the dimensions of the simple $\cal{H}_d(q)$-modules, 
if we can decompose $T$ into its indecomposable summands.
\end{itemize}

\subsubsection{The Temperley--Lieb algebra and other \texorpdfstring{$\sll{2}$}{sll2}-related algebras}\label{subsub-othersl2}

Let $\Uq=\Uq(\sll{2})$ and let $T$ be as in \fullref{subsub-hecke} with $n=2$. 
For any $d\in\Z_{\geq 0}$ we have 
$\cal{TL}_d(\delta)\cong\Endrr$ by Schur--Weyl duality, 
where $\cal{TL}_d(\delta)$ is the Temperley--Lieb algebra in $d$-strands with parameter 
$\delta=q+q^{-1}$. This works for all $\K$ and 
all $q\in\K^{\ast}$ (this can be deduced from, for
example, \cite[Theorem 6.3]{dps}). 
Hence, $\cal{TL}_d(\delta)$ is always cellular.
We discuss this case in more detail in \fullref{sub-graded}.

Furthermore, if we are in the semisimple case, then $\Delta_q(i)$ is a 
$\Uq$-tilting module 
for all $i\in\Z_{\geq 0}$ and so is 
$T=\Delta_q(i_1)\otimes \cdots \otimes \Delta_q(i_d)$. Thus, we obtain that $\Endrr$
is cellular. 

The algebra $\Endrr$
is known to give a diagrammatic presentation of the (tensor) category of 
$\Uq$-modules, see \cite{rt}, and can be used to define the
colored Jones polynomial.

If $q\in\K$ is a root of unity and $l$ is the order of $q^2$, then, for any $0\leq i<l$,  
$\Delta_q(i)$ is a $\Uq$-tilting module 
(since its simple)
and so is $T=\Delta_q(i)^{\otimes d}$. The endomorphism algebra $\Endrr$ is cellular. This reproves 
parts of \cite[Theorem 1.1]{alz} using our general approach.

In characteristic $0$: Another family of $\Uq$-tilting 
modules was studied in \cite{at}. For any $d\in\Z_{\geq 0}$, fix any 
$\lambda_0\in\{0,\dots, l-2\}$ and consider
$T=T_q(\lambda_0)\oplus\dots\oplus T_q(\lambda_d)$ 
where $\lambda_k$ 
is the unique integer $\lambda_k\in\{kl,\dots, (k+1)l-2\}$ 
linked to $\lambda_0$. 
We again obtain that $\Endrr$ is cellular.
Note that $\Endrr$ 
can be identified with the so-called 
(type $A$) zig-zag algebra $A_d$, 
see \cite[Proposition 3.9]{at}, 
introduced in \cite{hk1}.
These algebras are naturally graded making $\Endrr$ into a 
graded cellular algebra in the sense of \cite{hm} and are special examples arising from 
the family of generalized Khovanov arc algebras whose cellularity is studied in \cite{bs1}.

\subsubsection{Spider algebras}\label{subsub-spid}

Let $\Uq=\Uq(\sll{n})$ (or, alternatively, $\Uq(\gll{n})$). 
One has for any $q\in\K^{\ast}$ that all $\Uq$-representations 
$\Delta_q(\omega_i)$ are $\Uq$-tilting modules 
(because the $\omega_i$'s are minimal in $X^+$). Hence, for any $k_i\in\{1,\dots,n-1\}$,
$T=\Delta_q(\omega_{k_1})\otimes \cdots \otimes \Delta_q(\omega_{k_d})$ is a 
$\Uq$-tilting module. Thus, $\Endrr$ is cellular. 
These algebras are related to type $A_{n-1}$ 
spider algebras as in \cite{kup}, are connected to the Reshetikhin--Turaev
$\sll{n}$-link polynomials and give a diagrammatic description of the representation 
theory of $\sll{n}$, see \cite{ckm}, providing a link from our 
work to low-dimensional topology and diagrammatic algebra.
Note that cellular bases 
(which, in this case, coincide with our cellular bases) 
of these were 
found in \cite[Theorem 2.57]{elias}.

More general: In any type we have that $\Delta_q(\lambda)$ are 
$\Uq(\fg)$-tilting modules for 
minuscule $\lambda\in X^+$, 
see \cite[Part II, Chapter 2, Section 15]{jarag}. Moreover, if $q$ is a 
root of unity ``of order $l$ big enough'' (ensuring that the 
$\omega_i$'s are in the closure of the fundamental alcove), then the $\Delta_q(\omega_i)$ 
are $\Uq(\fg)$-tilting modules by the linkage principle 
(see \cite[Corollaries 4.4 and 4.6]{an2}).
So in these cases we can 
generalize the above results to other types.

Still more generally: we may take (for any type and any $q\in\K^{\ast}$) 
arbitrary $\lambda_j\in X^+$ (for $j=1,\dots,d$) and obtain a cellular structure on $\Endrr$ 
for $T=T_q(\lambda_1)\otimes\cdots\otimes T_q(\lambda_d)$.

\subsubsection{The Ariki--Koike algebra and related algebras}\label{subsub-akblob}

Take $\fg=\gll{m_1}\oplus\dots\oplus\gll{m_r}$ 
(which can be easily fit into our context) with $m_1+\cdots+m_r=m$ and let $V$ be the vector 
representation of $\Uu_1(\gll{m})$ restricted to $\Uu_1=\Uu_1(\fg)$. This is again a 
$\Uu_1$-tilting module 
and so is $T=V^{\otimes d}$.
Then 
we have a cyclotomic analog of \eqref{eq-schurweyl3}, namely
\begin{equation}\label{eq-schurweyl4}
\Phi_{\mathrm{cl}}\colon\C[\Z/r\Z\wr S_d]\twoheadrightarrow\End_{\Uu_1}(T)\quad\text{and}\quad\Phi_{\mathrm{cl}}\colon
\C[\Z/r\Z\wr S_d]\xrightarrow{\cong}\End_{\Uu_1}(T),\text{ if }m\geq d,
\end{equation}
where $\C[\Z/r\Z\wr S_d]$ is the group algebra of the complex 
reflection group $\Z/r\Z\wr S_d\cong (\Z/r\Z)^d\rtimes S_d$, 
see \cite[Theorem 9]{mas}. Thus, we 
can apply our main theorem and obtain a cellular basis for these quotients of
$\C[\Z/r\Z\wr S_d]$. If $m\geq d$, then \eqref{eq-schurweyl4} is an 
isomorphism (see Lemma 11 loc. cit.) 
and we obtain that $\C[\Z/r\Z\wr S_d]$ itself is a 
cellular algebra for all $r,d$.
In the extremal case $m_1=m-1$ and $m_2=1$, 
the resulting quotient of \eqref{eq-schurweyl4} is known 
as Solomon's algebra introduced in \cite{sol} 
(also called the algebra of the inverse semigroup or the rook monoid algebra) 
and we obtain that Solomon's algebra is cellular.
In the extremal case $m_1=m_2=1$, the resulting 
quotient is a specialization of the blob algebra $\cal{BL}_d(1,2)$
(in the notation used in \cite{rh}).
To see this, note that both algebras are quotients of $\C[\Z/r\Z\wr S_d]$.
The kernel of the quotient to $\cal{BL}_d(1,2)$ is 
described explicitly by Ryom-Hansen in \cite[(1)]{rh} 
and is by \cite[Lemma 11]{mas} contained in the kernel of 
$\Phi_{\mathrm{cl}}$ from \eqref{eq-schurweyl4}. 
Since both algebras have the same dimensions, they are isomorphic.

Let $\Uq=\Uq(\fg)$. We get in the quantized case (for $q\in\C-\{0\}$ not a root of unity)
\begin{equation}\label{eq-schurweyl5}
\Phi_{q\mathrm{cl}}\colon\cal{H}_{d,r}(q)\twoheadrightarrow\End_{\Uq}(T)\quad\text{and}\quad
\Phi_{q\mathrm{cl}}\colon\cal{H}_{d,r}(q)\xrightarrow{\cong}\End_{\Uq}(T),\text{ if }m\geq d,
\end{equation}
where $\cal{H}_{d,r}(q)$ is the Ariki--Koike algebra introduced 
in \cite{ak}. A proof of \eqref{eq-schurweyl5} 
can for example be found in \cite[Theorem 4.1]{sash}. 
Thus, as before, our main theorem applies and we obtain: 
the Ariki--Koike algebra $\cal{H}_{d,r}(q)$ is cellular 
(by taking $m\geq d$), the quantized rook monoid algebra $\cal{R}_d(q)$ 
from \cite{hr} is 
cellular 
and the blob algebra $\cal{BL}_d(q,m)$ is cellular (which follows as above).
Note that the cellularity of $\cal{H}_{d,r}(q)$ was obtained in \cite{djm}, the 
cellularity of the quantum rook monoid algebras and of the 
blob algebra can be found in \cite{pag} and in \cite{rh3} respectively.

In fact, \eqref{eq-schurweyl5} is  
still true in the non-semisimple cases, 
see \cite[Theorem 1.10 and Lemma 2.12]{hs} as long as $\K$ satisfies 
a certain separation condition (which implies that the algebra in question has 
the right dimension, see \cite{ari}). Again, our main theorem 
applies.

\subsubsection{The Brauer algebras and related algebras}\label{subsub-brauer}

Consider $\Uq=\Uq(\fg)$ where $\fg$ is either an orthogonal 
$g=\mathfrak{o}_{2n}$ and $g=\mathfrak{o}_{2n+1}$ or the symplectic
$g=\mathfrak{sp}_{2n}$ Lie algebra. Let $V=\Delta_q(\omega_1)$ be the quantized version of the 
corresponding vector representation. 
In both cases, $V$ is a $\Uq$-tilting module (for type $B$ and $q=1$ this requires 
$\Char(\K)\neq 2$, see \cite[Page 20]{ja1}) and hence, so is $T=V^{\otimes d}$. 
We first take $q=1$ 
and set $\delta=2n$ in case $\fg=\mathfrak{o}_{2n}$, and $\delta=2n+1$  
in case $\fg=\mathfrak{o}_{2n+1}$ and $\delta=-2n$ in 
case $\fg=\mathfrak{sp}_{2n}$ respectively. Then 
(see \cite[Theorem 1.4]{ddh} and \cite[Theorem 1.2]{dh} for infinite $\K$,
or \cite[Theorem 5.5]{es1} for $\K=\C$)
\begin{equation}\label{eq-schurweyl6}
\Phi_{\mathrm{Br}}\colon\cal{B}_d(\delta)
\twoheadrightarrow\End_{\Uu_1}(T)\quad
\text{and}\quad\Phi_{\mathrm{Br}}\colon\cal{B}_d(\delta)
\xrightarrow{\cong}\End_{\Uu_1}(T),\text{ if }n> d,
\end{equation}
where $\cal{B}_d(\delta)$ is the Brauer algebra in $d$ strands
(for $\fg\not=\mathfrak{o}_{2n}$ the isomorphism in \eqref{eq-schurweyl6} already 
holds for $n=d$). Thus, we get 
cellularity of $\cal{B}_d(\delta)$ 
by observing that in characteristic $p$ we can always assume that 
$n$ is large because $\cal{B}_d(\delta)=\cal{B}_d(\delta+p)$.

Similarly, let $\Uq=\Uq(\gll{n})$, $q\in\K^{\ast}$ arbitrary and
$T=\Delta_q(\omega_1)^{\otimes r}\otimes \Delta_q(\omega_{n-1})^{\otimes s}$. 
By \cite[Theorem 7.1 and Corollary 7.2]{dds} we have
\begin{equation}\label{eq-schurweyl7}
\Phi_{\mathrm{wBr}}\colon\cal{B}^n_{r,s}([n])
\twoheadrightarrow\End_{\Uq}(T)\quad\text{and}\quad\Phi_{\mathrm{wBr}}\colon\cal{B}^n_{r,s}([n])
\xrightarrow{\cong}\End_{\Uq}(T),\text{ if }n\geq r+s.
\end{equation}
Here $\cal{B}^n_{r,s}([n])$ is the quantized walled Brauer 
algebra for $[n]=q^{1-n}+\dots+q^{n-1}$. Since 
$T$ is a $\Uq$-tilting module, we get from \eqref{eq-schurweyl7}
cellularity of $\cal{B}^n_{r,s}([n])$ and of its quotients under $\Phi_{\mathrm{wBr}}$.

The walled Brauer algebra $\cal{B}^n_{r,s}(\delta)$ over 
$\K=\C$ for arbitrary parameter $\delta\in\Z$ 
appears as the centralizer of $\End_{\mathfrak{gl}(m|n)}(T)$ for 
$T=V^{\otimes r}\otimes (V^{*})^{\otimes s}$ where $V$ is the vector 
representation of the superalgebra $\mathfrak{gl}(m|n)$ with $\delta=m-n$. That is, we have
\begin{equation}\label{eq-schurweyl8}
\Phi_{\mathrm{s}}\colon\cal{B}^n_{r,s}(\delta)\twoheadrightarrow
\End_{\mathfrak{gl}(m|n)}(T)\;\;\text{and}\;\;
\Phi_{\mathrm{s}}\colon\cal{B}^n_{r,s}(\delta)\xrightarrow{\cong}
\End_{\mathfrak{gl}(m|n)}(T),\text{ if }(m+1)(n+1)\geq r+s,
\end{equation}
see \cite[Theorem 7.8]{bs5}. 
It can be shown that $T$ is a $\mathfrak{gl}(m|n)$-tilting module and thus, 
our main theorem applies and hence, by \eqref{eq-schurweyl8}, 
$\cal{B}^n_{r,s}(\delta)$ is cellular. 
Similarly for the quantized version.

Quantizing the Brauer case, taking $q\in\K^{\ast}$, 
$\fg$, $V=\Delta_q(\omega_1)$ and $T$ as before (without the 
restriction $\Char(\K)\neq 2$ for type $B$)
gives us a cellular 
structure on $\Endrr$. The algebra $\Endrr$ is a quotient of the 
Birman--Murakami--Wenzl algebra $\cal{BMW}_d(\delta)$ 
(for appropriate parameters), 
see \cite[(9.6)]{lz2} for the orthogonal case (which works 
for any $q\in\C-\{0,\pm 1\}$) 
and \cite[Theorem 1.5]{hu} for the symplectic case 
(which works 
for any $q\in\K^{\ast}-\{1\}$ and infinite $\K$). 
Again, taking 
$n\geq d$ (or $n> d$), we recover the cellularity of 
$\cal{BMW}_d(\delta)$.

\subsubsection{Infinite-dimensional modules --- highest weight categories}\label{subsub-cato}

Observe that our main theorem does not use the 
specific properties of $\Mod{\Uq}$, but works for any $\End_{\Mod{A}}(T)$ where 
$T$ is an $A$-tilting module for some finite-dimensional, quasi-hereditary algebra 
$A$ over $\K$ or $T\in\boldsymbol{\cal{C}}$ for some highest weight category 
$\boldsymbol{\cal{C}}$ in the sense 
of \cite{cps}. For the explicit construction of our basis we however need a 
notion like ``weight spaces'' such that 
\fullref{lem-cell1} makes sense.

The most famous example of such a category is the BGG category 
$\boldsymbol{\cal{O}}=\boldsymbol{\cal{O}}(\fg)$ attached to 
a complex semisimple or reductive Lie algebra $\fg$ with a 
corresponding Cartan $\mathfrak{h}$ and fixed Borel subalgebra $\fb$. We denote 
by $\Delta(\lambda)\in\boldsymbol{\cal{O}}$ the 
Verma module attached to $\lambda\in\fh$. 
In the same vein, pick a parabolic $\fp\supset\fb$ and denote for any $\fp$-dominant weight $\lambda$ the 
corresponding parabolic Verma module by $\Delta^{\fp}(\lambda)$. It is the unique quotient 
of the Verma module $\Delta(\lambda)$ which is locally $\fp$-finite, i.e. contained in 
the parabolic category 
$\boldsymbol{\cal{O}}^{\fp}=\boldsymbol{\cal{O}}^{\fp}(\fg)\subset \boldsymbol{\cal{O}}$ 
(see e.g. \cite{hum}).

There is a contravariant, character preserving duality functor 
${}^{\vee}\colon\boldsymbol{\cal{O}}^{\fp}\to\boldsymbol{\cal{O}}^{\fp}$ 
which allows us to set 
$\nabla^{\fp}(\lambda)=\Delta^{\fp}(\lambda)^{\vee}$.
Hence, we can play the same game again since the 
$\boldsymbol{\cal{O}}$-tilting theory works in a very similar 
fashion as for $\Mod{\Uq}$ (see \cite[Chapter 11]{hum} and the references therein). 
In particular, we have indecomposable $\boldsymbol{\cal{O}}$-tilting modules 
$T(\lambda)$ for any $\lambda\in\mathfrak{h}^{*}$. Similarly for 
$\boldsymbol{\cal{O}}^{\fp}$ 
giving an indecomposable $\boldsymbol{\cal{O}}^{\fp}$-tilting module 
$T(\lambda)$ for any $\fp$-dominant $\lambda\in\mathfrak{h}^{*}$.

We give a few examples where our approach leads to cellular 
structures on interesting algebras. For this purpose, 
let $\fp=\fb$ and $\lambda=0$. Then $T(0)$ has Verma 
factors of the form $\Delta(w.0)$ (for $w\in W$, where $W$ is the Weyl group associated to $\fg$). 
Each of these appears with multiplicity $1$. Hence, 
$\dim(\End_{\boldsymbol{\cal{O}}}(T(0)))=|W|$ by the analog 
of \eqref{eq-dimhom}. Then we have
$\End_{\boldsymbol{\cal{O}}}(T(0))\cong S(\fh)/S^W_+$. The algebra $S(\fh)/S^W_+$ is called 
the coinvariant algebra. (For the notation, the conventions and the 
result see \cite{soe1} --- this is Soergel's famous Endomorphismensatz.) Hence, our main 
theorem implies that $S(\fh)/S^W_+$ is cellular, 
which is no big surprise since all finite-dimensional, 
commutative algebras are cellular, see \cite[Proposition 3.5]{kx}.

There is also a quantum version of this result: replace 
$\boldsymbol{\cal{O}}$ by its quantum cousin 
$\boldsymbol{\cal{O}}_q$ from \cite{am} 
(which is the analog of $\boldsymbol{\cal{O}}$ for $\Uq(\fg)$). 
This works over any field $\K$ with $\Char(\K)=0$ 
and any $q\in\K^{\ast}-\{1\}$ (which can be deduced 
from Section 6 therein).
There is furthermore a characteristic $p$ version of this result: consider the
$G$-tilting module $T(p\rho)$ in the category of finite-dimensional $G$-modules 
(here $G$ is an almost simple, simply connected algebraic group over $\K$ 
with $\Char(\K)=p$). 
Its endomorphism algebra is isomorphic to 
the corresponding coinvariant algebra over $\K$, see \cite[Proposition 19.8]{ajs}.

Returning to $\K=\C$, we can generalize 
the example of the coinvariant algebra. To this end, note that, if $T$ is 
an $\boldsymbol{\cal{O}}^{\fp}$-tilting module, then so is $T\otimes M$ for 
any finite-dimensional $\fg$-module $M$, 
see \cite[Proposition 11.1 and Section 11.8]{hum} 
(and the references therein). Thus, 
$\End_{\boldsymbol{\cal{O}}^{\fp}}(T\otimes M)$ is cellular by our main 
theorem.

A special case is: $\fg$ is of classical type, 
$T=\Delta^{\fp}(\lambda)$ is simple (hence, $\boldsymbol{\cal{O}}^{\fp}$-tilting), 
$V$ is the vector 
representation of $\fg$ and $M=V^{\otimes d}$. Let first $\fg=\gll{n}$ 
with standard Borel $\fb$ 
and parabolic $\fp$ of block size $(n_1,\dots,n_{\ell})$. Then one 
can find a certain 
$\fp$-dominant weight $\lambda_{\mathrm{I}}$, called Irving-weight, such that 
$T=\Delta^{\fp}(\lambda_{\mathrm{I}})$ is $\boldsymbol{\cal{O}}^{\fp}$-tilting. Moreover, 
$\End_{\boldsymbol{\cal{O}}^{\fp}}(T\otimes V^{\otimes d})$ is isomorphic to a sum of blocks of 
cyclotomic quotients of the degenerate affine Hecke algebra 
$\cal{H}_d/\Pi_{i=1}^{\ell}(x_i-n_i)$, 
see \cite[Theorem 5.13]{bk}.
In the special case of level $\ell=2$, these algebras can be explicitly 
described in terms of generalizations of Khovanov's arc algebra 
(which Khovanov introduced in \cite{kh4} 
to give an algebraic structure underlying Khovanov homology 
and which categorifies the Temperley--Lieb algebra $\cal{TL}_d(\delta)$) 
and have an 
interesting representation theory, 
see \cite{bs1}, \cite{bs2}, \cite{bs3} and \cite{bs4}.
A consequence of this is that, using the results 
from \cite[Theorem 6.9]{sa} and \cite[Theorem 1.1]{sast}, one can realize the 
walled Brauer algebra from \fullref{subsub-brauer} for arbitrary parameter $\delta\in\Z$ 
as endomorphism algebras of some 
$\boldsymbol{\cal{O}}^{\fp}$-tilting module and hence, using our main theorem, 
deduce cellularity again.

If $\fg$ is of another classical type, then the role of the 
(cyclotomic quotients of the) degenerate affine Hecke algebra 
is played by (cyclotomic quotients of) 
degenerate BMW algebras or so-called (cyclotomic quotients of) 
$\VW$-algebras (also called Nazarov--Wenzl algebras). These are still poorly 
understood and technically quite involved, see \cite{amr}. In \cite{es} special examples 
of level $\ell=2$ quotients were studied and realized as endomorphism algebras 
of some $\boldsymbol{\cal{O}}^{\fp}(\soo{2n})$-tilting 
module 
$\Delta^{\fp}(\underline{\delta})\otimes V\in\boldsymbol{\cal{O}}^{\fp}(\soo{2n})$ 
where $V$ is the vector representation of $\soo{2n}$, 
$\underline{\delta}=\frac{\delta}{2}\sum_{i=1}^n\epsilon_i$ 
and $\fp$ is a maximal parabolic 
subalgebra of type $A$ 
(Theorem B loc. cit.). 
Hence, our theorem implies cellularity of these algebras.
Soergel's theorem is therefore just a shadow of a 
rich world of endomorphism algebras whose cellularity 
can be obtained from our approach.

Our methods also apply to (parabolic) category 
$\boldsymbol{\cal{O}}^{\fp}(\hat{\fg})$ attached 
to an affine Kac--Moody algebra $\hat{\fg}$ over $\K$ and 
related categories. In particular, one 
can consider a (level-dependent) quotient 
$\hat{\fg}_{\kappa}$ of $\Uu(\hat{\fg})$ and a category, denoted by
$\boldsymbol{\mathrm{O}}^{\nu,\kappa}_{\K,\tau}$, attached 
to it (we refer the reader to \cite[Sections 5.2 and 5.3]{rsvv} 
for the details). 
Then there 
is a subcategory 
$\boldsymbol{\mathrm{A}}^{\nu,\kappa}_{\K,\tau}\subset \boldsymbol{\mathrm{O}}^{\nu,\kappa}_{\K,\tau}$ 
and a $\boldsymbol{\mathrm{A}}^{\nu,\kappa}_{\K,\tau}$-tilting module 
$\boldsymbol{\mathrm{T}}_{\K,d}$
defined in Section 5.5 loc. cit. such that
\[
\Phi_{\mathrm{aff}}\colon\boldsymbol{\mathrm{H}}^s_{\K,d}
\rightarrow\End_{\boldsymbol{\mathrm{A}}^{\nu,\kappa}_{\K,\tau}}(\boldsymbol{\mathrm{T}}_{\K,d})\quad\text{and}\quad\Phi_{\mathrm{aff}}\colon\boldsymbol{\mathrm{H}}^s_{\K,d}
\xrightarrow{\cong}
\End_{\boldsymbol{\mathrm{A}}^{\nu,\kappa}_{\K,\tau}}(\boldsymbol{\mathrm{T}}_{\K,d}),\text{ if }\nu_p\geq d, p=1,\dots N,
\]
see \cite[Theorem 5.37 and Proposition 8.1]{rsvv}.
Here $\boldsymbol{\mathrm{H}}^s_{\K,d}$ denotes an appropriate cyclotomic quotient of the 
affine Hecke algebra. Again, our main theorem 
applies for $\boldsymbol{\mathrm{H}}^s_{\K,d}$ in case $\nu_p\geq d$.

\subsubsection{Graded cellular structures}\label{subsub-non}

A striking property which arises in the context of 
(parabolic) category $\boldsymbol{\cal{O}}$ (or $\boldsymbol{\cal{O}}^{\fp}$) is that 
all the endomorphism algebras from \fullref{subsub-cato} can be 
equipped with a $\Z$-grading as in \cite{st2} 
arising from the Koszul grading of category $\boldsymbol{\cal{O}}$ 
(or of $\boldsymbol{\cal{O}}^{\fp}$). 
We might choose 
our cellular basis compatible with this grading and obtain a grading on the endomorphism 
algebras turning them into graded cellular algebras in 
the sense of \cite[Definition 2.1]{hm}.

For the cyclotomic quotients this grading is non-trivial 
and in fact is the type $A$ KL--R grading 
in the spirit of Khovanov and Lauda and independently Rouquier 
(see \cite{kl1} and \cite{kl3} or \cite{rou}), which can be seen as 
a grading on cyclotomic quotients of degenerate affine Hecke algebras, see \cite{bk1}.
See \cite{bs3} for level $\ell=2$ and \cite{hm2} for all levels where the authors construct 
explicit graded cellular bases. For gradings on (cyclotomic quotients of)
$\VW$-algebras see 
\cite[Section 5]{es} and for gradings on Brauer algebras see \cite{ehst} or \cite{li}.

In the same spirit, it should be possible to obtain 
the higher level analogs of the generalizations of 
Khovanov's arc algebra, known as $\sll{n}$-web (or, 
alternatively, $\gll{n}$-web) algebras (see \cite{mpt} 
and \cite{mack1}), from our setup as well using the connections from cyclotomic KL--R algebras 
to these algebras in \cite{tub3} and \cite{tub4}. Although details still need to be worked out, 
this can be seen as the categorification of the connections to the spiders 
from \fullref{subsub-spid}: the spiders provide the setup to study the 
corresponding Reshetikhin--Turaev $\sll{n}$-link polynomials; the $\sll{n}$-web algebras 
provide the algebraic setup to study the 
Khovanov--Rozansky $\sll{n}$-link homologies. This would emphasize the connection 
between our work and low-dimensional topology.

\subsection{(Graded) cellular structures and the Temperley--Lieb algebras: a comparison}\label{sub-graded}
Finally we want to present one explicit 
example, the Temperley--Lieb algebras, which is 
of particular interest in low-dimensional 
topology and categorification. Our main goal 
is to construct new (graded) cellular bases, and use our 
approach to establish semisimplicity conditions, 
and construct and compute the dimensions 
of its simple modules in new ways.

We start by briefly recalling the necessary definitions. 
The reader unfamiliar with these algebras might consider
for example \cite[Section 6]{gl} (or \cite{astproofs}, where 
we recall the basics in detail using the usual Temperley--Lieb diagrams and our notation).

Fix $\delta=q+q^{-1}$ for $q\in\K^{\ast}$.\footnote{The $\sll{2}$ case works with any 
$q\in\K^{\ast}$, including even roots of unity, 
see e.g. \cite[Definition 2.3]{at}.}
Recall that the \textit{Temperley--Lieb algebra} $\cal{TL}_d(\delta)$ in $d$ strands 
with parameter $\delta$ is the free diagram algebra over $\K$ with basis 
consisting of all possible non-intersecting tangle diagrams with $d$ bottom and top 
boundary points modulo boundary 
preserving isotopy and the local relation for evaluating circles given 
by the parameter\footnote{We point out that there are two different 
conventions about circle evaluations in the literature: evaluating 
to $\delta$ or to $-\delta$. We use the first convention 
because we want to stay close to the cited literature.} $\delta$.

Recall from \fullref{subsub-othersl2} 
(whose notation we use now) that, by quantum Schur--Weyl duality, 
we can use \fullref{thm-cell} to obtain cellular bases of 
$\cal{TL}_d(\delta)\cong\End_{\Uq}(T)$ 
(we fix the isomorphism coming from quantum Schur--Weyl duality from now on). 
The aim now is to compare our 
cellular bases to the one given by 
Graham and Lehrer in \cite[Theorem 6.7]{gl}, where we point out 
that we do not obtain their cellular basis: 
our cellular basis depends for instance on 
whether $\cal{TL}_d(\delta)$ is semisimple or not. 
In the non-semisimple case, at least for $\K=\C$, we obtain a non-trivially $\Z$-graded 
cellular basis in the sense of \cite[Definition 2.1]{hm}, see 
\fullref{prop-cellTL4}.

Before stating our cellular basis, we provide a criterion which tells precisely whether 
$\cal{TL}_d(\delta)$ is semisimple or not. 
Recall that there is a known criteria for which Weyl modules $\Delta_q(i)$ 
are simple, see e.g. \cite[Proposition 2.7]{at}.

\begin{prop}(\textbf{Semisimplicity criterion for $\cal{TL}_d(\delta)$.})\label{prop-tlsemisimple}
We have the following.
\begin{enumerate}[label=(\alph*)]

\item \label{semia} Let $\delta\neq 0$. 
Then $\cal{TL}_d(\delta)$ is 
semisimple if and only if 
$[i]=q^{1-i}+\cdots+q^{i-1}\neq 0$ for all $i=1,\dots,d$ if and only if $q$ is 
not a root of unity with $d<l=\mathrm{ord}(q^2)$, or $q=1$ and $\Char(\K)>d$.

\item \label{semib} Let $\Char(\K)=0$. 
Then $\cal{TL}_d(0)$ is semisimple if and only if $d$ is odd (or $d=0$).

\item \label{semic} Let $\Char(\K)=p>0$. 
Then $\cal{TL}_d(0)$ is semisimple if and only if $d\in\{1,3,5,\dots,2p-1\}$ (or $d=0$).\makeqed
\end{enumerate}
\end{prop}

\begin{proof}
\ref{semia}: We want to show that $T=V^{\otimes d}$ decomposes into simple 
$\Uq$-modules if and only if $d<l$, or $q=1$ and $\Char(\K)>d$, which is clearly 
equivalent to the non-vanishing of the $[i]$'s.

Assume that $d<l$. Since the maximal $\Uq$-weight of 
$V^{\otimes d}$ is $d$ and since all Weyl $\Uq$-modules $\Delta_q(i)$ 
for $i<l$ are simple, 
we see that all indecomposable summands of 
$V^{\otimes d}$ are simple.

Otherwise, if $l\leq d$, then $T_q(d)$ (or $T_q(d-2)$ 
in the case $d\equiv -1\;\mathrm{mod}\;l$) is a non-simple, 
indecomposable summand of $V^{\otimes d}$ (note that this arguments 
fails if $l=2$, i.e. $\delta=0$).

The case $q=1$ works similarly, and 
we can now use \fullref{thm-cellsemisimple} to finish the proof of \ref{semia}.

\ref{semib}: Since $\delta=0$ if and only if $q=\pm\sqrt[2]{-1}$, we can use the 
linkage from e.g. \cite[Theorem 2.23]{at} in the case $l=2$ to 
see that $T=V^{\otimes d}$ decomposes 
into a direct sum of simple $\Uq$-modules if and only if $d$ is odd (or $d=0$).
This implies that $\cal{TL}_d(0)$ is semisimple if and only if $d$ is odd (or $d=0$) by 
\fullref{thm-cellsemisimple}.

\ref{semic}: If $\Char(\K)=p>0$ and $\delta=0$ (for $p=2$ this is equivalent to $q=1$), 
then we have $\Delta_q(i)\cong L_q(i)$ if and only if $i=0$ or $i\in\{2ap^{n}-1\mid n\in\Z_{\geq 0},1\leq a<p\}$. In 
particular, this means that for $d\geq 2$ we have that either $T_q(d)$ or $T_q(d-2)$ is 
a simple $\Uq$-module if and only if $d\in\{3,5,\dots,2p-1\}$. Hence, using the same reasoning 
as above, we see that $T=V^{\otimes d}$ is 
semisimple if and only if $d\in\{1,3,5,\dots,2p-1\}$ (or $d=0$). 
By \fullref{thm-cellsemisimple} we see that $\cal{TL}_d(0)$ is semisimple 
if and only if $d\in\{1,3,5,\dots,2p-1\}$ (or $d=0$).
\end{proof}

\begin{ex}\label{ex-cpoly}
We have that $[k]\neq 0$ for all $k=1,2,3$ is satisfied 
if and only if $q$ is not a fourth or a sixth root of unity.
By \fullref{prop-tlsemisimple} we see that $\cal{TL}_3(\delta)$ is semisimple as long as 
$q$ is not one of these values from above. The other way around is only true for $q$ being 
a sixth root of unity (the conclusion from semisimplicity to non-vanishing of the quantum numbers 
above does not work in the case $q=\pm\sqrt[2]{-1}$).
\end{ex}

\begin{rem}\label{rem-deltazero}
The semisimplicity criterion for $\cal{TL}_d(\delta)$ 
was already already found, using quite different methods, 
in \cite[Section 5]{west} in the case $\delta\neq 0$, and in the case $\delta=0$
in \cite[Chapter 7]{martin} 
or \cite[above Proposition 4.9]{rsa}.
For us it is an easy application of \fullref{thm-cellsemisimple}.
\end{rem}

A direct consequence of \fullref{prop-tlsemisimple} is that the 
Temperley--Lieb algebra $\cal{TL}_d(\delta)$ for $q\in\K^{\ast}-\{1\}$ not 
a root of unity is semisimple (or $q=\pm 1$ and $\Char(\K)=0$), 
regardless of $d$.

\subsubsection{Temperley--Lieb algebra: the semisimple case}\label{subsub-semiTL}

Assume $q\in\K^{\ast}-\{1\}$ is not a root of unity 
(or $q=\pm 1$ and $\Char(\K)=0$). 
Thus, we are in the semisimple case.

Let us compare our cell datum $(\P,\Tt,\Ct,\I)$ to
the one of Graham and Lehrer 
(indicated by a subscript $\TL$) 
from \cite[Section 6]{gl}. They have the poset $\P_{\TL}$ consisting of 
all length-two partitions of $d$, and we have the poset $\P$ 
consisting of all $\lambda\in X^+$ such that $\Dl$ is a factor of $T$.
The two sets 
are clearly the same: an element $\lambda=(\lambda_1,\lambda_2)\in\P_{\TL}$ 
corresponds to $\lambda_1-\lambda_2\in\P$. 
Similarly, an inductive reasoning shows that 
$\Tt_{\TL}$
(standard fillings of the Young 
diagram associated to $\lambda$) is also the same as our $\Tt$ 
(to see this one can use the facts listed in \cite[Section 2]{at}).
One directly checks that the $\K$-linear anti-involution 
$\I_{\TL}$ (turning diagrams upside-down) is also our involution $\I$.
Thus, except for $\Ct$ and $\Ct_{\TL}$, the cell 
data agree.

In order to state how our cellular bases 
for $\cal{TL}_d(\delta)$ look like, 
recall that the so-called 
\textit{generalized Jones--Wenzl projectors} $JW_{\vec{\epsilon}}$
are indexed by $d$-tuples (with $d>0$) of the form 
$\vec{\epsilon}=(\epsilon_1,\dots, \epsilon_d)\in\{\pm 1\}^d$ such that 
$\sum_{j=1}^k\epsilon_j\geq 0$ for all $k=1,\dots,d$, 
see e.g. \cite[Section 2]{ch}.
In case $\vec{\epsilon}=(1,\dots,1)$, one recovers the 
usual Jones--Wenzl projectors introduced by Jones in \cite{jones} 
and then further studied by Wenzl in \cite{wenzl}.

Now, in \cite[Proposition 2.19 and Theorem 2.20]{ch} it is 
shown that there exist non-zero scalars $a_{\vec{\epsilon}}\in\K$ such that 
$JW_{\vec{\epsilon}}^{\prime}=a_{\vec{\epsilon}}JW_{\vec{\epsilon}}$ are well-defined 
idempotents forming a complete set of mutually orthogonal, primitive idempotents in 
$\cal{TL}_d(\delta)$. (The authors of \cite{ch} work 
over $\C$, but as long as $q\in\K^{\ast}-\{1\}$ is not a root 
of unity their arguments work in our setup as well.) These project 
to the summands of $T=V^{\otimes d}$ of the 
form $\Delta_q(i)$ for $i=\sum_{j=1}^k\epsilon_j$. In particular, the usual 
Jones--Wenzl projectors project to the highest weight summand 
$\Delta_q(d)$ of $T=V^{\otimes d}$.

\begin{prop}(\textbf{(New) cellular bases.})\label{prop-cellTL2}
The datum given by the quadruple 
$(\P,\Tt,\Ct,\I)$ 
for $\cal{TL}_d(\delta)\cong\End_{\Uq}(T)$ is 
a cell datum for $\cal{TL}_d(\delta)$.
Moreover, $\Ct\neq \Ct_{\TL}$ for all $d>1$ and all choices involved in the definition 
of $\mathrm{im}(\Ct)$.
In particular, there is a choice such that all generalized Jones--Wenzl projectors 
$JW_{\vec{\epsilon}}^{\prime}$ are 
part of $\mathrm{im}(\Ct)$.\makeqed
\end{prop}

\begin{proof}
That we get a cell datum as stated follows from \fullref{thm-cell} and 
the discussion above.

That our cellular basis $\Ct$ will never be $\Ct_{\TL}$ for $d>1$ is due to the fact that 
Graham and Lehrer's cellular basis always contains the identity 
(which corresponds to the unique standard filling of the Young diagram associated to $\lambda=(d,0)$).

In contrast, let $\lambda_k=(d-k,k)$ for $0\leq k\leq\lfloor \frac{d}{2}\rfloor$. Then
\begin{equation}\label{eq-something2}
T=V^{\otimes d}
\cong \Delta_q(d)\oplus\bigoplus_{0< k\leq\lfloor \frac{d}{2}\rfloor}\Delta_q(d-2k)^{\oplus m_{\lambda_k}}
\end{equation}
for some multiplicities $m_{\lambda_k}\in\Z_{>0}$, we see that 
for $d>1$ the identity 
is never part of any of our bases: all the $\Delta_q(i)$'s 
are simple $\Uq$-modules and each 
$c_{ij}^{k}$ factors only through $\Delta_q(k)$.
In particular, the basis element $c_{11}^{\lambda}$ 
for $\lambda=\lambda_d$ has to be (a scalar multiple) of $JW_{(1,\dots,1)}$.

As in \fullref{subsub-semi} we can choose for $\Ct$ an Artin--Wedderburn basis 
of $\cal{TL}_d(\delta)\cong\Endrr$. 
Hence, by the above, the corresponding basis consists of 
the projectors $JW_{\vec{\epsilon}}$. 
\end{proof}

Note the following classification result 
(see for example \cite[Corollary 5.2]{rsa} for $\K=\C$).

\begin{cor}\label{cor-dim1}
We have a complete set of pairwise non-isomorphic, simple $\cal{TL}_d(\delta)$-modules 
$L(\lambda)$, where $\lambda=(\lambda_1,\lambda_2)$ 
is a length-two partition of $d$.
Moreover,
$
\dim(L(\lambda))=|\mathrm{Std}(\lambda)|,
$
where $\mathrm{Std}(\lambda)$ is the set of 
all standard tableaux of shape $\lambda$.\makeqed
\end{cor}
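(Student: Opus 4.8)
The plan is to read the statement off the general machinery of Section~\ref{sec-endoringsrep}, applied to the isomorphism $\cal{TL}_d(\delta)\cong\End_{\Uq(\sll{2})}(T)$ with $T=V^{\otimes d}$, under the semisimplicity hypothesis in force in~\ref{subsub-semiTL}. First I would note that, since $\Mod{\Uq(\sll{2})}$ is semisimple, $T$ is a semisimple $\Uq(\sll{2})$-module and each of its indecomposable summands is of the form $\Dl\cong\Ll\cong\Nl\cong\Tl$. Consequently, for every $\lambda\in\P$ the module $\Tl=\Dl$ is a direct summand of $T$, so $\vartheta^{\lambda}\neq 0$ by Proposition~\ref{prop-thefirst}; that is, $\P_0=\P$. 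Theorem~\ref{thm-cellclass} then yields that $\{L(\lambda)\mid\lambda\in\P\}$ is a complete set of pairwise non-isomorphic simple $\cal{TL}_d(\delta)$-modules, and Theorem~\ref{thm-dim} gives $\dim(L(\lambda))=m_{\lambda}=(T:\Dl)$.

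Next I would spell out the identification of index sets from~\ref{subsub-semiTL}: the assignment $(\mu_1,\mu_2)\mapsto\mu_1-\mu_2$ is a bijection from $\Lambda^+(2,d)$ onto $\P$, whose inverse sends a $\Uq(\sll{2})$-weight $i$ (necessarily with $0\le i\le d$ and $i\equiv d\pmod 2$) to the unique two-row Young diagram with $\mu_1+\mu_2=d$ and $\mu_1-\mu_2=i$. Under this bijection the corollary reduces to the claim that $(V^{\otimes d}:\Delta_q(i))=|\mathrm{Std}(\mu)|$ whenever $\mu\leftrightarrow i$.

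To prove that claim I would use the branching rule: in the semisimple case $\Delta_q(j)\otimes V\cong\Delta_q(j+1)\oplus\Delta_q(j-1)$ for $j\ge 1$ and $\Delta_q(0)\otimes V\cong\Delta_q(1)$ (equivalently, set $\Delta_q(-1)=0$), as recalled in~\ref{subsub-semiTL}. Iterating this $d$ times shows that $(V^{\otimes d}:\Delta_q(i))$ equals the number of sequences $0=i_0,i_1,\dots,i_d=i$ with all $i_k\ge 0$ and $|i_k-i_{k-1}|=1$. Such a sequence is precisely the same datum as a chain of two-row Young diagrams $\emptyset=\mu^{(0)}\subset\mu^{(1)}\subset\dots\subset\mu^{(d)}=\mu$ with $|\mu^{(k)}|=k$ (put $i_k=\mu^{(k)}_1-\mu^{(k)}_2$), and hence, recording in each newly added box the step at which it appears, the same as a standard tableau of shape $\mu$. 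Therefore $(V^{\otimes d}:\Delta_q(i))=|\mathrm{Std}(\mu)|$, and together with the previous paragraphs this proves the corollary.

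The only content-bearing step is the branching-rule count $(V^{\otimes d}:\Delta_q(i))=|\mathrm{Std}(\mu)|$; everything else is bookkeeping with Theorems~\ref{thm-cellclass} and~\ref{thm-dim} and Proposition~\ref{prop-thefirst}. Since this count is the classical $\mathfrak{sl}_2$ branching computation and was already indicated in~\ref{subsub-semiTL}, I do not expect a real obstacle — the one point that needs care is keeping the boundary behaviour at $i=0$ and the parity constraint $i\equiv d\pmod 2$ consistent, which is exactly what the convention $\Delta_q(-1)=0$ encodes.
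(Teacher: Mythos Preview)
Your argument is correct and follows essentially the same route as the paper: apply Theorems~\ref{thm-cellclass} and~\ref{thm-dim} to the cellular structure on $\cal{TL}_d(\delta)\cong\End_{\Uq(\sll{2})}(V^{\otimes d})$ and then identify $m_{\lambda}$ with $|\mathrm{Std}(\lambda)|$. The paper's proof is terser---it simply asserts $m_{\lambda}=|\mathrm{Std}(\lambda)|$, relying on the ``inductive reasoning'' already sketched in~\ref{subsub-semiTL}---whereas you spell out the branching-rule/lattice-path count explicitly, which is a welcome elaboration rather than a different method.
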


\begin{proof}
This follows directly from \fullref{prop-cellTL2} and 
\fullref{thm-cellclass} and \fullref{thm-dim} because 
we have
$m_{\lambda}=|\mathrm{Std}(\lambda)|$.
\end{proof}

\subsubsection{Temperley--Lieb algebra: the non-semisimple case}\label{subsub-nonsemiTL}

Let us assume that we have fixed $q\in\K^{\ast}-\{1,\pm\sqrt[2]{-1}\}$ to be a critical 
value such that 
$[k]=0$ for some $k=1,\dots,d$. Then, by \fullref{prop-tlsemisimple}, the 
algebra $\cal{TL}_d(\delta)$ is no longer semisimple.
In particular, to the best of our knowledge, there is no 
diagrammatic analog of the Jones--Wenzl projectors in general.

\begin{prop}(\textbf{(New) cellular basis --- the second.})\label{prop-cellTL3}
The datum $(\P,\Tt,\Ct,\I)$ with $\Ct$ as in \fullref{thm-cell} 
for $\cal{TL}_d(\delta)\cong\End_{\Uq}(T)$ is a cell datum for $\cal{TL}_d(\delta)$.
Moreover, $\Ct\neq \Ct_{\TL}$ for all $d>1$ and all choices involved in 
the definition of our basis.
Thus, there is a choice such that all generalized, non-semisimple 
Jones--Wenzl projectors are 
part of $\mathrm{im}(\Ct)$.\makeqed
\end{prop}

\begin{proof}
As in the proof of \fullref{prop-cellTL2} and left to the reader.
\end{proof}

Hence, 
directly from \fullref{prop-cellTL3} and 
\fullref{thm-cellclass} and \fullref{thm-dim}, we obtain:

\begin{cor}\label{cor-dim2} 
We have a complete set of pairwise non-isomorphic, simple $\cal{TL}_d(\delta)$-modules 
$L(\lambda)$, 
where $\lambda=(\lambda_1,\lambda_2)$ is a length-two partition of $d$.
Moreover, 
$
\dim(L(\lambda))=m_{\lambda},
$
where $m_{\lambda}$ is the multiplicity of 
$T_q(\lambda_1-\lambda_2)$ 
as a summand of $T=V^{\otimes d}$.\qedmake
\end{cor}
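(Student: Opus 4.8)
The plan is to read this off the general theory of Section~\ref{sec-endoringsrep} in exactly the way Corollary~\ref{cor-dim1} was obtained in the semi-simple case; the only real work is the translation of the labelling sets. First I would recall, as in~\ref{subsub-othersl2}, that quantum Schur--Weyl duality gives $\cal{TL}_d(\delta)\cong\End_{\Uq(\sll{2})}(T)$ for $T=V^{\otimes d}$, so that by Proposition~\ref{prop-cellTL3} (equivalently Theorem~\ref{thm-cell}) this is a cellular algebra with the cell datum $(\P_{\TL},\Tt_{\TL},\Ct,\I_{\TL})$ discussed above. In particular the abstract consequences of cellularity from Section~\ref{sec-endoringsrep} apply to it.

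The second step is the identification of the index sets. For $\Uq(\sll{2})$ one has $X^+\cong\N$, and the poset $\P=\{\lambda\in X^+\mid (T:\Nl)=(T:\Dl)\neq 0\}$ from Subsection~\ref{sub-celltilting} is matched with $\Lambda^+(2,d)$ via $(\lambda_1,\lambda_2)\leftrightarrow\lambda_1-\lambda_2$, exactly as explained in~\ref{subsub-semiTL}. Under this matching the distinguished subset $\P_0=\{\lambda\in\P\mid\vartheta^{\lambda}\neq 0\}$ of~\eqref{eq-po} coincides, by the Summand criterion (Proposition~\ref{prop-thefirst}), with the set of those $\lambda\in\Lambda^+(2,d)$ for which $\Tl$ is a direct summand of $T=V^{\otimes d}$.

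With these identifications in place the assertion is immediate from two results of Section~\ref{sec-endoringsrep}: Theorem~\ref{thm-cellclass} says that $\{L(\lambda)\mid\lambda\in\P_0\}$ is a complete set of pairwise non-isomorphic simple $\End_{\Uq(\sll{2})}(T)$-modules, hence of simple $\cal{TL}_d(\delta)$-modules; and Theorem~\ref{thm-dim} gives $\dim(L(\lambda))=m_{\lambda}$, where $m_{\lambda}$ is by definition~\eqref{eq-love} the multiplicity of $\Tl$ in $T$. There is essentially no obstacle here: the hypothesis $\K=\C$ plays no role in the abstract statement (Theorems~\ref{thm-cellclass} and~\ref{thm-dim} hold over any field) and is inherited from the surrounding discussion, where it makes the multiplicities $m_{\lambda}$ computable via the parabolic Kazhdan--Lusztig algorithm of Subsection~\ref{sub-char}. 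One may therefore quote the proof of Corollary~\ref{cor-dim1} nearly verbatim, with the only change that the semi-simple value $m_{\lambda}=|\mathrm{Std}(\lambda)|$ is replaced by the general tilting multiplicity $m_{\lambda}$.
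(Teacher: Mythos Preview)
Your proposal is correct and follows essentially the same approach as the paper: the paper's own proof simply reads ``As in Corollary~\ref{cor-dim1}'', which amounts to invoking the cellular structure from Proposition~\ref{prop-cellTL3} together with Theorems~\ref{thm-cellclass} and~\ref{thm-dim}, precisely as you have done. Your remark that $\P_0$ corresponds to those $\lambda$ with $\Tl$ a summand of $T$ via Proposition~\ref{prop-thefirst} makes explicit the one identification that the paper leaves implicit.
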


Note that we can do better:  
one gets a decompositions
\begin{equation}\label{eq-gradeddecom}
\T\cong \T_{-1}\oplus \T_{0} \oplus \T_{1} \oplus\cdots\oplus \T_{l-3} \oplus \T_{l-2} \oplus \T_{l-1},
\end{equation}
where the blocks $\T_{-1}$ and $\T_{l-1}$ are semisimple if $\K=\C$. 
(This follows from 
the linkage principle. For notation and the statement see \cite[Section 2]{at}.)

Fix $\K=\C$. As explained in \cite[Section 3.5]{at} each block in the 
decomposition \eqref{eq-gradeddecom} can be equipped with a non-trivial $\Z$-grading coming 
from the zig-zag algebra from \cite{hk1}. Hence, we have the following.

\begin{lem}\label{lem-grading}
The $\C$-algebra $\Endrr$ can be equipped with a non-trivial $\Z$-grading. 
Thus, $\cal{TL}_d(\delta)$ over $\C$ 
can be equipped with a non-trivial $\Z$-grading.\makeqed
\end{lem}

\begin{proof}
The second statement follows directly from the first using quantum Schur--Weyl duality. 
Hence, we only need to show the first.

Note that $T=V^{\otimes d}$ decomposes as in \eqref{eq-something2}, 
but with $T_q(k)$'s instead of $\Delta_q(k)$'s, and we can order 
this decomposition by blocks. Each block carries a $\Z$-grading 
coming from the zig-zag 
algebra, as explained in \cite[Section 3]{at}.
In particular, we can choose the basis elements $c_{ij}^{\lambda}$ in such a 
way that we get the $\Z$-graded basis obtained in Corollary 4.23 therein.
Since there is no interaction between different blocks, the statement follows.
\end{proof}

Recall from \cite[Definition 2.1]{hm} that a $\Z$-graded cell datum of a 
$\Z$-graded algebra is a cell datum for the algebra together with 
an additional \textit{degree function}
$
\mathrm{deg}\colon\coprod_{\lambda\in\P}\Ttt\to \Z,
$
such that $\mathrm{deg}(c_{ij}^{\lambda})=\mathrm{deg}(i)+\mathrm{deg}(j)$.
For us the choice of $\mathrm{deg}(\cdot)$ is as follows.

If $\lambda\in\P$ is in one of the semisimple 
blocks, then we simply set $\mathrm{deg}(i)=0$ for all $i\in\Ttt$.

Assume that $\lambda\in\P$ is not in the semisimple blocks.
It is known that 
every $\Tl$ has precisely two Weyl factors. The $g^{\lambda}_i$ that map 
$\Dl$ into a higher $T_q(\mu)$ should be indexed 
by a $1$-colored $i$ whereas 
the $g^{\lambda}_i$ mapping $\Dl$ into $\Tl$ should have 
$0$-colored $i$. Similarly for the $f^{\lambda}_j$'s.
Then the degree of the elements $i\in\Ttt$ 
should be the corresponding color.
We get the following. (Here $\Ct$ is as in \fullref{thm-cell}.)

\begin{prop}(\textbf{Graded cellular basis.})\label{prop-cellTL4}
The datum $(\P,\Tt,\Ct,\I)$ 
supplemented with the function $\mathrm{deg}(\cdot)$ from above is a $\Z$-graded cell 
datum for the $\C$-algebra $\cal{TL}_d(\delta)\cong\Endrr$.\makeqed
\end{prop}

\begin{proof}
The hardest part is cellularity which directly follows 
from \fullref{thm-cell}. That the quintuple 
$(\P,\Tt,\Ct,\I,\mathrm{deg})$
gives a $\Z$-graded cell 
datum follows from the construction.
\end{proof}

\begin{rem}
Our grading and the one 
found by Plaza and Ryom-Hansen in \cite{prh} agree (up to a shift of 
the indecomposable summands). To see this, note that 
our algebra is isomorphic to the algebra $K_{1,n}$ studied in \cite{bs1} which 
is by (4.8) therein and \cite[Theorem 6.3]{bs3} a quotient of some 
particular cyclotomic 
KL--R algebra (the compatibility of the grading 
follows for example from \cite[Corollary B.6]{hm2}). The same holds, by 
construction, for the grading in \cite{prh}.
\end{rem}
%
\bibliographystyle{plainurl}
\bibliography{cell-tilt}

\begin{thebibliography}{10}

\bibitem{an1}
H.H. Andersen.
\newblock Tensor products of quantized tilting modules.
\newblock {\em Comm. Math. Phys.}, 149(1):149--159, 1992.

\bibitem{an0}
H.H. Andersen$\phantom{a}\!\!\!$.
\newblock Filtrations and tilting modules.
\newblock {\em Ann. Sci. \'Ecole Norm. Sup. (4)}, 30(3):353--366, 1997.
\newblock \href {http://dx.doi.org/10.1016/S0012-9593(97)89924-7}
  {\path{doi:10.1016/S0012-9593(97)89924-7}}.

\bibitem{an2}
H.H. Andersen$\phantom{a}\!\!\!$.
\newblock The strong linkage principle for quantum groups at roots of $1$.
\newblock {\em J. Algebra}, 260(1):2--15, 2003.
\newblock \href {http://dx.doi.org/10.1016/S0021-8693(02)00618-X}
  {\path{doi:10.1016/S0021-8693(02)00618-X}}.

\bibitem{ajs}
H.H. Andersen$\phantom{a}\!\!\!$, J.C. Jantzen, and W.~Soergel.
\newblock Representations of quantum groups at a {$p$}th root of unity and of
  semisimple groups in characteristic {$p$}: independence of {$p$}.
\newblock {\em Ast\'erisque}, (220):321, 1994.

\bibitem{alz}
H.H. Andersen$\phantom{a}\!\!\!$, G.~Lehrer, and R.~Zhang.
\newblock Cellularity of certain quantum endomorphism algebras.
\newblock {\em Pacific J. Math.}, 279(1-2):11--35, 2015.
\newblock URL: \url{http://arxiv.org/abs/1303.0984}, \href
  {http://dx.doi.org/10.2140/pjm.2015.279.11}
  {\path{doi:10.2140/pjm.2015.279.11}}.

\bibitem{am}
H.H. Andersen$\phantom{a}\!\!\!$ and V.~Mazorchuk.
\newblock Category {$\mathcal{O}$} for quantum groups.
\newblock {\em J. Eur. Math. Soc. (JEMS)}, 17(2):405--431, 2015.
\newblock URL: \url{http://arxiv.org/abs/1105.5500}, \href
  {http://dx.doi.org/10.4171/JEMS/506} {\path{doi:10.4171/JEMS/506}}.

\bibitem{apw}
H.H. Andersen$\phantom{a}\!\!\!$, P.~Polo, and K.X. Wen.
\newblock Representations of quantum algebras.
\newblock {\em Invent. Math.}, 104(1):1--59, 1991.
\newblock \href {http://dx.doi.org/10.1007/BF01245066}
  {\path{doi:10.1007/BF01245066}}.

\bibitem{astproofs}
H.H. Andersen$\phantom{a}\!\!\!$, C.~Stroppel, and D.~Tubbenhauer.
\newblock Additional notes for the paper ``{C}ellular structures using
  $\textbf{U}_q$-tilting modules''.
\newblock 2015.
\newblock URL:
  \url{http://pure.au.dk/portal/files/100562565/cell_tilt_proofs_1.pdf},
  \url{http://www.math.uni-bonn.de/ag/stroppel/cell-tilt-proofs_neu.pdf},
  \url{http://www.math.uni-bonn.de/people/dtubben/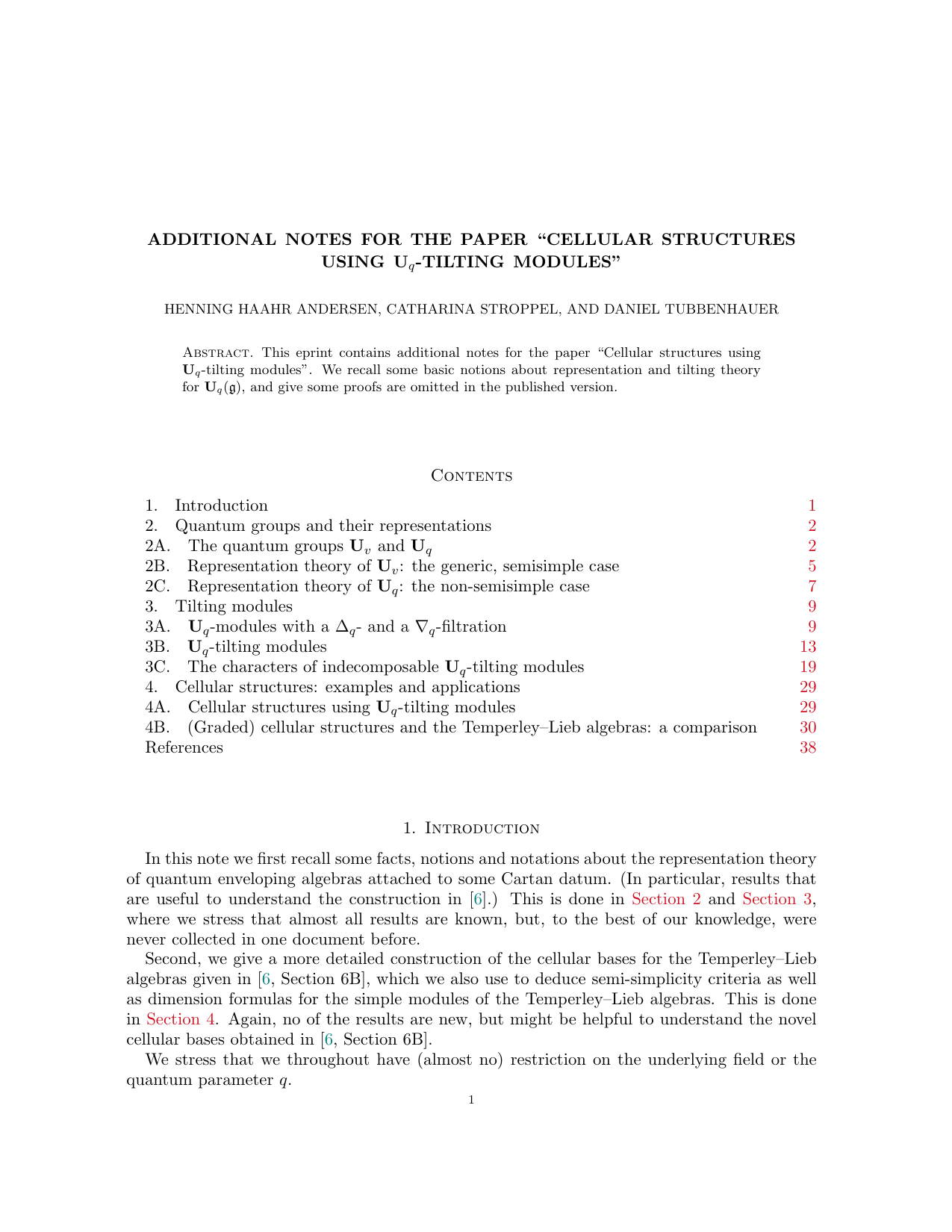}.

\bibitem{ast}
H.H. Andersen$\phantom{a}\!\!\!$, C.~Stroppel, and D.~Tubbenhauer.
\newblock Semisimplicity of {H}ecke and (walled) {B}rauer algebras.
\newblock {\em J. Aust. Math. Soc.}, 103(1):1--44, 2017.
\newblock URL: \url{http://arxiv.org/abs/1507.07676}, \href
  {http://dx.doi.org/10.1017/S1446788716000392}
  {\path{doi:10.1017/S1446788716000392}}.

\bibitem{at}
H.H. Andersen$\phantom{a}\!\!\!$ and D.~Tubbenhauer.
\newblock Diagram categories for $\textbf{U}_q$-tilting modules at roots of
  unity.
\newblock {\em Transform. Groups}, 22(1):29--89, 2017.
\newblock URL: \url{https://arxiv.org/abs/1409.2799}, \href
  {http://dx.doi.org/10.1007/s00031-016-9363-z}
  {\path{doi:10.1007/s00031-016-9363-z}}.

\bibitem{ari}
S.~Ariki.
\newblock Cyclotomic $q$-{S}chur algebras as quotients of quantum algebras.
\newblock {\em J. Reine Angew. Math.}, 513:53--69, 1999.
\newblock \href {http://dx.doi.org/10.1515/crll.1999.063}
  {\path{doi:10.1515/crll.1999.063}}.

\bibitem{ak}
S.~Ariki and K.~Koike.
\newblock A {H}ecke algebra of $(\mathbb{Z}/r\mathbb{Z})\wr \mathfrak{S}_n$ and
  construction of its irreducible representations.
\newblock {\em Adv. Math.}, 106(2):216--243, 1994.
\newblock \href {http://dx.doi.org/10.1006/aima.1994.1057}
  {\path{doi:10.1006/aima.1994.1057}}.

\bibitem{amr}
S.~Ariki, A.~Mathas, and H.~Rui.
\newblock Cyclotomic {N}azarov--{W}enzl algebras.
\newblock {\em Nagoya Math. J.}, 182:47--134, 2006.
\newblock URL: \url{http://arxiv.org/abs/math/0506467}.

\bibitem{bw}
J.S. Birman and H.~Wenzl.
\newblock Braids, link polynomials and a new algebra.
\newblock {\em Trans. Amer. Math. Soc.}, 313(1):249--273, 1989.
\newblock \href {http://dx.doi.org/10.2307/2001074}
  {\path{doi:10.2307/2001074}}.

\bibitem{bra}
R.~Brauer.
\newblock On algebras which are connected with the semisimple continuous
  groups.
\newblock {\em Ann. of Math. (2)}, 38(4):857--872, 1937.
\newblock \href {http://dx.doi.org/10.2307/1968843}
  {\path{doi:10.2307/1968843}}.

\bibitem{bk1}
J.~Brundan and A.~Kleshchev.
\newblock Blocks of cyclotomic {H}ecke algebras and {K}hovanov--{L}auda
  algebras.
\newblock {\em Invent. Math.}, 178(3):451--484, 2009.
\newblock URL: \url{http://arxiv.org/abs/0808.2032}, \href
  {http://dx.doi.org/10.1007/s00222-009-0204-8}
  {\path{doi:10.1007/s00222-009-0204-8}}.

\bibitem{bk}
J.~Brundan$\phantom{a}\!\!\!$ and A.~Kleshchev.
\newblock Schur--{W}eyl duality for higher levels.
\newblock {\em Selecta Math. (N.S.)}, 14(1):1--57, 2008.
\newblock URL: \url{http://arxiv.org/abs/math/0605217}, \href
  {http://dx.doi.org/10.1007/s00029-008-0059-7}
  {\path{doi:10.1007/s00029-008-0059-7}}.

\bibitem{bs5}
J.~Brundan$\phantom{a}\!\!\!$ and C.~Stroppel.
\newblock Gradings on walled {B}rauer algebras and {K}hovanov's arc algebra.
\newblock {\em Adv. Math.}, 231(2):709--773, 2012.
\newblock URL: \url{http://arxiv.org/abs/1107.0999}, \href
  {http://dx.doi.org/10.1016/j.aim.2012.05.016}
  {\path{doi:10.1016/j.aim.2012.05.016}}.

\bibitem{bs1}
J.~Brundan$\phantom{a}\!\!\!$$\phantom{a}\!\!\!$ and C.~Stroppel.
\newblock Highest weight categories arising from {K}hovanov's diagram algebra
  {I}: cellularity.
\newblock {\em Mosc. Math. J.}, 11(4):685--722, 821--822, 2011.
\newblock URL: \url{http://arxiv.org/abs/0806.1532}.

\bibitem{bs2}
J.~Brundan$\phantom{a}\!\!\!$$\phantom{a}\!\!\!$ and
  C.~Stroppel$\phantom{a}\!\!\!$.
\newblock Highest weight categories arising from {K}hovanov's diagram algebra
  {II}: {K}oszulity.
\newblock {\em Transform. Groups}, 15(1):1--45, 2010.
\newblock URL: \url{http://arxiv.org/abs/0806.3472}, \href
  {http://dx.doi.org/10.1007/s00031-010-9079-4}
  {\path{doi:10.1007/s00031-010-9079-4}}.

\bibitem{bs3}
J.~Brundan$\phantom{a}\!\!\!$$\phantom{a}\!\!\!$ and
  C.~Stroppel$\phantom{a}\!\!\!$.
\newblock Highest weight categories arising from {K}hovanov's diagram algebra
  {III}: category $\mathcal{O}$.
\newblock {\em Represent. Theory}, 15:170--243, 2011.
\newblock URL: \url{http://arxiv.org/abs/0812.1090}, \href
  {http://dx.doi.org/10.1090/S1088-4165-2011-00389-7}
  {\path{doi:10.1090/S1088-4165-2011-00389-7}}.

\bibitem{bs4}
J.~Brundan$\phantom{a}\!\!\!$$\phantom{a}\!\!\!$ and
  C.~Stroppel$\phantom{a}\!\!\!$.
\newblock Highest weight categories arising from {K}hovanov's diagram algebra
  {IV}: the general linear supergroup.
\newblock {\em J. Eur. Math. Soc. (JEMS)}, 14(2):373--419, 2012.
\newblock URL: \url{http://arxiv.org/abs/0907.2543}, \href
  {http://dx.doi.org/10.4171/JEMS/306} {\path{doi:10.4171/JEMS/306}}.

\bibitem{ckm}
S.~Cautis, J.~Kamnitzer, and S.~Morrison.
\newblock Webs and quantum skew {H}owe duality.
\newblock {\em Math. Ann.}, 360(1-2):351--390, 2014.
\newblock URL: \url{http://arxiv.org/abs/1210.6437}, \href
  {http://dx.doi.org/10.1007/s00208-013-0984-4}
  {\path{doi:10.1007/s00208-013-0984-4}}.

\bibitem{cps}
E.~Cline, B.~Parshall, and L.~Scott.
\newblock Finite-dimensional algebras and highest weight categories.
\newblock {\em J. Reine Angew. Math.}, 391:85--99, 1988.

\bibitem{ch}
B.~Cooper and M.~Hogancamp.
\newblock An exceptional collection for {K}hovanov homology.
\newblock {\em Algebr. Geom. Topol.}, 15(5):2659--2707, 2015.
\newblock URL: \url{http://arxiv.org/abs/1209.1002}, \href
  {http://dx.doi.org/10.2140/agt.2015.15.2659}
  {\path{doi:10.2140/agt.2015.15.2659}}.

\bibitem{ddh}
R.~Dipper, S.~Doty, and J.~Hu.
\newblock Brauer algebras, symplectic {S}chur algebras and {S}chur--{W}eyl
  duality.
\newblock {\em Trans. Amer. Math. Soc.}, 360(1):189--213 (electronic), 2008.
\newblock URL: \url{http://arxiv.org/abs/math/0503545}, \href
  {http://dx.doi.org/10.1090/S0002-9947-07-04179-7}
  {\path{doi:10.1090/S0002-9947-07-04179-7}}.

\bibitem{dds}
R.~Dipper, S.~Doty, and F.~Stoll.
\newblock The quantized walled {B}rauer algebra and mixed tensor space.
\newblock {\em Algebr. Represent. Theory}, 17(2):675--701, 2014.
\newblock URL: \url{http://arxiv.org/abs/0806.0264}, \href
  {http://dx.doi.org/10.1007/s10468-013-9414-2}
  {\path{doi:10.1007/s10468-013-9414-2}}.

\bibitem{djm}
R.~Dipper, G.~James, and A.~Mathas.
\newblock Cyclotomic $q$-{S}chur algebras.
\newblock {\em Math. Z.}, 229(3):385--416, 1998.
\newblock \href {http://dx.doi.org/10.1007/PL00004665}
  {\path{doi:10.1007/PL00004665}}.

\bibitem{don}
S.~Donkin.
\newblock On tilting modules for algebraic groups.
\newblock {\em Math. Z.}, 212(1):39--60, 1993.
\newblock \href {http://dx.doi.org/10.1007/BF02571640}
  {\path{doi:10.1007/BF02571640}}.

\bibitem{don1}
S.~Donkin.
\newblock {\em The $q$-{S}chur algebra}, volume 253 of {\em London Mathematical
  Society Lecture Note Series}.
\newblock Cambridge University Press, Cambridge, 1998.
\newblock \href {http://dx.doi.org/10.1017/CBO9780511600708}
  {\path{doi:10.1017/CBO9780511600708}}.

\bibitem{dh}
S.~Doty and J.~Hu.
\newblock Schur--{W}eyl duality for orthogonal groups.
\newblock {\em Proc. Lond. Math. Soc. (3)}, 98(3):679--713, 2009.
\newblock URL: \url{http://arxiv.org/abs/0712.0944}, \href
  {http://dx.doi.org/10.1112/plms/pdn044} {\path{doi:10.1112/plms/pdn044}}.

\bibitem{dps}
J.~Du, B.~Parshall, and L.~Scott.
\newblock Quantum {W}eyl reciprocity and tilting modules.
\newblock {\em Comm. Math. Phys.}, 195(2):321--352, 1998.
\newblock \href {http://dx.doi.org/10.1007/s002200050392}
  {\path{doi:10.1007/s002200050392}}.

\bibitem{es}
M.~Ehrig and C.~Stroppel.
\newblock Nazarov--{W}enzl algebras, coideal subalgebras and categorified skew
  {H}owe duality.
\newblock 2013.
\newblock URL: \url{http://arxiv.org/abs/1310.1972}.

\bibitem{ehst}
M.~Ehrig and C.~Stroppel.
\newblock Koszul {G}radings on {B}rauer {A}lgebras.
\newblock {\em Int. Math. Res. Not. IMRN}, (13):3970--4011, 2016.
\newblock URL: \url{http://arxiv.org/abs/1504.03924}, \href
  {http://dx.doi.org/10.1093/imrn/rnv267} {\path{doi:10.1093/imrn/rnv267}}.

\bibitem{es1}
M.~Ehrig and C.~Stroppel.
\newblock Schur--{W}eyl duality for the {B}rauer algebra and the
  ortho-symplectic {L}ie superalgebra.
\newblock {\em Math. Z.}, 284(1-2):595--613, 2016.
\newblock URL: \url{http://arxiv.org/abs/1412.7853}, \href
  {http://dx.doi.org/10.1007/s00209-016-1669-y}
  {\path{doi:10.1007/s00209-016-1669-y}}.

\bibitem{elias}
B.~Elias.
\newblock Light ladders and clasp conjectures.
\newblock 2015.
\newblock URL: \url{http://arxiv.org/abs/1510.06840}.

\bibitem{erd}
K.~Erdmann.
\newblock Tensor products and dimensions of simple modules for symmetric
  groups.
\newblock {\em Manuscripta Math.}, 88(3):357--386, 1995.
\newblock \href {http://dx.doi.org/10.1007/BF02567828}
  {\path{doi:10.1007/BF02567828}}.

\bibitem{gl}
J.J. Graham and G.~Lehrer.
\newblock Cellular algebras.
\newblock {\em Invent. Math.}, 123(1):1--34, 1996.
\newblock \href {http://dx.doi.org/10.1007/BF01232365}
  {\path{doi:10.1007/BF01232365}}.

\bibitem{hr}
T.~Halverson and A.~Ram.
\newblock $q$-rook monoid algebras, {H}ecke algebras, and {S}chur--{W}eyl
  duality.
\newblock {\em Zap. Nauchn. Sem. S.-Peterburg. Otdel. Mat. Inst. Steklov.
  (POMI)}, 283(Teor. Predst. Din. Sist. Komb. i Algoritm. Metody. 6):224--250,
  262--263, 2001.
\newblock URL: \url{http://arxiv.org/abs/math/0401330}, \href
  {http://dx.doi.org/10.1023/B:JOTH.0000024623.99412.13}
  {\path{doi:10.1023/B:JOTH.0000024623.99412.13}}.

\bibitem{hu}
J.~Hu.
\newblock B{MW} algebra, quantized coordinate algebra and type {$C$}
  {S}chur--{W}eyl duality.
\newblock {\em Represent. Theory}, 15:1--62, 2011.
\newblock URL: \url{http://arxiv.org/abs/0708.3009}, \href
  {http://dx.doi.org/10.1090/S1088-4165-2011-00369-1}
  {\path{doi:10.1090/S1088-4165-2011-00369-1}}.

\bibitem{hm}
J.~Hu and A.~Mathas.
\newblock Graded cellular bases for the cyclotomic
  {K}hovanov--{L}auda--{R}ouquier algebras of type ${A}$.
\newblock {\em Adv. Math.}, 225(2):598--642, 2010.
\newblock URL: \url{http://arxiv.org/abs/0907.2985}, \href
  {http://dx.doi.org/10.1016/j.aim.2010.03.002}
  {\path{doi:10.1016/j.aim.2010.03.002}}.

\bibitem{hm2}
J.~Hu and A.~Mathas.
\newblock Quiver {S}chur algebras for linear quivers.
\newblock {\em Proc. Lond. Math. Soc. (3)}, 110(6):1315--1386, 2015.
\newblock URL: \url{http://arxiv.org/abs/1110.1699}, \href
  {http://dx.doi.org/10.1112/plms/pdv007} {\path{doi:10.1112/plms/pdv007}}.

\bibitem{hs}
J.~Hu and F.~Stoll.
\newblock On double centralizer properties between quantum groups and
  {A}riki--{K}oike algebras.
\newblock {\em J. Algebra}, 275(1):397--418, 2004.
\newblock \href {http://dx.doi.org/10.1016/j.jalgebra.2003.10.026}
  {\path{doi:10.1016/j.jalgebra.2003.10.026}}.

\bibitem{hk1}
R.S. Huerfano and M.~Khovanov.
\newblock A category for the adjoint representation.
\newblock {\em J. Algebra}, 246(2):514--542, 2001.
\newblock URL: \url{https://arxiv.org/abs/math/0002060}, \href
  {http://dx.doi.org/10.1006/jabr.2001.8962}
  {\path{doi:10.1006/jabr.2001.8962}}.

\bibitem{hum}
J.E. Humphreys.
\newblock {\em Representations of semisimple {L}ie algebras in the {BGG}
  category {$\mathcal{O}$}}, volume~94 of {\em Graduate Studies in
  Mathematics}.
\newblock American Mathematical Society, Providence, RI, 2008.
\newblock \href {http://dx.doi.org/10.1090/gsm/094}
  {\path{doi:10.1090/gsm/094}}.

\bibitem{ja1}
J.C. Jantzen.
\newblock Darstellungen halbeinfacher algebraischer {G}ruppen und zugeordnete
  kontravariante {F}ormen.
\newblock {\em Bonn. Math. Schr.}, (67):v+124, 1973.

\bibitem{ja}
J.C. Jantzen.
\newblock {\em Lectures on quantum groups}, volume~6 of {\em Graduate Studies
  in Mathematics}.
\newblock American Mathematical Society, Providence, RI, 1996.

\bibitem{jarag}
J.C. Jantzen.
\newblock {\em Representations of algebraic groups}, volume 107 of {\em
  Mathematical Surveys and Monographs}.
\newblock American Mathematical Society, Providence, RI, second edition, 2003.

\bibitem{jones}
V.F.R. Jones.
\newblock Index for subfactors.
\newblock {\em Invent. Math.}, 72(1):1--25, 1983.
\newblock \href {http://dx.doi.org/10.1007/BF01389127}
  {\path{doi:10.1007/BF01389127}}.

\bibitem{kalu}
D.~Kazhdan and G.~Lusztig.
\newblock Tensor structures arising from affine {L}ie algebras. {I}, {II},
  {III}, {IV}.
\newblock {\em J. Amer. Math. Soc.}, 6,7(4,2):905--947, 949--1011, 335--381,
  383--453, 1993,1994.

\bibitem{kh4}
M.~Khovanov.
\newblock A functor-valued invariant of tangles.
\newblock {\em Algebr. Geom. Topol.}, 2:665--741, 2002.
\newblock URL: \url{http://arxiv.org/abs/math/0103190}, \href
  {http://dx.doi.org/10.2140/agt.2002.2.665}
  {\path{doi:10.2140/agt.2002.2.665}}.

\bibitem{kl1}
M.~Khovanov$\phantom{a}\!\!\!$ and A.D. Lauda.
\newblock A diagrammatic approach to categorification of quantum groups {I}.
\newblock {\em Represent. Theory}, 13:309--347, 2009.
\newblock URL: \url{http://arxiv.org/abs/0803.4121}, \href
  {http://dx.doi.org/10.1090/S1088-4165-09-00346-X}
  {\path{doi:10.1090/S1088-4165-09-00346-X}}.

\bibitem{kl3}
M.~Khovanov$\phantom{a}\!\!\!$ and A.D. Lauda.
\newblock A diagrammatic approach to categorification of quantum groups {II}.
\newblock {\em Trans. Amer. Math. Soc.}, 363(5):2685--2700, 2011.
\newblock URL: \url{http://arxiv.org/abs/0804.2080}, \href
  {http://dx.doi.org/10.1090/S0002-9947-2010-05210-9}
  {\path{doi:10.1090/S0002-9947-2010-05210-9}}.

\bibitem{koi}
K.~Koike.
\newblock On the decomposition of tensor products of the representations of the
  classical groups: by means of the universal characters.
\newblock {\em Adv. Math.}, 74(1):57--86, 1989.
\newblock \href {http://dx.doi.org/10.1016/0001-8708(89)90004-2}
  {\path{doi:10.1016/0001-8708(89)90004-2}}.

\bibitem{kx}
S.~K{\"o}nig and C.~Xi.
\newblock On the structure of cellular algebras.
\newblock In {\em Algebras and modules, {II} ({G}eiranger, 1996)}, volume~24 of
  {\em CMS Conf. Proc.}, pages 365--386. Amer. Math. Soc., Providence, RI,
  1998.

\bibitem{kup}
G.~Kuperberg.
\newblock Spiders for rank {$2$} {L}ie algebras.
\newblock {\em Comm. Math. Phys.}, 180(1):109--151, 1996.
\newblock URL: \url{http://arxiv.org/abs/q-alg/9712003}.

\bibitem{llt}
A.~Lascoux, B.~Leclerc, and J.-Y. Thibon.
\newblock Hecke algebras at roots of unity and crystal bases of quantum affine
  algebras.
\newblock {\em Comm. Math. Phys.}, 181(1):205--263, 1996.

\bibitem{lz2}
G.~Lehrer and R.~Zhang.
\newblock The second fundamental theorem of invariant theory for the orthogonal
  group.
\newblock {\em Ann. of Math. (2)}, 176(3):2031--2054, 2012.
\newblock URL: \url{http://arxiv.org/abs/1102.3221}, \href
  {http://dx.doi.org/10.4007/annals.2012.176.3.12}
  {\path{doi:10.4007/annals.2012.176.3.12}}.

\bibitem{li}
G.~Li.
\newblock A {KLR} grading of the {B}rauer algebras.
\newblock 2014.
\newblock URL: \url{http://arxiv.org/abs/1409.1195}.

\bibitem{lu1}
G.~Lusztig$\phantom{a}\!\!\!$.
\newblock Modular representations and quantum groups.
\newblock In {\em Classical groups and related topics ({B}eijing, 1987)},
  volume~82 of {\em Contemp. Math.}, pages 59--77. Amer. Math. Soc.,
  Providence, RI, 1989.
\newblock \href {http://dx.doi.org/10.1090/conm/082/982278}
  {\path{doi:10.1090/conm/082/982278}}.

\bibitem{mack1}
M.~Mackaay.
\newblock The {$\mathfrak{sl}_n$}-web algebras and dual canonical bases.
\newblock {\em J. Algebra}, 409:54--100, 2014.
\newblock URL: \url{http://arxiv.org/abs/1308.0566}, \href
  {http://dx.doi.org/10.1016/j.jalgebra.2014.02.036}
  {\path{doi:10.1016/j.jalgebra.2014.02.036}}.

\bibitem{mpt}
M.~Mackaay, W.~Pan, and D.~Tubbenhauer.
\newblock The {$\mathfrak{sl}_3$}-web algebra.
\newblock {\em Math. Z.}, 277(1-2):401--479, 2014.
\newblock URL: \url{http://arxiv.org/abs/1206.2118}, \href
  {http://dx.doi.org/10.1007/s00209-013-1262-6}
  {\path{doi:10.1007/s00209-013-1262-6}}.

\bibitem{martin}
P.~Martin.
\newblock {\em Potts models and related problems in statistical mechanics},
  volume~5 of {\em Series on Advances in Statistical Mechanics}.
\newblock World Scientific Publishing Co., Inc., Teaneck, NJ, 1991.
\newblock \href {http://dx.doi.org/10.1142/0983} {\path{doi:10.1142/0983}}.

\bibitem{ms}
P.~Martin and H.~Saleur.
\newblock The blob algebra and the periodic {T}emperley-{L}ieb algebra.
\newblock {\em Lett. Math. Phys.}, 30(3):189--206, 1994.
\newblock URL: \url{http://arxiv.org/abs/hep-th/9302094}, \href
  {http://dx.doi.org/10.1007/BF00805852} {\path{doi:10.1007/BF00805852}}.

\bibitem{mas}
V.~Mazorchuk and C.~Stroppel.
\newblock {$G(\ell,k,d)$}-modules via groupoids.
\newblock {\em J. Algebraic Combin.}, 43(1):11--32, 2016.
\newblock URL: \url{http://arxiv.org/abs/1412.4494}, \href
  {http://dx.doi.org/10.1007/s10801-015-0623-0}
  {\path{doi:10.1007/s10801-015-0623-0}}.

\bibitem{mu}
J.~Murakami.
\newblock The {K}auffman polynomial of links and representation theory.
\newblock {\em Osaka J. Math.}, 24(4):745--758, 1987.

\bibitem{pag}
R.~Paget.
\newblock Representation theory of {$q$}-rook monoid algebras.
\newblock {\em J. Algebraic Combin.}, 24(3):239--252, 2006.
\newblock \href {http://dx.doi.org/10.1007/s10801-006-0010-y}
  {\path{doi:10.1007/s10801-006-0010-y}}.

\bibitem{par}
J.~Paradowski.
\newblock Filtrations of modules over the quantum algebra.
\newblock In {\em Algebraic groups and their generalizations: quantum and
  infinite-dimensional methods ({U}niversity {P}ark, {PA}, 1991)}, volume~56 of
  {\em Proc. Sympos. Pure Math.}, pages 93--108. Amer. Math. Soc., Providence,
  RI, 1994.

\bibitem{prh}
D.~Plaza and S.~Ryom-Hansen.
\newblock Graded cellular bases for {T}emperley--{L}ieb algebras of type {$A$}
  and {$B$}.
\newblock {\em J. Algebraic Combin.}, 40(1):137--177, 2014.
\newblock URL: \url{http://arxiv.org/abs/1203.2592}, \href
  {http://dx.doi.org/10.1007/s10801-013-0481-6}
  {\path{doi:10.1007/s10801-013-0481-6}}.

\bibitem{rw}
S.~Riche and G.~Williamson.
\newblock Tilting modules and the {$p$}-canonical basis.
\newblock 2015.
\newblock
  \url{https://hal-clermont-univ.archives-ouvertes.fr/hal-01249796/document}.
\newblock URL: \url{http://arxiv.org/abs/1512.08296}.

\bibitem{rsa}
D.~Ridout and Y.~Saint-Aubin.
\newblock Standard modules, induction and the structure of the
  {T}emperley--{L}ieb algebra.
\newblock {\em Adv. Theor. Math. Phys.}, 18(5):957--1041, 2014.
\newblock URL: \url{http://arxiv.org/abs/1204.4505}.

\bibitem{ring}
C.M. Ringel.
\newblock The category of modules with good filtrations over a quasi-hereditary
  algebra has almost split sequences.
\newblock {\em Math. Z.}, 208(2):209--223, 1991.
\newblock \href {http://dx.doi.org/10.1007/BF02571521}
  {\path{doi:10.1007/BF02571521}}.

\bibitem{rt}
D.E.V. Rose and D.~Tubbenhauer.
\newblock Symmetric webs, {J}ones--{W}enzl recursions, and {$q$}-{H}owe
  duality.
\newblock {\em Int. Math. Res. Not. IMRN}, (17):5249--5290, 2016.
\newblock URL: \url{http://arxiv.org/abs/1501.00915}, \href
  {http://dx.doi.org/10.1093/imrn/rnv302} {\path{doi:10.1093/imrn/rnv302}}.

\bibitem{rou}
R.~Rouquier.
\newblock {$2$}-{K}ac--{M}oody algebras.
\newblock 2008.
\newblock URL: \url{http://arxiv.org/abs/0812.5023}.

\bibitem{rsvv}
R.~Rouquier, P.~Shan, M.~Varagnolo, and E.~Vasserot.
\newblock Categorifications and cyclotomic rational double affine {H}ecke
  algebras.
\newblock {\em Invent. Math.}, 204(3):671--786, 2016.
\newblock URL: \url{http://arxiv.org/abs/1305.4456}, \href
  {http://dx.doi.org/10.1007/s00222-015-0623-7}
  {\path{doi:10.1007/s00222-015-0623-7}}.

\bibitem{rh3}
S.~Ryom-Hansen.
\newblock Cell structures on the blob algebra.
\newblock {\em Represent. Theory}, 16:540--567, 2012.
\newblock URL: \url{http://arxiv.org/abs/0911.1923}, \href
  {http://dx.doi.org/10.1090/S1088-4165-2012-00424-1}
  {\path{doi:10.1090/S1088-4165-2012-00424-1}}.

\bibitem{rh}
S.~Ryom-Hansen$\phantom{a}\!\!\!$.
\newblock The {A}riki--{T}erasoma--{Y}amada tensor space and the blob algebra.
\newblock {\em J. Algebra}, 324(10):2658--2675, 2010.
\newblock URL: \url{http://arxiv.org/abs/math/0505278}, \href
  {http://dx.doi.org/10.1016/j.jalgebra.2010.08.018}
  {\path{doi:10.1016/j.jalgebra.2010.08.018}}.

\bibitem{sash}
M.~Sakamoto and T.~Shoji.
\newblock Schur--{W}eyl reciprocity for {A}riki--{K}oike algebras.
\newblock {\em J. Algebra}, 221(1):293--314, 1999.
\newblock \href {http://dx.doi.org/10.1006/jabr.1999.7973}
  {\path{doi:10.1006/jabr.1999.7973}}.

\bibitem{sa}
A.~Sartori.
\newblock The degenerate affine walled {B}rauer algebra.
\newblock {\em J. Algebra}, 417:198--233, 2014.
\newblock URL: \url{http://arxiv.org/abs/1305.2347}, \href
  {http://dx.doi.org/10.1016/j.jalgebra.2014.06.030}
  {\path{doi:10.1016/j.jalgebra.2014.06.030}}.

\bibitem{sast}
A.~Sartori and C.~Stroppel.
\newblock Walled {B}rauer algebras as idempotent truncations of level {$2$}
  cyclotomic quotients.
\newblock {\em J. Algebra}, 440:602--638, 2015.
\newblock URL: \url{http://arxiv.org/abs/1411.2771}, \href
  {http://dx.doi.org/10.1016/j.jalgebra.2015.06.018}
  {\path{doi:10.1016/j.jalgebra.2015.06.018}}.

\bibitem{soe4}
W.~Soergel.
\newblock Character formulas for tilting modules over {K}ac--{M}oody algebras.
\newblock {\em Represent. Theory}, 2:432--448 (electronic), 1998.
\newblock \href {http://dx.doi.org/10.1090/S1088-4165-98-00057-0}
  {\path{doi:10.1090/S1088-4165-98-00057-0}}.

\bibitem{soe5}
W.~Soergel.
\newblock Character formulas for tilting modules over quantum groups at roots
  of one.
\newblock In {\em Current developments in mathematics, 1997 ({C}ambridge,
  {MA})}, pages 161--172. Int. Press, Boston, MA, 1999.

\bibitem{soe1}
W.~Soergel$\phantom{a}\!\!\!$.
\newblock Kategorie {$\mathcal{O}$}, perverse {G}arben und {M}oduln \"uber den
  {K}oinvarianten zur {W}eylgruppe.
\newblock {\em J. Amer. Math. Soc.}, 3(2):421--445, 1990.
\newblock \href {http://dx.doi.org/10.2307/1990960}
  {\path{doi:10.2307/1990960}}.

\bibitem{soe3}
W.~Soergel$\phantom{a}\!\!\!$.
\newblock Kazhdan--{L}usztig polynomials and a combinatoric[s] for tilting
  modules.
\newblock {\em Represent. Theory}, 1:83--114 (electronic), 1997.
\newblock \href {http://dx.doi.org/10.1090/S1088-4165-97-00021-6}
  {\path{doi:10.1090/S1088-4165-97-00021-6}}.

\bibitem{sol}
L.~Solomon.
\newblock The {B}ruhat decomposition, {T}its system and {I}wahori ring for the
  monoid of matrices over a finite field.
\newblock {\em Geom. Dedicata}, 36(1):15--49, 1990.
\newblock \href {http://dx.doi.org/10.1007/BF00181463}
  {\path{doi:10.1007/BF00181463}}.

\bibitem{st2}
C.~Stroppel.
\newblock Category {$\mathcal{O}$}: gradings and translation functors.
\newblock {\em J. Algebra}, 268(1):301--326, 2003.
\newblock \href {http://dx.doi.org/10.1016/S0021-8693(03)00308-9}
  {\path{doi:10.1016/S0021-8693(03)00308-9}}.

\bibitem{tani}
T.~Tanisaki.
\newblock Character formulas of {K}azhdan--{L}usztig type.
\newblock In {\em Representations of finite dimensional algebras and related
  topics in {L}ie theory and geometry}, volume~40 of {\em Fields Inst.
  Commun.}, pages 261--276. Amer. Math. Soc., Providence, RI, 2004.

\bibitem{tl}
H.N.V. Temperley and E.H. Lieb.
\newblock Relations between the ``percolation'' and ``colouring'' problem and
  other graph-theoretical problems associated with regular planar lattices:
  some exact results for the ``percolation'' problem.
\newblock {\em Proc. Roy. Soc. London Ser. A}, 322(1549):251--280, 1971.

\bibitem{tub3}
D.~Tubbenhauer.
\newblock {$\mathfrak{sl}_3$}-web bases, intermediate crystal bases and
  categorification.
\newblock {\em J. Algebraic Combin.}, 40(4):1001--1076, 2014.
\newblock URL: \url{https://arxiv.org/abs/1310.2779}, \href
  {http://dx.doi.org/10.1007/s10801-014-0518-5}
  {\path{doi:10.1007/s10801-014-0518-5}}.

\bibitem{tub4}
D.~Tubbenhauer$\phantom{a}\!\!\!$.
\newblock {$\mathfrak{sl}_n$}-webs, categorification and {K}hovanov--{R}ozansky
  homologies.
\newblock 2014.
\newblock URL: \url{http://arxiv.org/abs/1404.5752}.

\bibitem{tur1}
V.G. Turaev.
\newblock Operator invariants of tangles, and {$R$}-matrices.
\newblock {\em Izv. Akad. Nauk SSSR Ser. Mat.}, 53(5):1073--1107, 1135, 1989.
\newblock Translation in Math. USSR-Izv. 35:2 (1990), 411-444.

\bibitem{tur}
V.G. Turaev.
\newblock {\em Quantum invariants of knots and {$3$}-manifolds}, volume~18 of
  {\em de Gruyter Studies in Mathematics}.
\newblock Walter de Gruyter \& Co., Berlin, revised edition, 2010.
\newblock \href {http://dx.doi.org/10.1515/9783110221848}
  {\path{doi:10.1515/9783110221848}}.

\bibitem{wenzl}
H.~Wenzl.
\newblock On sequences of projections.
\newblock {\em C. R. Math. Rep. Acad. Sci. Canada}, 9(1):5--9, 1987.

\bibitem{west1}
B.W. Westbury.
\newblock Invariant tensors and cellular categories.
\newblock {\em J. Algebra}, 321(11):3563--3567, 2009.
\newblock URL: \url{http://arxiv.org/abs/0806.4045}, \href
  {http://dx.doi.org/10.1016/j.jalgebra.2008.07.004}
  {\path{doi:10.1016/j.jalgebra.2008.07.004}}.

\bibitem{west}
B.W. Westbury$\phantom{a}\!\!\!$.
\newblock The representation theory of the {T}emperley--{L}ieb algebras.
\newblock {\em Math. Z.}, 219(4):539--565, 1995.
\newblock \href {http://dx.doi.org/10.1007/BF02572380}
  {\path{doi:10.1007/BF02572380}}.

\end{thebibliography}
\bigskip

\noindent H.H.A.: \texttt{h.haahr.andersen@gmail.com}
\smallskip

\noindent C.S.: \texttt{stroppel@math.uni-bonn.de}
\smallskip

\noindent
D.T.: \texttt{dtubben@math.uni-bonn.de}

\includepdf[pages={-}]{cell-tilt-proofs.pdf}
\end{document}